\newcommand*{\rom}[1]{\expandafter\@slowromancap\romannumeral #1@}
\newtheorem{theorem}{Theorem}[section]
\newtheorem{lemma}[theorem]{Lemma}
\newtheorem{corollary}[theorem]{Corollary}
\newtheorem{definition}{Definition}[section]
\newtheorem{proposition}[theorem]{Proposition}
\newtheorem{conjecture}{Conjecture}[section]
\begin{document}

\title{A Control Theorem for Primitive ideals\linebreak in Iwasawa algebras}

\author{Adam Jones}

\date{\today}

\maketitle

\begin{abstract}

\noindent Let $p$ be a prime, $K\backslash\mathbb{Q}_p$ a finite extension, $G$ a nilpotent, uniform pro-$p$ group. We prove that all faithful, primitive ideals in the Iwasawa algebra $KG$ are controlled by $C_G(Z_2(G))$, the centraliser of the second term in the upper central series for $G$.

\end{abstract}

\tableofcontents

\setcounter{section}{0}

\section{Introduction}

\subsection{Background}

Fix $p$ a prime, $K\backslash\mathbb{Q}_p$ a finite extension, $\mathcal{O}$ the ring of integers of $K$, $\pi$ a uniformiser for $\mathcal{O}$, and let $G$ be a uniform pro-$p$ group in the sense of \cite[Definition 4.1]{DDMS}.\\ 

\noindent Define the \emph{Iwasawa algebra} of $G$ with coefficients in $\mathcal{O}$ to be the $\mathcal{O}$-algebra defined by:

\begin{equation}
\mathcal{O}G:=\underset{N\trianglelefteq_o G}{\varprojlim} \mathcal{O}[G/N].
\end{equation}

\noindent An ongoing project is to investigate the ideal structure of $\mathcal{O}G$, and provide an answer to the questions posed in \cite[Section 7]{survey}. Much progress towards a complete classification of prime ideals was made in \cite{nilpotent} and \cite{APB}, but these results only apply in a characteristic $p$ setting, i.e. they apply only to prime ideals of $\mathcal{O}G$ which contain $p$. This paper will focus instead on the characteristic 0 case.\\

\noindent Define $KG:=\mathcal{O}G\otimes_{\mathcal{O}}K$, then using \cite[Lemma 21.1]{Schneider}, this can be realised as the algebra of continuous, $K$-valued distributions on $G$, i.e as a dense subalgebra of the distribution algebra $D(G,K)$ of $K$-analytic distributions on $G$ (see \cite{ST} for details). For this reason, we call $KG$ the \emph{Iwasawa algebra of continuous $K$-distributions} of $G$.

Note that $\mathcal{O}G$ is an $\mathcal{O}$-lattice subalgebra of $KG$ in the sense of \cite[Definition 2.7]{annals}, and that prime ideals of $KG$ are in bijection with prime ideals of $\mathcal{O}G$ that do not contain $\pi$. So our aim is to classify prime ideals in $KG$.\\

\noindent In the case where $G=\mathbb{G}(\mathcal{O})$ for some simply connected, split-semisimple, affine algebraic group $\mathbb{G}$, it is strongly believed that the only non-zero prime ideals of $KG$ have finite codimension (see \cite{munster} for more details). However, in this paper, we will be interested in the case when $G$ is solvable, where there is a much richer prime ideal structure.\\

\noindent The general conjecture is that prime ideals in $KG$ are all of a very predictable form, which is the essence of \cite[Question G]{survey}. This is made precise by the following definition:

\begin{definition}\label{standard}

Let $P$ be a prime ideal of $KG$. We say that $P$ is \emph{standard} if there exists a closed, normal subgroup $H$ of $G$ such that:\\

$i$. $H-1\subseteq P$.\\

$ii$. Setting $G':=\frac{G}{H}$, the ideal $\widetilde{P}:=P/(H-1)KG$ of $KG/(H-1)KG\cong KG'$ is centrally 

generated.

\end{definition}

\noindent Standard prime ideals have very nice properties, in particular they are generally \emph{completely prime}, i.e. if $P$ is standard then $KG/P$ is a domain.

\begin{conjecture}\label{reduction}

Let $G$ be a solvable group and let $P$ be a prime ideal of $KG$. Then $P$ is standard.

\end{conjecture}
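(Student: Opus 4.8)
The plan is to approach Conjecture \ref{reduction} by induction on $\dim G$, with the Control Theorem proved in this paper as the engine of the inductive step; this should settle the nilpotent case, while the general solvable case needs genuinely new input (noted at the end).

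The first step is a reduction to faithful primes. The collection of closed normal subgroups $H\trianglelefteq_c G$ with $H-1\subseteq P$ is closed under joins --- if $H_1-1,H_2-1\subseteq P$ then $\langle H_1,H_2\rangle-1\subseteq P$, using that the $(H_i-1)KG$ are two-sided and $P$ is prime --- so it has a unique maximal element $H_P$. Then $\widetilde P:=P/(H_P-1)KG$ is a faithful prime of $K(G/H_P)$, and $P$ is standard exactly when $\widetilde P$ is centrally generated (a nontrivial controller for $\widetilde P$ would contradict maximality of $H_P$). So it suffices to prove: if $G$ is nilpotent uniform and $P$ is a faithful prime of $KG$, then $P$ is centrally generated.

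For the induction, the case $G$ abelian is trivial since $KG$ is then commutative. If $G$ is non-abelian the upper central series does not stabilise at $Z_1(G)$, so $Z_2(G)\not\subseteq Z(G)$; hence $H_0:=C_G(Z_2(G))$ is a proper closed subgroup, and it is normal in $G$ because $Z_2(G)\trianglelefteq G$. Moreover $\dim H_0<\dim G$: if $[G:H_0]<\infty$, then for any $g\in G$ the map $z\mapsto [g,z]\,z$ on $Z_2(G)$ --- well-defined since $[g,z]\in[G,Z_2(G)]\subseteq Z(G)$ --- has finite order, and iterating it gives $[g,z]^{[G:H_0]}=1$ for all $z\in Z_2(G)$; as $Z(G)$ is torsion-free this forces $[g,z]=1$, i.e.\ $g\in H_0$, contradicting properness. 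Now let $P$ be a faithful prime of $KG$. Upgrading the Control Theorem from primitive to prime ideals --- via the vertex of $P$, or by writing $P$ as an intersection of primitives each controlled by $H_0$ --- yields $P=(P\cap KH_0)KG$. Put $Q:=P\cap KH_0$, a prime ideal of $KH_0$. Replacing $H_0$ by a $G$-invariant open uniform subgroup if necessary, and using that standardness is insensitive to passing to open subgroups, the inductive hypothesis shows $Q$ is standard over $KH_0$.

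What remains is the descent, which I expect to be the main obstacle. Let $L_0\trianglelefteq H_0$ be maximal with $L_0-1\subseteq Q$; since $P$ is two-sided and $H_0\trianglelefteq G$, the group $G$ acts on $KH_0$ fixing $Q$, hence permutes the subgroups $M$ with $M-1\subseteq Q$, and so fixes $L_0$: thus $L_0\trianglelefteq G$. By the inductive step, $\bar Q:=Q/(L_0-1)KH_0$ is centrally generated in $K(H_0/L_0)$. Set $\bar P:=P/(L_0-1)KG$, a faithful prime of $K(G/L_0)$; control passes to the quotient, so $\bar P=\bar Q\,K(G/L_0)=\bigl(\bar Q\cap Z(K(H_0/L_0))\bigr)K(G/L_0)$, and $\bar P$ is therefore generated by the set $S:=\bar Q\cap Z(K(H_0/L_0))$. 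The crux is to show $S\subseteq Z(K(G/L_0))$, equivalently that $S$ is fixed by $G/H_0$. This is precisely where the choice $H_0=C_G(Z_2(G))$ --- rather than a smaller subgroup --- is forced: the central elements in play are built from the image of $Z_2(G)$, on which $G$ acts trivially modulo $Z(G)$ by construction, and one must combine this with a precise description of what $Z(K(H_0/L_0))$ contains to deduce $G$-invariance. For general solvable $G$ the scheme breaks down: there is no evident analogue of $C_G(Z_2(G))$, and passing from prime ideals of the Iwasawa algebra of the Fitting subgroup to those of $KG$ would require Roseblade-type orbit and vertex arguments whose characteristic-$0$ counterpart for $KG$ is not currently available.
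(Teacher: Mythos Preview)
The statement you are attempting to prove is Conjecture~\ref{reduction}, which the paper does \emph{not} prove; it is stated as an open conjecture. The paper's main results (Theorem~\ref{A}, Theorem~\ref{J-control}, Corollary~\ref{2-step}) concern only faithful \emph{primitive} ideals, or more generally prime $J$-ideals in the sense of Definition~\ref{J-ideal}; even in the nilpotency-class-$2$ case the paper establishes standardness only for primitives, not for arbitrary primes. There is therefore no proof in the paper to compare your proposal against.

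Your sketch also has genuine gaps beyond the descent problem you flag. The ``upgrade from primitive to prime'' is not available with the tools of the paper: Theorem~\ref{B}, and hence Theorem~\ref{J-control}, requires the finiteness hypothesis that $\mathcal{O}A/\mathcal{O}A\cap P$ be finitely generated over~$\mathcal{O}$ (equivalently that $P$ be a $J$-ideal), and this fails for a general prime. Writing $P$ as an intersection of primitives does not help, since nothing in the Iwasawa-algebra setting guarantees such an intersection is finite or that control by $C_G(Z_2(G))$ passes to the intersection. Secondly, even granting control by $H_0=C_G(Z_2(G))$, the ideal $Q=P\cap KH_0$ need not be prime; the paper handles this via the non-splitting/virtually-non-splitting machinery (Propositions~\ref{p-01}, \ref{p-02}, Theorem~\ref{split}), which you bypass. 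Finally, in your descent step the central generators of $\bar Q$ lie in $Z(K(H_0/L_0))$, which is typically larger than the image of $KZ(H_0/L_0)$, and the paper provides no mechanism for proving $G$-invariance of such elements. Your outline is a plausible strategy for the nilpotent case, but it does not close, and the paper makes no claim to have proved the conjecture.
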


\noindent The following definition (\cite[Definition 1.1]{APB}) will be essential throughout:

\begin{definition}

Let $I$ be a right ideal of $KG$:\\

\noindent 1. We say that $I$ is \emph{faithful} if for all $g\in G$, $g-1\in I$ if and only if $g=1$, i.e. $G\to KG/I,g\mapsto g+I$ is injective.\\

\noindent 2. We say that $H\leq_c G$ \emph{controls} $I$ if $I=(I\cap KH)KG$.\\

\noindent Define the \emph{controller subgroup} of $I$ by $I^{\chi}:=\bigcap\{U\leq_o G: U$ controls $I\}$, and denote by $Spec^f(KG)$ the set of all faithful prime ideals of $KG$.

\end{definition}

\noindent It follows from \cite[Theorem A]{controller} that a closed subgroup $H$ of $G$ controls $I$ if and only if $I^{\chi}\subseteq H$, and we see immediately from Definition \ref{standard} that a faithful prime ideal $P$ of $KG$ is standard if and only if it is controlled by the centre.\\

Therefore, to prove Conjecture \ref{reduction}, it should remain only to prove that for any faithful prime ideal $P$ of $KG$, $P$ is controlled by $Z(G)$.

\subsection{Main Results}

It was proved in \cite{nilpotent} that Conjecture \ref{reduction} holds in the analogous characteristic $p$ setting if we assume further that $G$ is \emph{nilpotent}:

\begin{definition}\label{upper-central}

Let $H$ be a group, $n\in\mathbb{N}$. Define the \emph{$n$'th centre} $Z_n(H)$ of $H$ inductively by $Z_0(H):=1$, and for $n>0$, $Z_n(H):=\{h\in H:(h,H)\subseteq Z_{i-1}(H)\}$.

Then each $Z_n(H)$ is a normal subgroup of $H$, $Z_1(H)=Z(H)$, and we define the \emph{upper central series} of $H$ to be the ascending chain of subgroups:

\begin{center}
$1=Z_0(H)\subseteq Z_1(H)\subseteq Z_2(H)\subseteq\cdots$
\end{center}

We say that $H$ is \emph{nilpotent} if the upper central series terminates at $H$, i.e. $Z_n(H)=H$ for some $n\in\mathbb{N}$. The \emph{nilpotency class} of $H$ is the smallest integer $n$ such that $Z_n(H)=H$.

\end{definition}

It is not difficult to see that any nilpotent group is solvable, and note that for each $i\geq 0$, $Z_i(G)$ is a closed, isolated normal subgroup of $G$.\\

\noindent Recall that an ideal $I$ of $KG$ is \emph{primitive} if $I=Ann_{KG}(M)$ for some irreducible $KG$-module $M$. Here is our main result:

\begin{theorem}\label{A}

Let $G$ be a nilpotent, uniform pro-$p$ group, and let $P$ be a faithful, primitive ideal of $KG$. Then $P$ is controlled by $C_G(Z_2(G))$, the centraliser of $Z_2(G)$ in $G$.

\end{theorem}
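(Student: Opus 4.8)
The plan is to reduce the statement to a controller‐subgroup computation and then to exploit the ``nilpotent of class $2$ modulo the centre'' structure of $G$, which is exactly what $C_G(Z_2(G))$ sees. Write $H:=C_G(Z_2(G))$; since $Z_2(G)$ is normal, $H$ is a closed normal subgroup of $G$ containing $Z(G)$ (indeed containing $Z_2(G)$ itself, as $Z_2(G)/Z(G)$ is central in $G/Z(G)$ so $[Z_2(G),Z_2(G)]\subseteq Z(G)$ is centralised... more carefully, $[Z_2(G),Z_2(G)]=1$ need not hold, but $Z_2(G)$ does centralise $Z_2(G)$ precisely when that commutator subgroup is trivial; in any case $Z(G)\subseteq H$). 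By \cite[Theorem A]{controller} it suffices to prove $P^{\chi}\subseteq H$, i.e. that $H$ controls $P$, i.e. $P=(P\cap KH)KG$. The first reduction I would carry out is a dévissage on the nilpotency class: the class‐$\le 2$ case should be essentially trivial, since then $Z_2(G)=G$ and $C_G(Z_2(G))=Z(G)$, so the claim becomes that faithful primitive ideals are standard, which in class $2$ one expects to already be known (or to follow from a direct Mahler‑expansion/centre argument). For higher class one wants to quotient by a central subgroup of $Z(G)$ and induct, so I would first check that the controller subgroup behaves well under such quotients and that faithfulness and primitivity of the relevant image are preserved modulo passing to the isolator.

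The technical heart is the analysis of a faithful primitive $P$ via the orbit method / Dixmier‑type machinery for the enveloping‑algebra analogue, transported to the Iwasawa‑algebra setting. Concretely, I would pass to the completed localisation and use the theory of $\mathfrak{g}$‑module structure: $KG$ contains the ``Lie algebra'' $\mathfrak{g}=\log(G)$ (a $\mathbb{Q}_p$‑Lie algebra, nilpotent), and a primitive ideal $P$ corresponds to a $G$‑orbit of linear functionals, or at least to a maximal‑ideal‑like datum after central reduction. Let $\mathfrak{z}=Z(\mathfrak g)$ and $\mathfrak{z}_2=Z_2(\mathfrak g)$. A faithful primitive ideal $P$ determines a character $\lambda$ of $\mathfrak z$ (via the central characters appearing in the irreducible module $M$), and faithfulness forces $\lambda$ to be ``generic'' on $\mathfrak z$. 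The key structural input is that $C_{\mathfrak g}(\mathfrak z_2)$ is the stabiliser, under the coadjoint‑type action of $\mathfrak g$ on characters of $\mathfrak z_2$, of a generic extension of $\lambda$; equivalently, the bilinear form $(x,y)\mapsto \lambda([x,y])$ on $\mathfrak g$, which kills $C_{\mathfrak g}(\mathfrak z)$, has the property that its ``radical modulo $\mathfrak z$'' is controlled by $\mathfrak z_2$. I would make this precise and then invoke the general control principle: if a central character $\lambda$ on an isolated central subgroup is generic, then the primitive ideal lying over it is controlled by the centraliser of $\mathfrak z_2$, because beyond that centraliser the module $M$ ``looks induced'' and the annihilator cannot involve those group elements nontrivially.

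In practice I would realise this through the following steps, in order. (1) Replace $P$ by $P/(I\text{-torsion})$ and reduce to $G$ of class $\geq 3$, fixing an isolated central subgroup $Z\leq Z(G)$ with $G/Z$ still nilpotent, and set up the induction hypothesis for $G/Z$. (2) Choose the central character: since $P$ is faithful, $KG/P$ has a central subalgebra on which the induced map from $KZ(G)$ is injective; pick a maximal ideal / character $\lambda$ of (a suitable localisation of) $KZ(G)$ in the support of $M$ and reduce $KG$ modulo the corresponding central ideal, getting a primitive‑like quotient $A$ with a fixed generic central character. (3) Identify the controller: show that in $A$, conjugation by $g\in G$ acts on the (isolated) image of $Z_2(G)$ by a character that is trivial exactly when $g\in C_G(Z_2(G))$ — this is the nondegeneracy statement, and it is where the hypothesis $Z_2$ (rather than $Z_3$ etc.) is used: $[G,Z_2(G)]\subseteq Z(G)$, so the action of $G$ on $Z_2(G)$ modulo $Z(G)$ is by a pairing valued in characters of $Z(G)$, and genericity of $\lambda$ makes this pairing nondegenerate with radical $C_G(Z_2(G))$. (4) Conclude via \cite[Theorem A]{controller}: the nondegenerate pairing shows that any $U\leq_o G$ controlling $P$ must contain a set of coset representatives forcing $C_G(Z_2(G))\subseteq U$, hence $P^\chi\subseteq C_G(Z_2(G))$; conversely a standard ``Mahler expansion lies in $C_G(Z_2(G))$'' argument, as in \cite{nilpotent}, shows $C_G(Z_2(G))$ does control $P$.

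The step I expect to be the main obstacle is (3): proving the nondegeneracy/genericity statement in the Iwasawa‑algebra setting, where one does not have the clean polarisation and Kirillov‑orbit dictionary available for enveloping algebras of finite‑dimensional Lie algebras over a field of characteristic $0$. One must work with the $p$‑adic analytic distribution algebra $D(G,K)$, control denominators, and show that the formal/analytic central character behaves like an algebraic one; equivalently, one must verify that the relevant ``commutator form'' $\lambda([\,\cdot\,,\cdot\,])$ detects exactly $C_G(Z_2(G))$ even after completion and inversion of $p$. I would handle this by transferring to $\mathfrak g=\log G$, using that $Z_i(G)$ and $C_G(Z_2(G))$ correspond to $Z_i(\mathfrak g)$ and $C_{\mathfrak g}(\mathfrak z_2)$ under $\log$ (since these subgroups are isolated), proving the linear‑algebra fact over $K$, and then lifting back using the uniformity of $G$ to ensure $\log/\exp$ give an honest bijection on the relevant lattices. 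The remaining steps, especially the final control argument, should then be routine along the lines of \cite{controller} and \cite{nilpotent}.
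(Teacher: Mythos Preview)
Your proposal sketches an orbit-method/Kirillov approach and defers the eventual control statement to a nondegeneracy argument plus ``standard Mahler expansion'' techniques. This misses the actual difficulty and contains a genuine gap.

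The paper does not proceed by induction on nilpotency class or by extracting a generic central character $\lambda$ and analysing the form $\lambda([\cdot,\cdot])$. Instead it first observes that primitivity forces $Z(KG/P)$ to be finite-dimensional over $K$ (Proposition~\ref{rational}), so $KZ(G)/(P\cap KZ(G))$ is finite-dimensional; this is the ``$J$-ideal'' condition, and it is the crucial input you never isolate. That finiteness is precisely what makes Theorem~\ref{B} applicable: if $\varphi\in Aut^{\omega}(G)$ is nontrivial, trivial modulo a central subgroup $A$, fixes $P$, \emph{and} $\mathcal{O}A/(\mathcal{O}A\cap P)$ is finitely generated over $\mathcal{O}$, then $P$ is controlled by a proper open subgroup. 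In characteristic $0$ the Mahler-expansion argument of \cite{nilpotent} does not go through unaided, because the approximations $q_{i,m}=\tau(z(g_i)^{p^m}-1)$ grow only linearly in $m$ rather than exponentially (see Section~3.3); one needs compactness of the valuation ring of a \emph{finite} extension of $K$ to extract convergent subsequences (Proposition~\ref{convergence}), and that finiteness comes exactly from the $J$-ideal hypothesis. Conjugation by any $g\in Z_2(G)$ is trivial mod centre, so Theorem~\ref{B} applied with $H:=P^{\chi}$ forces every such $g$ to centralise $H$, whence $P^{\chi}\subseteq C_G(Z_2(G))$. The passage from the non-splitting case to arbitrary prime $J$-ideals then needs the essential-decomposition machinery (Proposition~\ref{p-02}, Theorem~\ref{split}), which your outline also omits.

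By contrast, your step~(3) asserts that faithfulness makes $\lambda$ ``generic'' and that genericity forces the radical of $\lambda([\cdot,\cdot])$ to be exactly $C_G(Z_2(G))$; neither claim is established, and the second can fail for particular characters of $Z(G)$ even when $P$ is faithful. Your step~(4) then argues that every open $U$ controlling $P$ satisfies $C_G(Z_2(G))\subseteq U$ and concludes $P^{\chi}\subseteq C_G(Z_2(G))$, but the premise yields only $C_G(Z_2(G))\subseteq P^{\chi}$, the wrong inclusion; no mechanism is given that actually links the bilinear form to the controller subgroup. Finally, you call the Mahler-expansion step ``standard, as in \cite{nilpotent}'', but the characteristic-$0$ version is exactly what breaks without the $J$-ideal finiteness input, and supplying that argument (Section~3) is the main technical content of the paper.
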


\noindent\textbf{Examples:} 1. If $G$ is nilpotent and contains a closed, isolated, abelian normal subgroup $H$ of codimension 1, it follows from Theorem \ref{A} that all faithful primitive ideals of $KG$ are controlled by $H$.\\

\noindent 2. Let $G$ be the group of unipotent, upper-triangular $4\times 4$ matrices over $\mathbb{Z}_p$, and let $\mathfrak{g}$ be the $\mathbb{Z}_p$-Lie algebra of $G$. 

Then $\mathfrak{g}$ is the Lie algebra of strictly upper triangular matrices over $p\mathbb{Z}_p$. This has basis $\{x_1,x_2,\cdots,x_6\}$ such that $x_3$ is central, $[x_1,x_4]=px_2$, $[x_1,x_5]=px_3$, $[x_2,x_6]=px_3$, $[x_4,x_6]=px_5$, all other brackets are zero.\\

\noindent Then $Z_2(\mathfrak{g})=$ Span$_{\mathbb{Z}_p}\{x_2,x_3,x_5\}$, and $C_{\mathfrak{g}}(Z_2(\mathfrak{g}))=$ Span$_{\mathbb{Z}_p}\{x_2,x_3,x_4,x_5\}$. This is abelian, so it follows from Theorem \ref{A} that any primitive ideal of $KG$ is controlled by an abelian subgroup of $G$.\\

\noindent 3. More generally, suppose that $G$ is the group of unipotent, upper triangular $n\times n$ matrices, then the $\mathbb{Z}_p$-Lie algebra $\mathfrak{g}$ of $G$ is the Lie algebra of strictly upper triangular $n\times n$ matrices over $p\mathbb{Z}_p$.

In this case, $C_{\mathfrak{g}}(Z_2(\mathfrak{g}))$ is precisely the set of all such matrices where the first two columns and the bottom two rows are zero, and it follows that $C_G(Z_2(G))$ is the group of unipotent, upper triangular matrices with zero in the (1,2)-position and in the (n-1,n)-position. For $n>4$, this is non-abelian.\\

\noindent Now, Theorem \ref{A} is not useful in the case when $Z(G)=1$, because in this case $C_G(Z_2(G))=G$ so the statement is obvious. Therefore, we draw no new conclusion if $G$ is semisimple, and in many cases when $G$ is solvable.\\

\noindent However, if $G$ is nilpotent, the result is highly useful, because for $G$ non-abelian, $C_G(Z_2(G))$ is always a proper subgroup of $G$. In particular, if $G$ has nilpotency class 2, then $Z_2(G)=G$ so $C_G(Z_2(G))=Z(G)$, and thus we achieve the following result:

\begin{corollary}\label{2-step}

Let $G$ be a uniform, nilpotent, pro-$p$ group of nilpotency class 2. Then all faithful, primitive ideals of $KG$ are standard.

\end{corollary}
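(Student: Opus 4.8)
The plan is to obtain this as an immediate consequence of Theorem \ref{A} combined with the characterisation of standard ideals recorded just after Definition \ref{standard}. First I would unwind what nilpotency class $2$ means for the upper central series: by Definition \ref{upper-central} it says precisely that $Z_2(G)=G$ (while $Z_1(G)\neq G$). Hence $C_G(Z_2(G))=C_G(G)=Z(G)$, so in this situation the conclusion of Theorem \ref{A} reads: every faithful, primitive ideal $P$ of $KG$ is controlled by the centre $Z(G)$.

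The second ingredient is that a primitive ideal of any associative, unital ring is prime, so the faithful primitive ideal $P$ is in particular a faithful prime ideal of $KG$. This is exactly what is needed in order to apply the equivalence noted in the excerpt — a consequence of Definition \ref{standard} together with \cite[Theorem A]{controller} — that a faithful prime ideal of $KG$ is standard if and only if it is controlled by $Z(G)$. Combining the two ingredients, $P$ is controlled by $C_G(Z_2(G))=Z(G)$, and therefore $P$ is standard.

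There is no serious obstacle here, since the corollary is essentially formal once Theorem \ref{A} is in hand; the only points that deserve an explicit line are the identification $C_G(Z_2(G))=Z(G)$ in the class-$2$ case (immediate from Definition \ref{upper-central}) and the observation that primitivity implies primeness, so that the standardness criterion genuinely applies to $P$.
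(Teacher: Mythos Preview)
Your proposal is correct and follows exactly the paper's own reasoning: the text preceding the corollary observes that nilpotency class $2$ gives $Z_2(G)=G$, hence $C_G(Z_2(G))=Z(G)$, so Theorem~\ref{A} yields control by the centre, and the remark after Definition~\ref{standard} (that a faithful prime ideal is standard iff it is centrally controlled) together with the fact that primitive ideals are prime finishes the argument.
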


\noindent To prove Theorem \ref{A}, we will adapt the approaches used in \cite{nilpotent}, \cite{controller} and \cite{APB} to study prime ideals in Iwasawa algebras in characteristic $p$:\\

\noindent Recall from \cite[\rom{2} 2.1.2]{Lazard} the definition of a $p$-valuation $\omega$ on a group $G$. We say that $G$ is \emph{$p$-valuable} if it carries a complete $p$-valuation and has finite rank, note that any closed subgroup of a uniform group is $p$-valuable.\\

\noindent The following result is analogous to \cite[Theorem B]{nilpotent}, and is the key step in the proof.

\begin{theorem}\label{B}

Let $G$ be a $p$-valuable group, and let $A$ be a closed, central subgroup of $G$. Let $P$ be a faithful, prime ideal of $\mathcal{O}G$ with $\pi\notin P$, and suppose there exists $\varphi\in Aut^{\omega}(G)$ such that $\varphi\neq 1$, $\varphi(P)=P$ and $\varphi(g)g^{-1}\in A$ for all $g\in G$.\\

\noindent Then if we assume that $\mathcal{O}A/\mathcal{O}A\cap P$ is finitely generated over $\mathcal{O}$, we conclude that $P$ is controlled by a proper, open subgroup of $G$.

\end{theorem}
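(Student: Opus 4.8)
The plan is to exploit the automorphism $\varphi$ to produce a non-trivial "skew" relation inside $\mathcal{O}G$ that forces the controller subgroup $P^\chi$ to be proper. Since $\varphi \in Aut^\omega(G)$ and $\varphi(g)g^{-1} \in A \subseteq Z(G)$ for all $g$, the map $\delta\colon g \mapsto \varphi(g)g^{-1}$ is a continuous $1$-cocycle from $G$ to the trivial $G$-module $A$, hence (since $A$ is central and $G$ acts trivially) a continuous group homomorphism $G \to A$. The first step is to extend $\varphi$ to a filtered automorphism of $\mathcal{O}G$, and — following the strategy of \cite[Theorem B]{nilpotent} and \cite{controller} — to consider the associated graded picture: passing to $\mathrm{gr}\,\mathcal{O}G$, which is a (skew) polynomial ring over $\mathrm{gr}\,\mathcal{O}$, and tracking how $\mathrm{gr}\,\varphi$ acts. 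The hypothesis that $\mathcal{O}A/(\mathcal{O}A \cap P)$ is finitely generated over $\mathcal{O}$ is what guarantees that the image of $A$ in $\mathcal{O}G/P$ is "small" — essentially that $A$ maps into a finite-rank, hence controllable, piece — and this is exactly the input needed to run the controller-subgroup machinery.

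Concretely, I would argue as follows. Fix a topological generating set, or better an ordered basis $g_1, \dots, g_d$ of $G$ adapted to the $p$-valuation $\omega$ and to the subgroup $A$. Because $\varphi(g)g^{-1} \in A$, in the coordinates $b_i := g_i - 1$ the automorphism $\varphi$ has the form $b_i \mapsto b_i + (\text{element of } \mathcal{O}A)\cdot(\text{unit}) + (\text{higher filtration})$, i.e. $\varphi$ is unipotent modulo $\mathcal{O}A$-translation. Now consider the two-sided ideal $P$: since $\varphi(P) = P$, for every $u \in P$ we have $\varphi(u) - u \in P$. Applying this to suitable lifts and using the $\mathcal{O}A$-linear nature of $\varphi - 1$, one extracts a non-zero element $a \in \mathcal{O}A$ with $a \notin P$ (this is where $\varphi \neq 1$ enters, together with faithfulness of $P$ to rule out degeneracy) such that multiplication by $a$ interacts with $P$ in a constrained way. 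The finite-generation hypothesis then says $\mathcal{O}A/(\mathcal{O}A\cap P)$ has a bounded $\mathcal{O}$-rank; combined with the action of $\varphi$ this forces $A \cap P$ to be "large", and via \cite[Theorem A]{controller} (the characterisation of control by $P^\chi \subseteq H$) one concludes $P^\chi$ omits the direction along which $\varphi$ moves, so $P^\chi$ is contained in $\ker\delta$, a proper open (since $\varphi \neq 1$ and $\varphi \in Aut^\omega(G)$, so $\delta$ has open kernel) subgroup of $G$.

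The main obstacle, as in the characteristic $p$ analogue, will be controlling the passage between $\mathcal{O}G$ and its associated graded ring: the graded automorphism $\mathrm{gr}\,\varphi$ may act trivially even when $\varphi$ does not, so one must work with a filtration fine enough (a suitable $\mathbb{Z}^d$-filtration, or a quantised/deformed filtration as in \cite{controller}) that detects the $\mathcal{O}A$-component of $\varphi - 1$ in the leading term. The delicate point is to arrange that the leading term of $\varphi(u) - u$, for $u$ a "minimal" element of $P$, does not vanish — equivalently, that the cocycle $\delta$ is visible after grading — and then to leverage the finite-generation of $\mathcal{O}A/(\mathcal{O}A\cap P)$ to bound the relevant Poincaré-series data. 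Once that is set up, the conclusion that $P$ is controlled by a proper open subgroup follows by the same compactness/noetherian argument used for \cite[Theorem B]{nilpotent}: the controller subgroup $P^\chi$ is open (standard), and the skew relation forces it into the proper subgroup $\ker\delta$.
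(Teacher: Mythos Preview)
Your proposal has a genuine gap, and in fact contains a concrete error. You claim that $\ker\delta$ is open in $G$ because $\varphi\neq 1$ and $\varphi\in Aut^\omega(G)$; this is false. The map $\delta:G\to A$ is a continuous homomorphism into a torsion-free abelian pro-$p$ group, so its image is either trivial or infinite, and hence $\ker\delta$ is open only when $\varphi=1$. (Take $G=\mathbb{Z}_p^2$, $A=\langle g_2\rangle$, $\varphi(g_1)=g_1g_2$, $\varphi(g_2)=g_2$: then $\ker\delta=\langle g_2\rangle$ has infinite index.) So the target subgroup you aim for is simply the wrong one; the proper open subgroup that actually controls $P$ is not $\ker\delta$ but a valuation-theoretic object $U=\{g\in G:v(\tau(z(g)-1))>\lambda\}$, which does contain $G^p$.

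More seriously, the strategy you outline --- pass to $\mathrm{gr}\,\mathcal{O}G$ and detect the cocycle in leading terms --- is precisely the characteristic-$p$ argument of \cite[Theorem B]{nilpotent}, and the paper explains in Section~3.3 why it breaks down here: in characteristic $0$ the Mahler approximations $q_{i,m}=\tau(z(g_i)^{p^m}-1)$ grow only \emph{linearly} in $m$ (namely $v(q_{i,m})=mv(p)+\lambda$ by Lemma~\ref{value}), not exponentially, so one cannot divide out the first-order terms in the Mahler expansion of $\varphi^{p^m}$ without destroying convergence of the tail. Your suggestion of a ``fine enough filtration'' does not address this, and ``bounding Poincar\'e-series data'' is not how the finite-generation hypothesis enters.

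What the paper actually does with the hypothesis that $\mathcal{O}A/(\mathcal{O}A\cap P)$ is finitely generated over $\mathcal{O}$ is the following: its $K$-span $F$ is then a \emph{finite} extension of $K$, so its valuation ring $\mathcal{V}$ is \emph{compact}. All the ratios $q_{1,m}^{-1}q_{i,m}$ lie in $\mathcal{V}$, and compactness lets one extract a subsequence along which they converge to limits $\beta_i\in\mathcal{V}$. Passing to the limit in the (normalised) Mahler expansion yields a single $F$-linear derivation $h=\beta_1\tau\partial_1+\cdots+\beta_d\tau\partial_d$ killing $P$; iterating $h$ and using a Fermat-type congruence in the residue field of $\mathcal{V}$ produces the idempotent-like endomorphism $\iota=\tau\rho(f)$ (with $f$ the indicator of $G\setminus U$), and then Proposition~\ref{loc-constant} gives control by $U$. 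The compactness step is the new idea specific to characteristic $0$, and it is entirely absent from your outline.
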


\noindent We will prove Theorem \ref{B} in section 3, using the theory of Mahler expansions, and in section 4 we will show that this is enough to imply the appropriate control theorem for all prime ideals satisfying the appropriate finiteness condition, and ultimately prove Theorem \ref{A}.

\section{Preliminaries}

\subsection{The $C(G,\mathcal{O})$-action and Mahler expansions}

Fix $G$ a compact $p$-adic Lie group, and define: 

\begin{center}
$C(G,\mathcal{O}):=\{f:G\to\mathcal{O}: f$ continuous$\}$.
\end{center} 

\noindent Then $C(G,\mathcal{O})$ is an $\mathcal{O}$-algebra with pointwise addition and multiplication. Also, for each $n\in\mathbb{N}$, define: 

\begin{center}
$C_n=C(G,\frac{\mathcal{O}}{\pi^n\mathcal{O}}):=\{f:G\to\frac{\mathcal{O}}{\pi^n\mathcal{O}}:f$ locally constant$\}$.
\end{center}

\noindent Then each $C_n$ is an $\mathcal{O}$-algebra, and there exists a surjective map:

\begin{center}
$c_{n+1,n}:C_{n+1}\to C_n,f\mapsto (h:G\to\frac{\mathcal{O}}{\pi^n\mathcal{O}}, g\mapsto f(g)+\pi^n\mathcal{O})$.
\end{center} 

\noindent Furthermore, there exists a surjective map $c_n:C(G,\mathcal{O})\to C_n,f\mapsto (h:G\to \frac{\mathcal{O}}{\pi^n\mathcal{O}},g\mapsto f(g)+\pi^n\mathcal{O})$, and clearly $c_{n+1,n}\circ c_{n+1}=c_n$ for all $n$.

\begin{lemma}\label{limit}

$C(G,\mathcal{O})=\underset{n\in\mathbb{N}}{\varprojlim}$ $C_n$ in the category of $\mathcal{O}$-algebras.

\end{lemma}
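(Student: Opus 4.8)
The plan is to exhibit the compatible family $(c_n)_{n\in\mathbb{N}}$ as a limiting cone. First I would observe that since $c_{n+1,n}\circ c_{n+1}=c_n$, the maps $c_n$ assemble into an $\mathcal{O}$-algebra homomorphism $c\colon C(G,\mathcal{O})\to\varprojlim_n C_n$, where the inverse limit is formed in the category of $\mathcal{O}$-algebras and, as usual, is realised concretely as the $\mathcal{O}$-subalgebra of $\prod_n C_n$ consisting of compatible tuples, equipped with componentwise operations. It then suffices to check that $c$ is bijective: an $\mathcal{O}$-algebra isomorphism onto this concrete limit is automatically a limit in the categorical sense. For injectivity, if $f\in C(G,\mathcal{O})$ satisfies $c_n(f)=0$ for all $n$, then $f(g)\in\pi^n\mathcal{O}$ for every $g\in G$ and every $n$, so $f(g)\in\bigcap_n\pi^n\mathcal{O}=0$ since $\mathcal{O}$ is a complete discrete valuation ring; hence $f=0$.

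For surjectivity, let $(f_n)_n$ be a compatible tuple, $f_n\in C_n$ with $c_{n+1,n}(f_{n+1})=f_n$. For each fixed $g\in G$ the elements $f_n(g)\in\mathcal{O}/\pi^n\mathcal{O}$ form a compatible sequence, hence determine a unique element $f(g)\in\varprojlim_n\mathcal{O}/\pi^n\mathcal{O}=\mathcal{O}$, using that $\mathcal{O}$ is $\pi$-adically complete. This defines a function $f\colon G\to\mathcal{O}$ with $f(g)+\pi^n\mathcal{O}=f_n(g)$ for all $n$ and $g$, so $c_n(f)=f_n$ holds once we know $f$ lies in $C(G,\mathcal{O})$, i.e. is continuous. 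To verify continuity, fix $g_0\in G$ and $n\in\mathbb{N}$: since $G$ is profinite (being a compact $p$-adic Lie group) and $f_n$ is locally constant, there is an open neighbourhood $U$ of $g_0$ on which $f_n$ is constant, whence for $g\in U$ we have $f(g)-f(g_0)\equiv f_n(g)-f_n(g_0)=0\pmod{\pi^n\mathcal{O}}$; letting $n$ vary shows $f$ is continuous (indeed uniformly so, as each such $U$ may be taken from a clopen partition of $G$). Finally, the componentwise description of the operations on both sides makes $c$ an $\mathcal{O}$-algebra homomorphism, so $c$ is an isomorphism of $\mathcal{O}$-algebras.

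The only point requiring genuine care is the continuity of the function $f$ built pointwise in the surjectivity step; the rest is formal, resting on the completeness of $\mathcal{O}$ and on the fact that locally constant $\mathcal{O}/\pi^n\mathcal{O}$-valued functions on the profinite group $G$ factor through finite discrete quotients. I do not anticipate a serious obstacle here.
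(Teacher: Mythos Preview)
Your argument is correct. The paper's proof is organised slightly differently: it first asserts that $C(G,\mathcal{O})$ is $\pi$-adically complete, so that $C(G,\mathcal{O})=\varprojlim_n C(G,\mathcal{O})/\pi^n C(G,\mathcal{O})$, and then shows that the map $c_n$ induces an isomorphism $C(G,\mathcal{O})/\pi^n C(G,\mathcal{O})\cong C_n$ for each $n$ (surjectivity of $c_n$ having been asserted before the lemma, and the kernel being visibly $\pi^n C(G,\mathcal{O})$). Your proof unwinds exactly the same content by hand: your injectivity step is $\pi$-adic separatedness of $C(G,\mathcal{O})$, and your surjectivity step, building $f$ pointwise from the $f_n$ and then checking continuity, is precisely what underlies both the $\pi$-adic completeness of $C(G,\mathcal{O})$ and the surjectivity of each $c_n$. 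So the two arguments are equivalent; yours is more explicit about the continuity verification, which the paper leaves implicit, while the paper's packaging via $\pi$-adic completeness is a little quicker once that fact is granted.
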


\begin{proof}

Firstly, note that $C(G,\mathcal{O})$ is $\pi$-adically complete, and thus

\begin{equation}\label{cont}
C(G,\mathcal{O})=\underset{n\in\mathbb{N}}{\varprojlim}{C(G,\mathcal{O})/\pi^nC(G,\mathcal{O})}
\end{equation}

\noindent Consider the morphism of $\mathcal{O}$-algebras:

\begin{center}
$\Theta:C(G,\mathcal{O})\to C_n,f\mapsto (f':G\to \frac{\mathcal{O}}{\pi^n\mathcal{O}},g\mapsto f(g)+\pi^n\mathcal{O})$
\end{center}

\noindent It is clear that this map is surjective, and if $\Theta(f)=0$ then $f(g)\in\pi^n\mathcal{O}$ for all $g\in G$, so $f\in\pi^n C(G,\mathcal{O})$. Thus $ker(\Theta)=\pi^n C(G,\mathcal{O})$, so $C(G,\mathcal{O})/\pi^n C(G,\mathcal{O})\cong C_n$, and the result follows from (\ref{cont}).\end{proof}

\vspace{0.1in}

\noindent For convenience, set $A_n:=\frac{\mathcal{O}}{\pi^n\mathcal{O}}$ for each $n\in\mathbb{N}$, and note that $\mathcal{O}G/\pi^n\mathcal{O}G\cong A_nG$.\\ 

Recall from \cite[Proposition 2.5]{controller} that there exists an action $\rho_n:C_n\to End_{\mathcal{O}}(A_nG)$ for each $n$ such that if $U\leq_o G$ and $f\in C_n$ is constant on the cosets of $U$, and $r\in A_nG$ with $r=\underset{g\in G//U}{\sum}{s_gg}$ for some $s_g\in A_nU$, then $\rho_n(f)(r)=\underset{g\in G//U}{\sum}{f(g)s_gg}$.\\

\noindent Consider the canonical homomorphisms:

\begin{equation}
\nu_n:End_{\mathcal{O}}(A_nG)\to Hom(\mathcal{O}G,A_nG), f\mapsto (g:\mathcal{O}G\to\mathcal{O}G/\pi^n\mathcal{O}G,r\mapsto f(r+\pi^n\mathcal{O}G))
\end{equation}

\begin{equation}
\mu_{n+1,n}:Hom(\mathcal{O}G,A_{n+1}G)\to Hom(\mathcal{O}G,A_nG),f\mapsto (g:\mathcal{O}G\to\mathcal{O}G/\pi^n\mathcal{O}G,r\mapsto f(r)+\pi^n\mathcal{O}G)
\end{equation}

\noindent These give rise to the following commutative diagram for each $n\in\mathbb{N}$:

\begin{center}
\begin{tikzcd}

&C_n \arrow[r, "\rho_n"]   & End_{\mathcal{O}}(A_nG)\arrow[r,"\nu_n"] &Hom_{\mathcal{O}}(\mathcal{O}G,A_nG)\\
&C_{n+1} \arrow [u,"c_{n+1,n} "] \arrow[r, "\rho_{n+1}"] &End_{\mathcal{O}}(A_{n+1}G) \arrow[r,"\nu_{n+1}"]& Hom_{\mathcal{O}}(A_{n+1}G) \arrow[u,"\mu_{n+1,n}"]

\end{tikzcd}
\end{center}

\noindent Now, using a similar argument to Lemma \ref{limit}, we get $Hom_{\mathcal{O}}(\mathcal{O}G,\mathcal{O}G)=\underset{n\in\mathbb{N}}{\varprojlim}$ $Hom_{\mathcal{O}}(\mathcal{O}G,A_nG)$, so it follows that there is a unique map from $C(G,\mathcal{O})=\underset{n\in\mathbb{N}}{\varprojlim}$ $C_n$ to $Hom_{\mathcal{O}}(\mathcal{O}G,\mathcal{O}G)=End_{\mathcal{O}}{\mathcal{O}G}$ making the corresponding diagrams commute.

\begin{definition}\label{action}

Define $\rho:C(G,\mathcal{O})\to End_{\mathcal{O}}{\mathcal{O}G}$ to be the unique morphism defined above. We call this the \emph{canonical action} of $C(G,\mathcal{O})$ on $\mathcal{O}G$.

\end{definition}

\noindent Note that for each $n\in\mathbb{N}$, $f\in C_n$, $g\in G$, $\rho_n(f)(g)=f(g)g$, and it follows that for each $f\in C(G,\mathcal{O})$, we still have that $\rho(f)(g)=f(g)g$.

Also, note that if $f\in C(G,\mathcal{O})$ is locally constant, then $\rho(f)$ is the same as the endomorphism defined in \cite[Proposition 2.5]{controller}. Therefore we have the following result \cite[Proposition 2.8]{controller}, which will be useful to us later:

\begin{proposition}\label{loc-constant}

For each $U\leq_o G$, let $C_U:=\{f\in C(G,\mathcal{O}):f$ is constant on the cosets of $U\}$. Then $C_U$ is an $\mathcal{O}$-subalgebra of $C(G,\mathcal{O})$, and for any right ideal $I$ of $\mathcal{O}G$, $I$ is controlled by $U$ if and only if $\rho(C_U)(I)\subseteq I$.

\end{proposition}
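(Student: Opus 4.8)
The plan is to reduce the whole statement to the explicit description of $\rho(f)$ for locally constant $f$ recorded just before the proposition. First, $C_U$ is visibly closed under pointwise addition and multiplication and contains the constant functions, so it is an $\mathcal{O}$-subalgebra; this part is immediate. The point to exploit is that every $f\in C_U$ is \emph{locally constant}, being constant on the cosets of the open subgroup $U$, so $\rho(f)$ coincides with the endomorphism of \cite[Proposition 2.5]{controller}. Concretely, I would fix a set $S$ of coset representatives for $U$ in $G$ (a finite set, since $[G:U]<\infty$); then $\mathcal{O}G$ is free over $\mathcal{O}U$ on $S$, every $r\in\mathcal{O}G$ has a unique expression $r=\sum_{g\in S}s_g g$ with $s_g\in\mathcal{O}U$, and $\rho(f)(r)=\sum_{g\in S}f(g)\,s_g\,g$, where $f(g)$ denotes the constant value of $f$ on the coset of $g$. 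Everything below is extracted from this identity.

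For the forward implication, suppose $U$ controls $I$, i.e. $I=(I\cap\mathcal{O}U)\mathcal{O}G$. Since $\mathcal{O}G=\bigoplus_{g\in S}\mathcal{O}U\,g$ is a direct sum, we get $(I\cap\mathcal{O}U)\mathcal{O}G=\bigoplus_{g\in S}(I\cap\mathcal{O}U)\,g$, so for $r=\sum_g s_g g\in I$ each coefficient $s_g$ lies in $I\cap\mathcal{O}U$. Then for $f\in C_U$ the formula gives $\rho(f)(r)=\sum_g f(g)s_g g$ with $f(g)s_g\in I\cap\mathcal{O}U$ (an $\mathcal{O}$-multiple of an element of the ideal $I\cap\mathcal{O}U$ of $\mathcal{O}U$), hence $\rho(f)(r)\in I$. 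Thus $\rho(C_U)(I)\subseteq I$.

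For the converse I would pick out coefficients using indicator functions. Since $(I\cap\mathcal{O}U)\mathcal{O}G\subseteq I$ always holds for a right ideal, it suffices to prove the reverse inclusion. Given $r=\sum_{g\in S}s_g g\in I$ and a fixed $g_0\in S$, let $\mathbf{1}_{Ug_0}\colon G\to\mathcal{O}$ be the indicator function of the coset $Ug_0$; it is locally constant, $\{0,1\}$-valued and constant on the cosets of $U$, so $\mathbf{1}_{Ug_0}\in C_U$, and the displayed formula yields $\rho(\mathbf{1}_{Ug_0})(r)=s_{g_0}g_0$. By hypothesis $s_{g_0}g_0\in I$, and since $I$ is a right ideal, $s_{g_0}=(s_{g_0}g_0)g_0^{-1}\in I$, so $s_{g_0}\in I\cap\mathcal{O}U$. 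As $g_0\in S$ was arbitrary, $r\in(I\cap\mathcal{O}U)\mathcal{O}G$, completing the argument.

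The proof is essentially formal, so there is no serious obstacle; the only points requiring care are the identification of $\rho(f)$ with the naive coefficient-wise action for $f\in C_U$ (which we are handed by the remark preceding the statement) and keeping the left/right coset conventions consistent with the right $\mathcal{O}G$-module structure used both in writing $r=\sum_{g\in S}s_g g$ and in stripping off $g_0$ by multiplying by $g_0^{-1}$. With those conventions fixed, the statement drops out of the free-module decomposition of $\mathcal{O}G$ over $\mathcal{O}U$ together with the existence of the idempotent functions $\mathbf{1}_{Ug_0}$ in $C_U$.
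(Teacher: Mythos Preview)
Your argument is correct. Note, however, that the paper does not actually supply a proof of this proposition: it merely records it as \cite[Proposition 2.8]{controller}, after observing that for locally constant $f$ the action $\rho(f)$ agrees with the endomorphism constructed in \cite[Proposition 2.5]{controller}. What you have written is precisely the direct argument that underlies that cited result: use the free left $\mathcal{O}U$-module decomposition $\mathcal{O}G=\bigoplus_{g\in S}\mathcal{O}U\,g$, read off the coefficient-wise description $\rho(f)\bigl(\sum_g s_g g\bigr)=\sum_g f(g)\,s_g\,g$ for $f\in C_U$, and then separate coefficients with the coset indicators $\mathbf{1}_{Ug_0}\in C_U$. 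The only delicate points are exactly the ones you flag --- that $\rho$ agrees with the naive action on locally constant functions (granted by the remark preceding the statement), and consistency of the right-coset convention with the right-ideal condition --- and you handle both correctly. So your proof is fine; it simply fills in what the paper outsources to \cite{controller}.
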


\noindent Now we will recap the notion of the Mahler expansion of an automorphism of $G$. This was defined in \cite{nilpotent} working over a field of characteristic $p$, and we will now use the canonical action to extend the notion to fields of characteristic 0.\\

First recall the following result, first proved by Mahler in 1958, and given in full in \cite[\rom{3} 1.2.4]{Lazard}:

\begin{theorem}\label{Mahler}

Let $M$ be a complete topological $\mathbb{Z}_p$-module, and let $f:\mathbb{Z}_p^d\to M$ be a continuous function. Then for each $\alpha\in\mathbb{N}^d$, define $m_{\alpha}(f):=\underset{\beta\leq\alpha}{\sum}{(-1)^{\alpha-\beta}\binom{\alpha}{\beta}f(\beta)}$, where $\alpha\leq\beta$ if $\alpha_i\leq\beta_i$ for all $i$, and $\binom{\alpha}{\beta}=\underset{i\leq d}{\prod}{\binom{\alpha_i}{\beta_i}}$.

Then $m_{\alpha}(f)\rightarrow 0$ as $\vert\alpha\vert\rightarrow\infty$, and for all $\gamma\in\mathbb{Z}_p^d$, $f(\gamma)=\underset{\alpha\in\mathbb{N}^d}{\sum}{m_{\alpha}(f)\binom{\gamma}{\alpha}}$.

\end{theorem}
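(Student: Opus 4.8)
The plan is to establish the one-variable case ($d=1$) by finite-difference calculus on the compact group $\mathbb{Z}_p$, and then obtain general $d$ by induction on the number of variables. For the inductive step I would equip $M':=C(\mathbb{Z}_p^{d-1},M)$ with the topology of uniform convergence, so that it is again a complete topological $\mathbb{Z}_p$-module, and observe that a continuous $f:\mathbb{Z}_p^d\to M$ is the same datum as a continuous $\widetilde f:\mathbb{Z}_p\to M'$, continuity of $\widetilde f$ being precisely uniform continuity of $f$ in the last variable, which holds by compactness. Applying the one-variable statement to $\widetilde f$, and then the inductive hypothesis (in $d-1$ variables) to each coefficient $m_n(\widetilde f)\in M'$, produces a doubly-indexed expansion; a routine bookkeeping step identifies the coefficients with the $m_\alpha(f)$ and, using that all error terms tend to $0$, shows the iterated sum can be rearranged into the one claimed.

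So fix $d=1$. On the $\mathbb{Z}_p$-module $C(\mathbb{Z}_p,M)$ introduce the shift $E$ and the forward difference $\Delta:=E-1$, so that $m_n(f)=(\Delta^nf)(0)$ and $(\Delta^nf)(x)=\sum_k(-1)^{n-k}\binom{n}{k}f(x+k)$. I would first record two elementary inputs: (a) for $\gamma\in\mathbb{Z}_p$ the function $\gamma\mapsto\binom{\gamma}{n}$ is continuous with values in $\mathbb{Z}_p$ — it is a polynomial which is $\mathbb{Z}$-valued on the dense subset $\mathbb{N}$ — so $|\binom{\gamma}{n}|_p\le 1$; and (b) the operator identity $E^{p^N}=(1+\Delta)^{p^N}=\sum_{j=0}^{p^N}\binom{p^N}{j}\Delta^j$, together with $v_p\binom{p^N}{j}\ge 1$ for $0<j<p^N$ (immediate from $\binom{p^N}{j}=\tfrac{p^N}{j}\binom{p^N-1}{j-1}$), where $v_p$ is the $p$-adic valuation.

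The heart of the argument is to show $(\Delta^nf)(x)\to 0$ uniformly in $x$ as $n\to\infty$; this yields $m_n(f)\to 0$ and is exactly what will make the reconstruction series summable. Fix a bounded $E$-stable $\mathbb{Z}_p$-submodule $L\subseteq C(\mathbb{Z}_p,M)$ containing $f$ — for instance the functions valued in the closed $\mathbb{Z}_p$-span of $f(\mathbb{Z}_p)$; since $\Delta$ is a polynomial in $E$ it preserves $L$. I claim by induction on $r\ge 0$ that there is $n_r\in\mathbb{N}$ with $\Delta^nf\in p^rL$ for all $n\ge n_r$: take $n_0=0$, and for the step use uniform continuity of $f$ to choose $N$ with $p^N\ge n_r$ and $(E^{p^N}-1)f\in p^{r+1}L$; then, expanding $\Delta^{p^N}=E^{p^N}-1-\sum_{j=1}^{p^N-1}\binom{p^N}{j}\Delta^j$ and applying it to $\Delta^kf$ for $k\ge n_r$,
\[\Delta^{k+p^N}f=(E^{p^N}-1)(\Delta^kf)-\sum_{j=1}^{p^N-1}\binom{p^N}{j}\,\Delta^{k+j}f .\]
The first term equals $\Delta^k\big((E^{p^N}-1)f\big)\in p^{r+1}L$, and each summand lies in $p\cdot p^rL=p^{r+1}L$ (as $k+j\ge n_r$ forces $\Delta^{k+j}f\in p^rL$ and $p\mid\binom{p^N}{j}$); hence $n_{r+1}:=n_r+p^N$ works. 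For reconstruction, since $m_\alpha(f)\to 0$ and $|\binom{\gamma}{\alpha}|_p\le 1$, the partial sums of $g(\gamma):=\sum_\alpha m_\alpha(f)\binom{\gamma}{\alpha}$ are uniformly Cauchy in $\gamma$ (the tail over $|\alpha|\ge n_r$ lies in $p^rL$), so by completeness of $M$ they converge to a $g$ which is continuous, being a uniform limit of continuous functions on the compact space $\mathbb{Z}_p^d$; and for $\gamma\in\mathbb{N}^d$ the finite identity $f(\gamma)=E^\gamma f(0)=\sum_{\alpha\le\gamma}\binom{\gamma}{\alpha}(\Delta^\alpha f)(0)$ (Newton's forward-difference formula, the operator identity $E^\gamma=(1+\Delta)^\gamma$ at $0$), together with $\binom{\gamma}{\alpha}=0$ for $\alpha\not\le\gamma$, gives $g(\gamma)=f(\gamma)$, whence $f=g$ everywhere by density of $\mathbb{N}^d$ and continuity.

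I expect the decay estimate $\Delta^nf\to 0$ to be the main obstacle: closing the induction on the $p$-adic order $r$ forces one to play off uniform continuity on the compact group, the binomial operator identity, and the divisibility $p\mid\binom{p^N}{j}$ against each other in just the right way, and in the stated generality one must take mild care in choosing the stabilising lattice $L$ and in making the smallness of $(E^{p^N}-1)f$ precise without a norm — though in every case relevant to this paper $M$ is $\pi$-adically complete over $\mathcal{O}$ and no such care is required.
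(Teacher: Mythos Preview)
The paper does not give its own proof of this statement; it simply cites Lazard \cite[\rom{3} 1.2.4]{Lazard} (the result being originally due to Mahler). Your argument is essentially the classical one found there: reduce to $d=1$ by currying, then use the forward-difference calculus together with uniform continuity on the compact group and the divisibility $p\mid\binom{p^N}{j}$ for $0<j<p^N$ to force $\Delta^n f\to 0$, and reconstruct via Newton's formula on the dense set $\mathbb{N}^d$. The proof is correct, and you have correctly isolated the only genuinely delicate point --- making ``$(E^{p^N}-1)f\in p^{r+1}L$'' precise in a general complete topological $\mathbb{Z}_p$-module without a norm; in Lazard's setting, and in every instance used in this paper, $M$ is linearly topologized (indeed $\pi$-adically complete over $\mathcal{O}$), so the filtration $\{p^rL\}_r$ is a neighbourhood basis of $0$ in $L$ and the induction closes exactly as you wrote.
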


\noindent We call this $m_{\alpha}(f)$ the \emph{$\alpha$-Mahler coefficient} for $f$.\\

\noindent From now on, we will assume that $G$ is $p$-valuable, with $p$-valuation $\omega$, and we fix an ordered basis $\underline{g}=\{g_1,\cdots,g_d\}$ for $G$.\\ 

\noindent Then given $\varphi\in Aut^{\omega}(G):=\{\phi\in Aut(G):\omega(\phi(g)g^{-1})-\omega(g)>\frac{1}{p-1}$ for all $g\in G\}$, we have a continuous map $f:\mathbb{Z}_p^d\to\mathcal{O}G,\beta\mapsto\varphi(\underline{g}^{\beta})\underline{g}^{-\beta}$.\\

Using Theorem \ref{Mahler}, define $m_{\alpha}(\varphi):=m_{\alpha}(f)$ to be the \emph{$\alpha$-Mahler coefficient} for $\varphi$, and we have that for all $\beta\in \mathbb{Z}_p^d$:

\begin{center}
$\varphi(\underline{g}^{\beta})=\underset{\alpha\in\mathbb{N}^d}{\sum}{m_{\alpha}(\varphi)\binom{\beta}{\alpha}\underline{g}^{\beta}}$
\end{center}

\noindent Now, for each $\alpha\in\mathbb{Z}_p^d$, define $i_{\underline{g}}^{(\alpha)}:G\to\mathcal{O},\underline{g}^{\beta}\mapsto\binom{\beta}{\alpha}$, then clearly $i_{\underline{g}}^{(\alpha)}\in C(G,\mathcal{O})$, so let $\partial_{\underline{g}}^{(\alpha)}:=\rho(i_{\underline{g}}^{(\alpha)})\in End_{\mathcal{O}}(\mathcal{O}G)$. We call this the \emph{$\alpha$-quantized divided power}.\\

\noindent Then $\partial_{\underline{g}}^{(\alpha)}(g)=i_{\underline{g}}^{(\alpha)}(g)g$ for all $g\in G$, i.e. for all $\beta\in\mathbb{Z}_p^d$, $\partial_{\underline{g}}^{(\alpha)}(\underline{g}^{\beta})=\binom{\beta}{\alpha}\underline{g}^{\beta}$.\\

\noindent Hence $\varphi(g)=\underset{\alpha\in\mathbb{N}^d}{\sum}{m_{\alpha}(\varphi)\partial_{\underline{g}}^{(\alpha)}}(g)$ for all $g\in G$.\\

\noindent Therefore, after extending linearly to $\mathcal{O}[G]$ and passing to the completion, we should get (after establishing a suitable convergence condition for the Mahler coefficients) that:

\begin{equation}\label{Mahler1}
\varphi=\sum_{\alpha\in\mathbb{N}^d}{m_{\alpha}(\varphi)\partial_{\underline{g}}^{(\alpha)}}
\end{equation}

\noindent as an endomorphism of $\mathcal{O}G$. This is the \emph{Mahler expansion} of $\varphi$.\\

\noindent In section 3, we will see how to use Mahler expansions for suitable automorphisms to prove a control theorem.

\subsection{Non-commutative valuations}

Recall the definition of a \emph{filtration} $w:R\to\mathbb{Z}\cup\{\infty\}$ on a ring $R$ from \cite[Definition 2.2]{APB}, and of the subgroups $F_nR:=\{r\in R:w(r)\geq n\}$. Recall from \cite{LVO} that $w$ is a \emph{Zariskian filtration} if $F_1R\subseteq J(F_0R)$ and the \emph{Rees ring} $\underset{n\in\mathbb{N}}{\oplus}{F_nR}t^n$ is Noetherian.

Note that Zariskian filtrations are always separated.\\

\noindent An element $x\in R\backslash{0}$ is \emph{$w$-regular} if $w(xy)=w(x)+w(y)$ for all $y\in R$, and we say that $w$ is a valuation if every non-zero element of $R$ is $w$-regular. Finally, if $S$ is a central subring of $R$, we say that $w$ is $S$-linear if every non-zero element of $S$ is $w$-regular.\\

\noindent\textbf{Example:} If $\mathcal{O}$ is a complete, commutative DVR, and $(G,\omega)$ is a $p$-valuable group with ordered basis $\underline{g}=\{g_1,\cdots,g_d\}$, then $\mathcal{O}G=\{\underset{\alpha\in\mathbb{N}^d}{\sum}{\lambda_{\alpha}b_1^{\alpha_1}\cdots b_d^{\alpha_d}}:\lambda_{\alpha}\in\mathcal{O}\}$, where $b_i=g_i-1$.\\ 

\noindent By \cite[\rom{3} 2.3.3]{Lazard}, $\mathcal{O}G$ carries a complete, Zariskian valuation $w$ given by 

\begin{center}
$w(\underset{\alpha\in\mathbb{N}^d}{\sum}{\lambda_{\alpha}b_1^{\alpha_1}\cdots b_d^{\alpha_d}})=\inf\{v_p(\lambda_{\alpha})+\underset{i\leq n}{\sum}{\alpha_i\omega(g_i)}:\alpha\in\mathbb{N}^d\}$
\end{center}

\noindent and we may assume that the associated graded ring is a commutative polynomial ring over $k:=\mathcal{O}/\pi\mathcal{O}$ in $d+1$ variables.

\begin{lemma}\label{value}

Suppose $w$ is an $\mathcal{O}$-linear filtration on an $\mathcal{O}$-algebra $A$ of characteristic 0, and suppose that $x\in A$ with $w(x-1)>w(p)$. Then $w(x^{p^m}-1)=mw(p)+w(x-1)$ for all $m\in\mathbb{N}$.

\end{lemma}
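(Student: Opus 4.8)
The plan is to prove the identity by induction on $m$, reducing everything to the single-step estimate: if $z\in A$ satisfies $w(z)>w(p)$, then $w\big((1+z)^p-1\big)=w(p)+w(z)$. First I would record that, since $w$ is $\mathcal{O}$-linear, $w$ is non-negative on $\mathbb{Z}\setminus\{0\}$ (one has $w(1)=0$, and submultiplicativity plus induction give $w(n)\geq 0$ for every non-zero integer $n$); in particular $w(p)\geq 0$. The case $m=0$ is trivial. For the inductive step set $z:=x^{p^m}-1$, so that $x^{p^{m+1}}-1=(1+z)^p-1$; by the inductive hypothesis $w(z)=m\,w(p)+w(x-1)$, and since $m\,w(p)\geq 0$ and $w(x-1)>w(p)$ we get $w(z)\geq w(x-1)>w(p)$, so the single-step estimate applies and yields $w\big(x^{p^{m+1}}-1\big)=w(p)+w(z)=(m+1)\,w(p)+w(x-1)$, completing the induction.

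For the single-step estimate I would expand $(1+z)^p-1=pz+\sum_{k=2}^{p}\binom{p}{k}z^k$. As $p$ is a non-zero element of $\mathcal{O}$ it is $w$-regular, so $w(pz)=w(p)+w(z)$. For $2\leq k\leq p-1$ the integer $\binom{p}{k}$ is divisible by $p$, hence $w\big(\binom{p}{k}z^k\big)\geq w(p)+k\,w(z)>w(p)+w(z)$ because $w(z)>w(p)\geq 0$ forces $w(z)>0$; and for $k=p$ we have $w(z^p)\geq p\,w(z)>w(p)+w(z)$, again since $(p-1)w(z)\geq w(z)>w(p)$. Thus every term other than $pz$ has strictly larger $w$-value than $w(pz)$, and the ultrametric inequality for $w$ (equality in $w(a+b)\geq\min(w(a),w(b))$ when the values differ) gives $w\big((1+z)^p-1\big)=w(pz)=w(p)+w(z)$.

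The whole argument is a routine Lazard-type computation, so I do not expect a substantive obstacle. The only points that need a little attention are (i) checking that the hypothesis $w(\,\cdot\,)>w(p)$ genuinely survives each step of the induction (which is why one wants $w(p)\geq 0$), and (ii) the non-negativity of $w$ on the integers, which is what makes the inequalities among the binomial terms strict; both follow at once from the filtration axioms together with $\mathcal{O}$-linearity of $w$.
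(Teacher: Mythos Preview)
Your proof is correct and takes a genuinely different route from the paper's. The paper expands $x^{p^m}-1=\sum_{k\geq 1}\binom{p^m}{k}(x-1)^k$ directly and then carries out a rather detailed computation of $v_p\binom{p^m}{k}$ via Legendre's formula and digit sums, splitting into the cases $k=p^m$ and $k<p^m$, in order to show that every term with $k\geq 2$ has $w$-value strictly larger than $m\,w(p)+w(x-1)$. Your argument instead proceeds by induction on $m$, reducing everything to the single step $w((1+z)^p-1)=w(p)+w(z)$ when $w(z)>w(p)$; there the only arithmetic input needed is the trivial fact that $p\mid\binom{p}{k}$ for $1\leq k\leq p-1$. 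This is shorter and avoids the combinatorics of $\binom{p^m}{k}$ entirely. The paper's direct approach, on the other hand, gives slightly more: it exhibits an explicit lower bound for $w\big(\binom{p^m}{k}(x-1)^k\big)$ for every $k$, which could in principle be reused, whereas your inductive reduction discards that finer information. One small terminological slip: when you write ``submultiplicativity plus induction give $w(n)\geq 0$'' you are really using the ultrametric inequality $w(a+b)\geq\min(w(a),w(b))$ together with $w(1)=0$; the argument itself is fine.
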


\begin{proof}

Using the binomial theorem, it is clear that 

\begin{center}
$x^{p^m}-1=(x-1+1)^{p^m}-1=\underset{k\geq 1}{\sum}{\binom{p^m}{k}(x-1)^k}=p^m(x-1)+\underset{k\geq 2}{\sum}{\binom{p^m}{k}(x-1)^k}$
\end{center}

\noindent Clearly $w(p^m(x-1))=w(p^m)+w(x-1)=mw(p)+w(x-1)$ since $w$ is $\mathcal{O}$-linear. So it remains to show that $w(\binom{p^m}{k}(x-1)^k)>mw(p)+w(x-1)$ for all $k\geq 2$.\\

\noindent First, note that $w(\binom{p^m}{p^m}(x-1)^{p^m})=w((x-1)^{p^m})\geq p^mw(x-1)=(p^m-1)w(x-1)+w(x-1)>(p^m-1)w(p)+w(x-1)\geq mw(p)+w(x-1)$.

So from now on, we may assume that $k<p^m$.\\

\noindent Now, $k=a_0+a_1p+\cdots+a_tp^t$ for some integers $0\leq a_i<p$, and since $k\leq p^m-1$ we may assume that $t=m-1$. Define $s(k):=a_1+a_1+\cdots+a_t$.\\

\noindent Recall from \cite[\rom{3} 1.1.2.5]{Lazard} that $v_p(k!)=\frac{k-s(k)}{p-1}$, and hence\\

\noindent $v_p(\binom{p^m}{k})=v_p(p^m!)-v_p(k!)-v_p((p^m-k)!)=\frac{p^m-s(p^m)-k+s(k)-(p^m-k)+s(p^m-k)}{p-1}=\frac{s(k)+s(p^m-k)-s(p^m)}{p-1}$.\\

\noindent Now, let $m\geq i\geq 1$ be maximal such that $a_{m-i}\neq 0$, then $p^m=(p-1)p^{m-i}+(p-1)p^{m-i+1}+\cdots+(p-1)p^{m-1}+p^{m-i}$, so\\ 

\noindent $p^m-k=(p-1-a_{m-i})p^{m-i}+(p-1-a_{m-i+1})p^{m-i+1}+\cdots+(p-1-a_{m-1})p^{m-1}+p^{m-i}$\\

$=(p-a_{m-i})p^{m-i}+(p-1-a_{m-i+1})p^{m-i+1}+\cdots+(p-1-a_{m-1})p^{m-1}$.\\

Hence $s(p^m-k)=(p-a_{m-i})+(p-1-a_{m-i+1})+\cdots+(p-1-a_{m-1})=i(p-1)+1-s(k)$.\\

\noindent Clearly $s(p^m)=1$, so $v_p(\binom{p^m}{k})=\frac{s(k)+s(p^m-k)-s(p^m)}{p-1}=\frac{s(k)+i(p-1)+1-s(k)-1}{p-1}=\frac{i(p-1)}{p-1}=i$.\\

\noindent Also, $k=a_{m-i}p^{m-i}+\cdots+a_{m-1}p^{m-1}=p^{m-i}(a_{m-i}+\cdots+a_{m-1}p^{i-1})$, so $k\geq p^{m-i}\geq m-i+1$ if $i<m$.\\

\noindent Now, $w(\binom{p^m}{k}(x-1)^k)\geq w(\binom{p^m}{k})+kw(x-1)\geq v_p(\binom{p^m}{k})w(p)+kw(x-1)$\\

$=iw(p)+(k-1)w(x-1)+w(x-1)>iw(p)+(k-1)w(p)+w(x-1)$, so:\\

\noindent If $i<m$ then this is at least $iw(p)+(m-i)w(p)+w(x-1)=mw(p)+w(x-1)$, and\\

\noindent if $i=m$ then since $k>1$ we have $iw(p)+(k-1)w(p)+w(x-1)=mw(p)+(k-1)w(p)+w(x-1)\geq mw(p)+w(x-1)$ as required.\end{proof}

\vspace{0.1in}

Recall the following definition (\cite[Definition 3.1]{APB}):

\begin{definition}\label{non-com valuation}

A \emph{non-commutative valuation} on a simple artinian ring $Q$ is a Zariskian filtration $v:Q\to\mathbb{Z}\cup\{\infty\}$ such that if $\widehat{Q}$ is the completion of $R$ with respect to $v$, then $\widehat{Q}\cong M_k(Q(D))$ for some $k\in\mathbb{N}$ and some complete non-commutative DVR $D$, and $v$ is induced by the $J(D)$-adic filtration on $\widehat{Q}$.

\end{definition}

It follows from this definition that if $v$ is a non-commutative valuation on $Q$, then every element of $Z(Q)$ is $v$-regular.

\begin{theorem}\label{valuation}

Let $G$ be a $p$-valuable group, $P$ a faithful prime ideal of $\mathcal{O}G$ with $\pi\notin P$, then there exists a non-commutative valuation $v$ on $Q(\mathcal{O}G/P)$ such that the natural map $\tau:(\mathcal{O}G,w)\to (Q(\mathcal{O}G/P),v)$ is continuous, and there exists $m_1\in\mathbb{N}$ such that for all $g\in G$, $v(\tau(g^{p^{m_1}}-1))>v(p)$.

\end{theorem}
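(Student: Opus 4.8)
The plan is to build the valuation $v$ by transporting structure from the associated graded ring of $\mathcal{O}G$. Recall that $w$ is a complete Zariskian valuation on $\mathcal{O}G$ with commutative polynomial associated graded ring $\mathrm{gr}_w(\mathcal{O}G)$ over $k$. The first step is to pass to the quotient: since $P$ is a prime ideal and $\mathcal{O}G$ is Noetherian, $\mathcal{O}G/P$ inherits the quotient filtration $\bar{w}$, which is again Zariskian, and $\mathrm{gr}_{\bar w}(\mathcal{O}G/P)$ is a Noetherian (graded-)prime affine $k$-algebra — here one invokes that $\mathrm{gr}_w P$ is a graded prime ideal, at least after localising, or more carefully one uses that a faithful prime ideal of an Iwasawa algebra is controlled by a suitable subgroup so that $\mathrm{gr}_{\bar w}$ is a domain (this is the kind of argument made in \cite{APB}, Section 3). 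Next, one forms the Goldie ring of quotients $Q(\mathcal{O}G/P)$, which is simple artinian since $P$ is prime and $\mathcal{O}G/P$ is a Noetherian prime ring; the filtration $\bar w$ extends to a filtration $v$ on $Q(\mathcal{O}G/P)$ by the standard technique of extending a Zariskian valuation to the localisation (see \cite{LVO}), and one checks that the completion $\widehat{Q}$ has the form $M_k(Q(D))$ for a complete non-commutative DVR $D$, so that $v$ is a non-commutative valuation in the sense of Definition~\ref{non-com valuation}. Continuity of $\tau:(\mathcal{O}G,w)\to(Q(\mathcal{O}G/P),v)$ is immediate from the construction, since $v\circ\tau\geq w$ on $\mathcal{O}G$.

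The second, and more delicate, part is the claim that there exists $m_1\in\mathbb{N}$ with $v(\tau(g^{p^{m_1}}-1))>v(p)$ for \emph{all} $g\in G$ simultaneously. The idea is to use the uniformity of the $p$-valuation $\omega$ on $G$. For $g=\underline{g}^{\beta}$ with $\beta\in\mathbb{Z}_p^d$, raising to the $p^m$-th power multiplies the valuation: by Lemma~\ref{value} applied to the $\mathcal{O}$-linear valuation $v$ on $Q(\mathcal{O}G/P)$ (noting $p$ is $v$-regular, being central), once we know $v(\tau(g-1))>v(p)$ we get $v(\tau(g^{p^m}-1))=m\,v(p)+v(\tau(g-1))$, which certainly exceeds $v(p)$. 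But the point is that we want a \emph{uniform} $m_1$ valid even when $v(\tau(g-1))$ is small or when $\tau(g-1)$ is a zero-divisor in the completion. So instead one argues: the set $\{g\in G: v(\tau(g^{p^{m}}-1))>v(p)\}$ contains, for $m$ large enough, all of $G^{p^m}$ by the binomial-expansion estimate of Lemma~\ref{value} combined with the fact that $v\circ\tau\geq w$ and $w(g-1)=\omega(g)\geq\omega(g_i)_{\min}>0$ is bounded below on $G$; thus $w(\tau(g^{p^m}-1))\geq p^m\cdot(\min_i\omega(g_i))$ grows without bound in $m$ uniformly in $g$, as soon as the leading term $p^m(g-1)$ doesn't dominate — and here a small case analysis as in Lemma~\ref{value}, comparing $m\,v(p)+w(g-1)$ against $v(p)$, shows that taking $m_1$ with $p^{m_1}\min_i\omega(g_i)>v(p)$ suffices.

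The main obstacle I expect is the \emph{first} part: verifying that the completion of $Q(\mathcal{O}G/P)$ with respect to the quotient-then-localise filtration genuinely has the matrix-over-complete-DVR form demanded by Definition~\ref{non-com valuation}, rather than something more degenerate. This requires knowing that $\mathrm{gr}_{\bar w}(\mathcal{O}G/P)$ is (at least generically, i.e.\ after inverting homogeneous elements) a domain, or equivalently that $\bar w$ is a genuine valuation on a large enough localisation; for a general faithful prime $P$ this is not automatic and is precisely where one must use the hypothesis that $P$ is faithful together with the control-theoretic results (the fact that $\mathrm{gr}_w P$ is well-behaved for faithful primes, cf.\ \cite{annals}, \cite{APB}). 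The uniform-$m_1$ claim, by contrast, is essentially a bookkeeping exercise with Lemma~\ref{value} once the valuation $v$ is in hand, using that $\omega$ takes a minimum value on the compact group $G$ and that $v$ dominates $w$.
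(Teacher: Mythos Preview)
Your sketch for the existence of $v$ is roughly in the right direction, but you have misdiagnosed the obstacle. The paper dispatches this step in one line by invoking \cite[Theorem~C]{nilpotent}, whose hypotheses are only that $\mathcal{O}G/P$ be a prime Noetherian ring carrying a Zariskian filtration with \emph{commutative, infinite-dimensional} associated graded over $\mathbb{F}_p$; these hold automatically here (infinite-dimensionality uses $\pi\notin P$). Neither faithfulness of $P$ nor the graded quotient being a domain is needed for this step, so the ``main obstacle'' you anticipate simply does not arise.

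For the uniform $m_1$, your argument contains a genuine error and an unjustified assumption. You assert that $w(g^{p^m}-1)\geq p^m\cdot\min_i\omega(g_i)$, but by the axioms of a $p$-valuation one has $w(g^{p^m}-1)=\omega(g^{p^m})=\omega(g)+m$, so the growth in $m$ is linear, not exponential. (You seem half-aware of this, since the expression $m\,v(p)+w(g-1)$ also appears, yet your final choice $p^{m_1}\min_i\omega(g_i)>v(p)$ reverts to the exponential rate.) You also assume $v\circ\tau\geq w$, which is strictly stronger than the continuity that the construction yields, and you do not justify it.

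The paper's route for this second part is both different and simpler. For each fixed $g$, the sequence $g^{p^m}-1$ tends to $0$ in $(\mathcal{O}G,w)$, so bare continuity of $\tau$ produces some $m_g$ with $v(\tau(g^{p^{m_g}}-1))>v(p)$. One then takes $m_1:=\max_i m_{g_i}$ over an ordered basis $\{g_1,\dots,g_d\}$; for arbitrary $g\in G$ one has $g^{p^{m_1}}\in G^{p^{m_1}}$, and since $g^{p^{m_1}}-1$ lies in the augmentation ideal of $\mathcal{O}G^{p^{m_1}}$, which is topologically generated over $\mathcal{O}$ by the elements $g_i^{p^{m_1}}-1$, the inequality $v(\tau(g^{p^{m_1}}-1))>v(p)$ follows. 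No quantitative comparison between $v$ and $w$, and no use of Lemma~\ref{value}, is required.
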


\begin{proof}

Since $\mathcal{O}G/P$ is a prime ring carrying a Zariskian filtration $w$, and the associated graded gr $\mathcal{O}G/P\cong $gr $\mathcal{O}G/gr P$ is a commutative, infinite dimensional $\mathbb{F}_p$-algebra, it follows from \cite[Theorem C]{nilpotent} that there exists a non-commutative valuation $v$ on $Q(\mathcal{O}G/P)$ such that $\tau$ is continuous.

Given $g\in G$, the sequence $g^{p^m}-1$ converges to zero in $\mathcal{O}G$, so since $\tau$ is continuous, the sequence $\tau(g^{p^m}-1)$ converges to zero with respect to $v$, and hence there exists $m_g\in\mathbb{N}$ such that $v(\tau(g^{p^{m_g}}-1))>v(p)$.\\

\noindent So, let $\underline{g}=\{g_1,\cdots,g_d\}$ be an ordered basis for $G$, and let $m_1:=\max\{m_{g_i}:i=1,\cdots,d\}$, then it follows that $v(\tau(g^{p^{m_1}}-1))>v(p)$ for all $g\in G$.\end{proof}

\section{Using Mahler expansions}

In section 2.1, we defined the Mahler expansion of an automorphism $\varphi\in Aut^{\omega}(G)$ as a linear combination in the quantized divided powers $\partial_{\underline{g}}^{\alpha}$. 

In this section, we will analyse convergence of Mahler expansions, and use this to prove our control Theorem \ref{B}.

\subsection{Mahler automorphisms}

We now define a class of automorphisms which have a workable formula for their Mahler coefficients.

\begin{definition}\label{Mahler-aut}

Suppose $\underline{g}=\{g_1,...,g_d\}$ is an ordered basis for $G$, and $\varphi\in Aut^{\omega}(G)$. We say that $\varphi$ is a \emph{Mahler automorphism with respect to $\underline{g}$} if its Mahler coefficients satisfy:\\

\noindent $m_{\alpha}(\varphi)=(\varphi(g_1)g_1^{-1}-1)^{\alpha_1}.....(\varphi(g_d)g_d^{-1}-1)^{\alpha_d}$ for all $\alpha\in\mathbb{Z}_p^d$.

\end{definition}

\noindent It follows from the proof of \cite[Corollary 6.6]{nilpotent} and \cite[Corollary 6.7]{nilpotent} that if $\varphi$ is a Mahler automorphism with respect to $\underline{g}$, then $\varphi=\underset{\alpha\in\mathbb{N}^d}{\sum}{m_{\alpha}(\varphi)}\partial_{\underline{g}}^{(\alpha)}$ as an endomorphism of $\mathcal{O}G$ as we require.\\

\noindent The following proposition simplifies the task of finding Mahler automorphisms. For convenience, we write $\psi(g):=\varphi(g)g^{-1}$ for $g\in G$:

\begin{proposition}\label{conditions}

The following are equivalent:\\

\noindent i. $\varphi$ is a Mahler automorphism with respect to $\underline{g}=\{g_1,\cdots,g_d\}$.

\noindent ii. $\psi(\underline{g}^{\beta})=\prod_{i=1}^{d}{\psi(g_i)^{\beta_i}}$ for all $\beta\in\mathbb{N}^d$.

\noindent iii. $\psi(g_i)$ commutes with $g_j$ for all $j\leq i$.

\end{proposition}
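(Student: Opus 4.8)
The plan is to prove the cycle $i \Rightarrow ii \Rightarrow iii \Rightarrow ii$ (together with $ii \Rightarrow i$), so that all three become equivalent. The implication $i \Rightarrow ii$ should be essentially a matter of unwinding definitions: if $\varphi$ is a Mahler automorphism, then by the remark following Definition \ref{Mahler-aut} we have $\varphi = \sum_{\alpha} m_\alpha(\varphi)\partial_{\underline{g}}^{(\alpha)}$ as an endomorphism of $\mathcal{O}G$, and applying this to $\underline{g}^\beta$ and using $\partial_{\underline{g}}^{(\alpha)}(\underline{g}^\beta) = \binom{\beta}{\alpha}\underline{g}^\beta$ gives
\[
\varphi(\underline{g}^\beta) = \sum_{\alpha\in\mathbb{N}^d} \binom{\beta}{\alpha}\prod_{i=1}^d(\psi(g_i)-1)^{\alpha_i}\,\underline{g}^\beta.
\]
For $\beta\in\mathbb{N}^d$ the sum is finite and, by the multivariable Vandermonde/binomial identity $\sum_{\alpha\le\beta}\binom{\beta}{\alpha}\prod_i(\psi(g_i)-1)^{\alpha_i} = \prod_i(1 + (\psi(g_i)-1))^{\beta_i} = \prod_i \psi(g_i)^{\beta_i}$, we get $\varphi(\underline{g}^\beta) = \left(\prod_i\psi(g_i)^{\beta_i}\right)\underline{g}^\beta$, i.e. $\psi(\underline{g}^\beta) = \prod_i\psi(g_i)^{\beta_i}$, which is $ii$. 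Conversely $ii \Rightarrow i$ should follow by reversing this: condition $ii$ pins down $f(\beta) = \psi(\underline{g}^\beta) = \prod_i\psi(g_i)^{\beta_i}$ on $\mathbb{N}^d$, hence by density on all of $\mathbb{Z}_p^d$, and then computing the Mahler coefficients $m_\alpha(f)$ of this explicit function via the formula in Theorem \ref{Mahler} reproduces $\prod_i(\psi(g_i)-1)^{\alpha_i}$ — again a binomial-coefficient manipulation, treating the $\psi(g_i)$ as commuting variables (which is legitimate because $\psi(g_i)\psi(g_j)$ need not be addressed here, only the formal expansion of $f$).

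The heart of the argument is the equivalence $ii \Leftrightarrow iii$. For $iii \Rightarrow ii$ I would argue by induction on $|\beta| = \beta_1+\cdots+\beta_d$, or more precisely by a double induction: first on $\beta_1$, then $\beta_2$, and so on. The key computation is the cocycle identity for $\psi$: since $\varphi$ is a homomorphism, $\psi(gh) = \varphi(gh)(gh)^{-1} = \varphi(g)\varphi(h)h^{-1}g^{-1} = \varphi(g)\psi(h)g^{-1} = \psi(g)\,(g\psi(h)g^{-1})$. So $\psi(gh) = \psi(g)\cdot{}^{g}\psi(h)$. Now suppose $\psi(g_i)$ commutes with all $g_j$ for $j\le i$. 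Writing $\underline{g}^\beta = g_1^{\beta_1}\cdots g_d^{\beta_d}$ and peeling off factors from the left using the cocycle identity, each time I move a $\psi(g_i)$ past a power of $g_j$ with $j < i$ (these are the conjugations ${}^{g}\psi(h)$ that arise), condition $iii$ lets me do so freely; and the self-conjugation ${}^{g_i^{k}}\psi(g_i) = \psi(g_i)$ also follows from $iii$ (taking $j=i$). Carefully bookkeeping the order of the factors $g_j$ that appear in front of each $\psi(g_i)$, one checks inductively that all the conjugations are trivial and the product telescopes to $\prod_i\psi(g_i)^{\beta_i}$, giving $ii$. For the reverse $ii \Rightarrow iii$: apply $ii$ with $\beta = e_i + e_j$ for $j < i$ in two ways. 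On one hand $\psi(g_ig_j)$ — wait, one must be careful because $\underline{g}^\beta$ with $\beta = e_i+e_j$, $j<i$, means $g_j g_i$ in the ordered-basis convention, not $g_ig_j$. Comparing $\psi(g_j g_i) = \psi(g_j)\,{}^{g_j}\psi(g_i)$ against the claimed value $\psi(g_j)\psi(g_i)$ forces ${}^{g_j}\psi(g_i) = \psi(g_i)$, i.e. $\psi(g_i)$ commutes with $g_j$; and $\psi(g_i)$ commutes with $g_i$ itself by the same identity applied to $\beta = 2e_i$. This yields $iii$.

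The main obstacle I anticipate is the bookkeeping in $iii \Rightarrow ii$: the ordered-basis convention means $\underline{g}^\beta = g_1^{\beta_1}g_2^{\beta_2}\cdots g_d^{\beta_d}$, so when I expand $\psi(\underline{g}^\beta)$ via the cocycle identity, a factor $\psi(g_i)$ emerging from the $g_i^{\beta_i}$ block gets conjugated by the entire prefix $g_1^{\beta_1}\cdots g_{i-1}^{\beta_{i-1}}$ (and by the lower powers of $g_i$ in its own block). The hypothesis $iii$ is exactly tailored so that $\psi(g_i)$ commutes with $g_1,\dots,g_i$, hence with this entire prefix — so every conjugation is trivial — but writing the induction cleanly, and being sure I have not quietly needed $\psi(g_i)$ to commute with some $g_j$ for $j > i$, requires care. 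One also needs that the finite-$\beta$ identity $ii$ really does suffice for the Mahler-coefficient computation in $ii \Rightarrow i$, i.e. that the continuous function $\beta\mapsto\prod_i\psi(g_i)^{\beta_i}$ on $\mathbb{Z}_p^d$ is determined by its restriction to $\mathbb{N}^d$ and has the stated Mahler coefficients; this is where I would invoke density of $\mathbb{N}^d$ in $\mathbb{Z}_p^d$ and continuity of $p$-adic exponentiation together with Theorem \ref{Mahler}.
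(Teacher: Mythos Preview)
Your proposal is correct and follows essentially the same logical structure as the paper: the equivalence $(i)\Leftrightarrow(ii)$ is binomial inversion via Mahler's theorem, and $(ii)\Leftrightarrow(iii)$ is the cocycle identity $\psi(gh)=\psi(g)\cdot{}^{g}\psi(h)$. The one presentational difference worth noting is in $(i)\Rightarrow(ii)$: the paper inverts the defining relation $m_{\alpha}=\sum_{\beta\le\alpha}(-1)^{\alpha-\beta}\binom{\alpha}{\beta}\psi(\underline{g}^{\beta})$ by an explicit double induction (first on a single index, then on the number of indices), whereas your route---apply Mahler's reconstruction formula to $f(\beta)=\psi(\underline{g}^{\beta})$ and then collapse the finite sum via the ordered binomial expansion $\sum_{\alpha\le\beta}\binom{\beta}{\alpha}\prod_i(\psi(g_i)-1)^{\alpha_i}=\prod_i\psi(g_i)^{\beta_i}$---is shorter and equally valid (the expansion needs no commutativity, only that the factors are kept in order). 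Conversely, the paper dismisses $(iii)\Rightarrow(ii)$ as ``clear from the definition of $\psi$'' where you have correctly supplied the cocycle bookkeeping.
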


\begin{proof}

(ii$\implies$ i) Given by the proof of \cite[Lemma 6.7]{nilpotent}\\

\noindent (i$\implies$ ii) Suppose that for all $\alpha\in\mathbb{N}^d$:\\

\noindent $m_{\alpha}(\varphi)=\underset{\beta\leq\alpha}{\sum}(-1)^{\alpha-\beta}\binom{\alpha}{\beta}\psi(g_1^{\beta_1}g_2^{\beta_2}\cdots g_d^{\beta_d})=(\psi(g_1)-1)^{\alpha_1}(\psi(g_2)-1)^{\alpha_2}\cdots(\psi(g_d)-1)^{\alpha_d}$.\\

\noindent Then given $i=1,\cdots,d$, suppose that $\beta_j=0$ for all $j\neq i$. Clearly if $\beta_i=0,1$ then $\psi(g_i^{\beta_i})=\psi(g_i)^{\beta_i}$, so suppose, for induction that $\psi(g_i^{k})=\psi(g_i)^{k}$ for all $0\leq k<\beta_i$. Then:\\

\noindent $\underset{k\leq\beta_i}{\sum}{(-1)^{\beta_i-k}\binom{\beta_i}{k}\psi(g_i^{k})}=\underset{k<\beta_i}{\sum}{(-1)^{\beta_i-k}\binom{\beta_i}{k}\psi(g_i)^{k}+\psi(g_i^{\beta_i})}=(\psi(g_i)-1)^{\beta_i}$\\

\noindent $=\underset{k\leq\beta_i}{\sum}{\binom{\beta_i}{k}(-1)^{\beta_i-k}\psi(g_i)^k}$ using binomial expansion, hence $\psi(g_i^{\beta_i})=\psi(g_i)^{\beta_i}$.\\

\noindent So, we have shown that $\psi(g_i^{\beta_i})=\psi(g_i)^{\beta_i}$ for all $i=1,\cdots,d$, $\beta_i\in\mathbb{Z}_p$.\\

Now suppose that for $i=1,\cdots,m$, $\psi(g_1^{\beta_1}\cdots g_m^{\beta_m})=\psi(g_1)^{\beta_1}\cdots\psi(g_m)^{\beta_m}$ for some $1\leq m<d$. We have proved this for $m=1$, so we will now show that it holds for $m+1$ and apply induction.\\

So now suppose that for all $k<\beta_{m+1}$, $\psi(g_1^{\beta_1}\cdots g_m^{\beta_m}g_{m+1}^k)=\psi(g_1^{\beta_1}\cdots g_m^{\beta_m})\psi(g_{m+1})^k$. It is clear that this holds for $\beta_{m+1}=1$, so we will apply a second induction.\\

\noindent We know that $\underset{\gamma\leq\beta}{\sum}{(-1)^{\beta-\gamma}\binom{\beta}{\gamma}\psi(g_1^{\gamma_1}\cdots g_{m+1}^{\gamma_{m+1}})}$

\noindent $=\sum_{k=0}^{\beta_{m+1}}\underset{\gamma\leq\beta}{\sum}{(-1)^{\beta-\gamma}\binom{\beta}{\gamma}(-1)^{\beta_{m+1}-k}\binom{\beta_{m+1}}{k}\psi(g_1^{\gamma_1}\cdots g_m^{\gamma_m}g_{m+1}^k)}$

\noindent $=(\psi(g_1)-1)^{\beta_1}(\psi(g_2)-1)^{\beta_2}\cdots \psi(g_{m+1})-1)^{\beta_{m+1}}$.\\

\noindent So expanding out $(\psi(g_{m+1})-1)^{\beta_{m+1}}$ and applying the first induction gives that:\\

\noindent $\underset{\gamma\leq\beta}{\sum}{(-1)^{\beta-\gamma}\binom{\beta}{\gamma}\psi(g_1)^{\gamma_1}\cdots (g_m)^{\gamma_m}\sum_{k=0}^{\beta_{m+1}}(-1)^{\beta_{m+1}-k}\binom{\beta_{m+1}}{k}\psi(g_{m+1})^k}$\\

\noindent $=\sum_{k=0}^{\beta_{m+1}}\underset{\gamma\leq\beta}{\sum}{(-1)^{\beta-\gamma}(-1)^{\beta_{m+1}-k}\binom{\beta}{\gamma}\binom{\beta_{m+1}}{k}\psi(g_1^{\gamma_1}\cdots g_m^{\gamma_m}g_{m+1}^k)}$.\\

\noindent So applying the second induction gives $\psi(g_1^{\beta_1}\cdots g_{m+1}^{\beta_{m+1}})=\psi(g_1)^{\beta_1}\cdots \psi(g_{m+1})^{\beta_{m+1}}$, so we are done.\\

\noindent (ii$\implies$ iii) Given $i\leq j$, we have that $\psi(g_ig_j)=\psi(g_i)\psi(g_j)$, so by the definition of $\psi$, $\varphi(g_ig_j)(g_ig_j)^{-1}=\varphi(g_i)g_i^{-1}\varphi(g_j)g_j^{-1}$.\\

\noindent Therefore, $\varphi(g_i)\varphi(g_j)g_j^{-1}g_i^{-1}=\varphi(g_i)g_i^{-1}\varphi(g_j)g_j^{-1}$, so $\varphi(g_j)g_j^{-1}g_i^{-1}=g_i^{-1}\varphi(g_j)g_j^{-1}$, i.e. $g_i^{-1}\psi(g_j)=\psi(g_j)g_i^{-1}$.\\

\noindent So $\psi(g_j)$ commutes with $g_i^{-1}$, and hence it commutes with $g_i$.\\

\noindent (iii$\implies$ ii) This is clear from the definition of $\psi$.\end{proof}

\noindent\textbf{\underline{Examples:}} 1. Suppose that $\varphi\in Aut^{\omega}Z(G)$ is trivial mod centre, i.e. $\varphi(g)g^{-1}\in Z(G)$ for all $g\in G$. Then for any ordered basis $\underline{g}=\{g_1,\cdots,g_d\}$, $\varphi(g_i)g_i^{-1}$ is central, and so commutes with $g_1,\cdots,g_i$.\\

\noindent So by the proposition, $\varphi$ is a Mahler automorphism with respect to any basis. In fact, the proposition shows that if $\varphi$ is a Mahler automorphism with respect to any basis, then $\varphi$ is trivial mod centre. But it is possible for $\varphi$ to be a Mahler automorphism with respect to \emph{some} basis, and yet not be trivial mod centre.\\

\noindent 2. Recall from \cite{APB} that $G$ is \emph{abelian-by-procyclic} if $G\cong\mathbb{Z}_p^d\rtimes\mathbb{Z}_p$. Then if we take $\varphi\in Inn(G)$ to be the automorphism defining the action of $\mathbb{Z}_p$ on $\mathbb{Z}_p^d$, then $\varphi$ is a Mahler automorphism with respect to any basis of the form $\{h_1,\cdots,h_d,X\}$, where $\{h_1,\cdots,h_d\}$ is a basis for $\mathbb{Z}_p^d$.

Note that in this case, if $G$ is not nilpotent, or has nilpotency class greater than 2, then $\varphi$ will not be trivial mod centre.\\

\noindent Since we want $\varphi$ to be a Mahler automorphism with respect to any basis, we will assume from now on that $\varphi$ is trivial mod centre. However, it should be noted that to prove general control theorems for Iwasawa algebras in characteristic $p$ for non-nilpotent groups, the theory of general Mahler automorphisms is highly useful, as demonstrated by the approach in \cite{APB}, so this theory should be developed further.

\subsection{Approximations}

From now on, we will fix $P$ a faithful, prime ideal of $\mathcal{O}G$ such that $\pi\notin P$, $\varphi\in Aut^{\omega}(G)$ an automorphism, trivial mod centre, $\varphi\neq id$, and we assume that $\varphi(P)=P$.

Using Theorem \ref{valuation}, we fix a non-commutative valuation $v$ on $Q(\mathcal{O}G/P)$, and let $\tau:\mathcal{O}G\to Q(\mathcal{O}G/P)$ be the natural map, which is continuous.\\

\noindent Recall from \cite[Proposition 4.9]{nilpotent} that if we define $z(\varphi):G\to Z(G),g\mapsto\underset{m\rightarrow\infty}{\lim}{(\varphi^{p^m}(g)g^{-1})^{p^{-m}}}$, then $z(\varphi)$ is a group homomorphism, $z(\varphi^{p^m})(g)=z(\varphi)(g)^{p^m}$ for all $g\in G$, and:

\begin{center}
$\varphi^{p^m}(g)g^{-1}\equiv z(\varphi)(g)^{p^m}$ ($mod$ $Z(G)^{p^{2m}}$) for all $m\in\mathbb{N}$.
\end{center}

\noindent Using Theorem \ref{valuation}, choose an integer $m_1>0$ such that for all $Z\in Z(G)$, $v(\tau(Z^{p^{m_1}}-1))>v(p)$, and from now on, fix $z:=z(\varphi^{p^{m_1}}):G\to Z(G)$.\\ 

\noindent Set $\lambda:=\inf\{v(\tau(z(g)-1):g\in G\}$, then since $P$ is faithful and $\varphi\neq id$, the proof of \cite[Lemma 7.5]{nilpotent} shows that $\lambda<\infty$. Furthermore, since $z(g)=z(\varphi^{p^{m_1}})(g)=z(\varphi)(g)^{p^{m_1}}$, it follows from our choice of $m_1$ that $\lambda>v(p)$.\\

\noindent Now, for any ordered basis $\underline{g}=\{g_1,\cdots,g_d\}$ for $G$, set $q_{i,m}:=\tau(z(g_i)^{p^m}-1)$ for each $i\leq d$, $m\in\mathbb{N}$. We write $\underline{q}_m^{\alpha}:=q_{1,m}^{\alpha_1}q_{2,m}^{\alpha_2}\cdots q_{d,m}^{\alpha_d}$ as usual, and it is clear that $v(q_{i,0})=\lambda$ for some $i$.

Moreover, if $v(q_{i,0})=\lambda$ then $v(q_{i,m})=v(\tau(z(g_i)^{p^m}-1))=\lambda+mv(p)$ by Lemma \ref{value}.\\

\noindent The following result is analogous to \cite[Proposition 7.7]{nilpotent}:

\begin{proposition}\label{approx}

For any $\alpha\in\mathbb{N}^d$ with $\alpha\neq 0$, we have that for sufficiently high $m\in\mathbb{N}$:

\begin{center}
$v(\tau(m_{\alpha}(\varphi^{p^m}))-\underline{q}_m^{\alpha})\geq (2m-m_1)v(p)+(\vert\alpha\vert-1)\lambda$
\end{center}

Where $\vert\alpha\vert:=\alpha_1+\cdots+\alpha_d$.

\end{proposition}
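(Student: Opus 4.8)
The plan is to estimate the Mahler coefficients $m_{\alpha}(\varphi^{p^m})$ by comparing them with the "expected" values $\underline{q}_m^{\alpha}$ coming from the central homomorphism $z = z(\varphi^{p^{m_1}})$. The key input is the congruence recalled from \cite[Proposition 4.9]{nilpotent}: for every $g\in G$ and every $m\in\mathbb{N}$,
\begin{equation*}
\varphi^{p^m}(g)g^{-1}\equiv z(\varphi^{p^{m-m_1}})(g)^{\,?}\pmod{Z(G)^{p^{\,?}}},
\end{equation*}
more precisely $\varphi^{p^m}(g)g^{-1}\equiv z(g)^{p^{m-m_1}}$ modulo a high power of $Z(G)$ once $m\geq m_1$, so that $\psi_m(g):=\varphi^{p^m}(g)g^{-1}$ differs from $z(g_i)^{p^{m-m_1}}$ on basis elements $g_i$ by an element of $Z(G)^{p^{2(m-m_1)}}$, whose image under $\tau$ has valuation at least $2(m-m_1)v(p)+\lambda \ge (2m-m_1)v(p)+\lambda$ by Lemma \ref{value} and the definition of $\lambda$. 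First I would record the explicit Mahler formula: since $\varphi$ is trivial mod centre, by Proposition \ref{conditions} it is a Mahler automorphism with respect to \emph{any} ordered basis, hence so is each $\varphi^{p^m}$, and therefore
\begin{equation*}
m_{\alpha}(\varphi^{p^m}) = (\psi_m(g_1)-1)^{\alpha_1}\cdots(\psi_m(g_d)-1)^{\alpha_d}.
\end{equation*}
Meanwhile $\underline{q}_m^{\alpha}$ is the analogous product with $\psi_m(g_i)-1$ replaced by $q_{i,m-m_1}=\tau(z(g_i)^{p^{m-m_1}}-1)$ — note the shift, so I should be slightly careful about whether the proposition is stated with $q_{i,m}$ or $q_{i,m-m_1}$ and align the bookkeeping accordingly; writing everything in terms of $m' = m - m_1$ will keep things clean.

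The heart of the argument is then a telescoping estimate. Writing $a_i := \tau(\psi_m(g_i)-1)$ and $b_i := \tau(z(g_i)^{p^{m-m_1}}-1)$, I have $v(a_i - b_i) \geq (2m - m_1)v(p) + \lambda$ from the congruence above (the image of $\psi_m(g_i)z(g_i)^{-p^{m-m_1}}-1$, an element of $\tau(Z(G)^{p^{2(m-m_1)}}-1)$, has valuation $\geq 2(m-m_1)v(p) + \lambda$, and one adjusts by the relation $\psi_m(g_i)-1 = (z(g_i)^{p^{m-m_1}}-1) + z(g_i)^{p^{m-m_1}}(\psi_m(g_i)z(g_i)^{-p^{m-m_1}}-1)$, using that $z(g_i)^{p^{m-m_1}}$ is a unit of valuation $0$). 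Also $v(a_i), v(b_i) \geq \lambda$ for each $i$, with equality for at least one $i$. Now expand the difference $\prod a_i^{\alpha_i} - \prod b_i^{\alpha_i}$ as a telescoping sum: replace one factor of $a$ by $b$ at a time, so each term is a product in which exactly one factor is a difference $a_j - b_j$ and the remaining $\vert\alpha\vert - 1$ factors are various $a$'s and $b$'s, each of valuation $\geq \lambda$. Hence each term has valuation $\geq (2m-m_1)v(p)+\lambda + (\vert\alpha\vert-1)\lambda = (2m-m_1)v(p) + \vert\alpha\vert\lambda$, which comfortably exceeds the claimed bound $(2m-m_1)v(p) + (\vert\alpha\vert-1)\lambda$ — so in fact the bound is not tight, which gives room to absorb the commutativity/ordering subtleties below.

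There are two points requiring care that I expect to be the main obstacles. First, $Q(\mathcal{O}G/P)$ is noncommutative, so rearranging the factors $a_i, b_i, a_j-b_j$ in the telescoping sum is not automatic; however, since $z(g_i) \in Z(G)$, its image $\tau(z(g_i)^{p^{m-m_1}}-1) = q_{i,m-m_1}$ is \emph{central} in $Q(\mathcal{O}G/P)$, so the $b_i$ commute with everything, and only the $a_i$ need to be kept in their proper order — which matches the ordered product defining $m_{\alpha}(\varphi^{p^m})$. Second, I must justify that the congruence from \cite[Proposition 4.9]{nilpotent} combined with Lemma \ref{value} genuinely yields the valuation lower bound $\geq 2(m-m_1)v(p) + \lambda$ for $\tau$ of an element of $Z(G)^{p^{2(m-m_1)}} - 1$: this is where Lemma \ref{value} is used, applied to $x \in \tau(Z(G))$ with $v(x-1) \geq \lambda > v(p)$, giving $v(x^{p^k}-1) = kv(p) + v(x-1) \geq kv(p) + \lambda$ with $k = 2(m-m_1)$, and one needs $\lambda > v(p)$ (established earlier from the choice of $m_1$) for the lemma to apply. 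Finally I would remark that "sufficiently high $m$" is needed only to ensure $m \geq m_1$ (so that $z(\varphi^{p^{m-m_1}})$ makes sense and the congruence has the stated form) and perhaps to guarantee some $q_{i,m-m_1}$ realises the infimum $\lambda$ exactly, which follows from Lemma \ref{value} once $m \geq m_1$.
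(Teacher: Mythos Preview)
Your approach is exactly the paper's: write $m_{\alpha}(\varphi^{p^m})$ as the ordered product $(\psi_m(g_1)-1)^{\alpha_1}\cdots(\psi_m(g_d)-1)^{\alpha_d}$ via the Mahler-automorphism property, compare each factor to its central approximation coming from $z$, and observe that the difference $\prod a_i^{\alpha_i}-\prod b_i^{\alpha_i}$ expands as a sum of words of length $|\alpha|$ each containing at least one $a_i-b_i$; the paper phrases this as ``write $a_i=b_i+\delta_i$ and expand'' rather than ``telescope'', but it is the same computation, and your remark that the $b_i$ are central (so ordering is only an issue for the $a_i$) is precisely what makes the expansion clean. You also correctly spotted the $m$ versus $m-m_1$ shift: the paper's own proof works with $z(g_i)^{p^{m-m_1}}$ in place of $z(g_i)^{p^m}$, so your instinct to track this carefully is right.

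There are two bookkeeping slips in your estimates for $v(a_i-b_i)$ that you should fix. First, the congruence from \cite[Proposition 4.9]{nilpotent} gives $\psi_m(g_i)\,z(g_i)^{-p^{m-m_1}}\in Z(G)^{p^{2m}}$, not $Z(G)^{p^{2(m-m_1)}}$: the exponent $2m$ comes from applying the congruence to $\varphi$ at level $p^m$, and the shift by $m_1$ only enters when rewriting $z(\varphi)(g_i)^{p^m}$ as $z(g_i)^{p^{m-m_1}}$. Second, for an arbitrary $\epsilon\in Z(G)$ you cannot assert $v(\tau(\epsilon-1))\geq\lambda$, since $\lambda$ is the infimum only over the image of $z$; what you do know, by the choice of $m_1$, is $v(\tau(\epsilon^{p^{m_1}}-1))>v(p)$. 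Applying Lemma~\ref{value} to $x=\tau(\epsilon^{p^{m_1}})$ with exponent $p^{2m-m_1}$ then yields $v(\tau(\epsilon^{p^{2m}}-1))\geq(2m-m_1)v(p)$, so $v(a_i-b_i)\geq(2m-m_1)v(p)$, \emph{without} the extra $+\lambda$ you claimed. This is exactly tight for the proposition: combined with $v(a_j),v(b_j)\geq\lambda$ for the remaining $|\alpha|-1$ factors, each word has valuation $\geq(2m-m_1)v(p)+(|\alpha|-1)\lambda$, which is the stated bound rather than one exceeding it.
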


\begin{proof}

We have that $m_{\alpha}(\varphi^{p^m})=(\varphi^{p^m}(g_1)g_1^{-1}-1)^{\alpha_1}\cdots(\varphi^{p^m}(g_d)g_d^{-1}-1)^{\alpha_d}$ since $\varphi^{p^m}$ is a Mahler automorphism, so let $\delta_{i,m}:=\tau(\varphi^{p^m}(g_i)g_i^{-1}-1)-q_{i,m}$, then:

\begin{center}
$\tau(m_{\alpha}(\varphi^{p^m}))=(\delta_{1,m}+q_{1,m})^{\alpha_1}\cdots (\delta_{d,m}+q_{d,m})^{\alpha_d}$
\end{center}

\noindent Hence $m_{\alpha}(\varphi^{p^m})-\underline{q}_m$ is a sum of words in $q_{i,m}$ and $\delta_{i,m}$ of length $\vert\alpha\vert$, each containing at least one $\delta_{i,m}$.\\

\noindent So, it remains to show that $v(\delta_{i,m})\geq (2m-m_1)v(p)$ for each $i,m$, and the result follows.\\

\noindent Since $\varphi^{p^{m+m_1}}(g_i)g_i^{-1}\equiv z(g_i)^{p^{m}}$ ($mod$ $Z(G)^{p^{2(m+m_1)}}$), if we assume that $m\geq m_1$ we have that $\varphi^{p^m}(g_i)g_i^{-1}=z(g_i)^{p^{m-m_1}}\epsilon_{i,m}^{p^{2m}}$ for some $\epsilon_{i,m}\in Z(G)$. Hence:

\begin{center}
$\delta_{i,m}=\tau(z(g_i)^{p^{m-m_1}}\epsilon_{i,m}^{p^{2m}}-1)-\tau(z(g_i)^{p^{m-m_1}}-1)=\tau(z(g_i)^{p^{m-m_1}})\tau(\epsilon_{i,m}^{p^{2m}}-1)$
\end{center}

Since $\epsilon_{i,m}\in Z(G)$ and $m>m_1$, we have that $v(\tau(\epsilon_{i,m}^{p^{m_1}}-1))\geq v(p)$, so by Lemma \ref{value}:

\begin{center}
$v(\tau(\epsilon_{i,m}^{p^{2m}}-1))=(2m-m_1)v(p)+v(\tau(\epsilon_{i,m}^{p^{m_1}}-1))$
\end{center}

\noindent Hence $v(\delta_{i,m})\geq (2m-m_1)v(p)$ as required.\end{proof}

\vspace{0.2in}

\noindent Now, we know that $\tau\varphi^{p^m}=\underset{\alpha\in\mathbb{N}^d}{\sum}{\tau(m_{\alpha}(\varphi^{p^m}))\tau\partial_{\underline{g}}^{(\alpha)}}$, so using the proposition, we set $\varepsilon_m:=\underset{\alpha\neq 0}{\sum}{(\tau(m_{\alpha}(\varphi^{p^m}))-\underline{q}_m^{\alpha}})\tau\partial_{\underline{g}}^{(\alpha)}$, and we have that $v(\varepsilon_m(y))\geq (2m-m_1)v(p)$ for all $y\in\mathcal{O}G$, and:

\begin{equation}\label{Mahler2}
\tau\varphi^{p^m}-\tau=\underset{\alpha\neq 0}{\sum}{\underline{q}_m^{\alpha}\tau\partial_{\underline{g}}^{(\alpha)}}+\varepsilon_m
\end{equation}

\noindent Now, for each $i=1,\cdots,d$, set $\partial_i:=\partial_{\underline{g}}^{(e_i)}$, where $e_i$ is the standard $i$'th basis vector. Then since $\varphi(P)=P$, we have that for any $y\in P$, $(\tau\varphi^{p^m}-\tau)(y)=0$, so:

\begin{equation}\label{Mahler3}
0=q_{1,m}\tau\partial_1(y)+\cdots+q_{d,m}\tau\partial_{d}(y)+\underset{\vert\alpha\vert\geq 2}{\sum}{\underline{q}_m^{\alpha}\tau\partial_{\underline{g}}^{(\alpha)}(y)}+\varepsilon_m(y)
\end{equation}

\noindent We want to analyse convergence of this expression, from which we hope to deduce a control theorem.

\subsection{Drawbacks in characteristic 0}

So far, these techniques are identical to the methods oulined and used in \cite{nilpotent} and \cite{APB}, but in both these cases, the underlying ring had characteristic $p$. When working in characteristic zero, potentially fatal problems arise.\\

\noindent The key issue is the linear growth rate of the Mahler approximations $q_{i,m}=\tau(z(g_i)^{p^m}-1)$, as described in Lemma \ref{value}, which arises due to our assumption that $\pi\notin P$. Suppose we assume instead that $\pi\in P$, and hence $Q(\mathcal{O}G/P)$ has characteristic $p$. In this case: 

\begin{center}
$q_{i,m}:=\tau(z(g_i)^{p^m}-1)=\tau(z(g_i)-1)^{p^m}$ for each $i$.
\end{center}

\noindent Therefore, $v(q_{i,m})=p^m\lambda$, and hence $v(\underline{q}_m^{\alpha})\geq p^m\vert\alpha\vert\lambda$, so the Mahler approximations grow exponentially with $m$. In this case we may divide out our expression (\ref{Mahler3}) by an expression involving only the lower order terms $q_{i,m}$, and this will not affect the convergence of the higher order terms.

Using this approach, we can deduce that $\tau\partial_i(y)=0$ for some $i$, and a control theorem will follow. This is outlined in full detail in \cite[Section 7]{nilpotent}.\\

\noindent But in our case, we have that $\pi\notin P$, and hence $Q(\mathcal{O}G/P)$ has characteristic 0, and we see that $v(q_{m}^{\alpha})\geq m\vert\alpha\vert v(p)+\vert\alpha\vert\lambda$ using Lemma \ref{value}, and equality holds for for certain values of $\alpha$.

This means that the Mahler approximations grow linearly with $m$, and it is not possible to remove all the lower order terms in expression (\ref{Mahler3}) without affecting convergence of the higher order terms.\\

\noindent We will now explore particular instances when we can deduce a convergence argument. These instances occur when we impose restrictions on the ideal $P$ to ensure that the centre is as small as possible, and at present they seem like the strongest we can obtain from Mahler expansions in characteristic zero.

\subsection{Using Compactness}

From now on, we will assume, as in the statement of Theorem \ref{B}, that we can find a closed, isolated subgroup $A$ of $Z(G)$ such that $\mathcal{O}A/\mathcal{O}A\cap P$ is finitely generated over $\mathcal{O}$ for some $n\in\mathbb{N}$, and we will suppose that $\varphi(g)g^{-1}\in A$ for all $g\in G$.\\

\noindent Then since $\varphi^{p^{m+m_1}}(g)g^{-1}\in A$ for all $g\in G$ and $A$ is isolated, it follows that $(\varphi^{p^{m+m_1}}(g)g^{-1})^{p^{-m}}\in A$ for all $m$. So since $A$ is closed, $z(g)=\underset{m\rightarrow\infty}{\lim}{(\varphi^{p^{m+m_1}}(g)g^{-1})^{p^{-m}}}\in A$, and thus for all $m\in\mathbb{N}$:

\begin{center}
$q_{i,m}=\tau(z(g_i)^{p^m}-1)\in \mathcal{O}A/\mathcal{O}A\cap P$.
\end{center}

\noindent Now, fix $R:=\mathcal{O}A/\mathcal{O}A\cap P$, then $R$ is a subring of the $K$-algebra $Q(\mathcal{O}G/P)$. So let $F$ be the $K$-span of $R$, then since $R$ is a domain, finitely generated over $\mathcal{O}$, $F$ is a finite field extension of $K$, and clearly $F\subseteq Z(Q(\mathcal{O}G/P))$.

So, let $\mathcal{V}$ be the valuation ring for $F$, and let $\mu\in\mathcal{V}$ be a uniformiser. Then since $v|_F$ is a valuation on $F$, it follows that $v|_F$ is a scalar multiple of the standard $\mu$-adic valuation on $F$, and hence $\mathcal{V}=\{\beta\in F:v(\beta)>0\}$.\\

\noindent Now, choose $i$ such that $q_{i,0}$ has value $\lambda$, we may assume without loss of generality that $i=1$. Then since $\lambda>v(p)$, $v(q_{1,m})=mv(p)+\lambda$ for all $m$ by Lemma \ref{value}.\\

\noindent Now, the key property of $\mathcal{V}$ which we can exploit is compactness, which implies that any sequence in $\mathcal{V}$ has a convergent subsequence. Using this notion, we obtain the following result:

\begin{proposition}\label{convergence}

For any ordered basis $\underline{g}=\{g_1,\cdots,g_d\}$, set $q_{i,m}:=\tau(z(g_i)^{p^m}-1)$ with $v(q_{1,0})=\lambda$. Then for each $i=1,\cdots,d$, the sequence $q_{1,m}^{-1}q_{i,m}$ has a subsequence converging to some $\beta_i\in\mathcal{V}$ as $m\rightarrow\infty$, and $(\beta_1\tau\partial_1+\cdots+\beta_d\tau\partial_d)(P)=0$.

\end{proposition}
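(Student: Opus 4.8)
\textbf{Proof proposal for Proposition \ref{convergence}.}

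The plan is to combine the equality (\ref{Mahler3}) with the compactness of $\mathcal{V}$ and the approximations from Proposition \ref{approx}. First I would rewrite (\ref{Mahler3}) for a fixed $y\in P$ by dividing through by $q_{1,m}$, which is legitimate since $q_{1,m}$ lies in the central subfield $F$ and is $v$-regular with $v(q_{1,m})=mv(p)+\lambda<\infty$. This yields
\begin{equation}\label{divided}
0=\tau\partial_1(y)+q_{1,m}^{-1}q_{2,m}\tau\partial_2(y)+\cdots+q_{1,m}^{-1}q_{d,m}\tau\partial_d(y)+q_{1,m}^{-1}\underset{\vert\alpha\vert\geq 2}{\sum}{\underline{q}_m^{\alpha}\tau\partial_{\underline{g}}^{(\alpha)}(y)}+q_{1,m}^{-1}\varepsilon_m(y).
\end{equation}
The point of the division is that $v(q_{1,m}^{-1}q_{i,m})=v(q_{i,m})-v(q_{1,m})\geq 0$ (by minimality of $\lambda=v(q_{1,0})$ and Lemma \ref{value}, since $v(q_{i,m})=v(q_{i,0})+mv(p)\geq\lambda+mv(p)$), so each coefficient $q_{1,m}^{-1}q_{i,m}$ lands in $\mathcal{V}$.

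Next I would show the error terms in (\ref{divided}) vanish in the limit. For the $\varepsilon_m(y)$ term: by the discussion after Proposition \ref{approx}, $v(\varepsilon_m(y))\geq (2m-m_1)v(p)$, so $v(q_{1,m}^{-1}\varepsilon_m(y))\geq (2m-m_1)v(p)-mv(p)-\lambda=(m-m_1)v(p)-\lambda\to\infty$. For the higher-order sum: $v(q_{1,m}^{-1}\underline{q}_m^{\alpha}\tau\partial_{\underline{g}}^{(\alpha)}(y))\geq v(\underline{q}_m^{\alpha})-mv(p)-\lambda+v(\tau\partial_{\underline{g}}^{(\alpha)}(y))$, and since $v(\underline{q}_m^{\alpha})\geq m\vert\alpha\vert v(p)+\vert\alpha\vert\lambda$ with $\vert\alpha\vert\geq 2$, this is $\geq (\vert\alpha\vert-1)mv(p)+(\vert\alpha\vert-1)\lambda+v(\tau\partial_{\underline{g}}^{(\alpha)}(y))\geq mv(p)+\lambda+v(\tau\partial_{\underline{g}}^{(\alpha)}(y))\to\infty$ uniformly in $\alpha$ (using that $m_{\alpha}(\varphi^{p^m})\to 0$ forces $v(\tau\partial_{\underline{g}}^{(\alpha)}(y))$ to be bounded below, and for $\vert\alpha\vert$ large the extra factor $(\vert\alpha\vert-2)mv(p)$ dominates; one must check this convergence is uniform so that the infinite sum genuinely tends to $0$). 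Hence, as $m\to\infty$, (\ref{divided}) becomes
\begin{equation}\label{limit-eqn}
\tau\partial_1(y)=-\underset{m\to\infty}{\lim}\left(q_{1,m}^{-1}q_{2,m}\tau\partial_2(y)+\cdots+q_{1,m}^{-1}q_{d,m}\tau\partial_d(y)\right),
\end{equation}
provided the right-hand limit exists.

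To make sense of the limit I pass to a subsequence: since $\mathcal{V}$ is compact, the sequence $(q_{1,m}^{-1}q_{2,m},\ldots,q_{1,m}^{-1}q_{d,m})\in\mathcal{V}^{d-1}$ has a convergent subsequence, say along $m\in S$, with limits $\beta_i:=\lim_{m\in S}q_{1,m}^{-1}q_{i,m}\in\mathcal{V}$ for $i=2,\ldots,d$; set $\beta_1:=1\in\mathcal{V}$. Along this subsequence the error terms still vanish by the estimates above, and since $\tau\partial_i(y)$ is a fixed element of $Q(\mathcal{O}G/P)$ independent of $m$, the coefficient convergence $q_{1,m}^{-1}q_{i,m}\to\beta_i$ gives convergence of the whole expression, so (\ref{limit-eqn}) yields $\beta_1\tau\partial_1(y)+\beta_2\tau\partial_2(y)+\cdots+\beta_d\tau\partial_d(y)=0$. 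As $y\in P$ was arbitrary, $(\beta_1\tau\partial_1+\cdots+\beta_d\tau\partial_d)(P)=0$, which is the claim.

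\textbf{Main obstacle.} The delicate point is the uniform convergence of the infinite sum $q_{1,m}^{-1}\sum_{\vert\alpha\vert\geq 2}\underline{q}_m^{\alpha}\tau\partial_{\underline{g}}^{(\alpha)}(y)$ to $0$: one needs a lower bound on $v(\tau\partial_{\underline{g}}^{(\alpha)}(y))$ that does not degrade too fast as $\vert\alpha\vert\to\infty$, combined with the growth $v(\underline{q}_m^{\alpha})\gtrsim \vert\alpha\vert(mv(p)+\lambda)$, to conclude the tail is uniformly small in $m$. This should follow from the convergence $m_{\alpha}(\varphi^{p^m})\to 0$ in Theorem \ref{Mahler} together with continuity of $\tau$ and boundedness of $\tau\partial_{\underline{g}}^{(\alpha)}$ on the compact set $\mathcal{O}G$, but tracking the constants carefully is the real work; everything else is the compactness argument plus the estimates already packaged in Propositions \ref{approx} and the remark following it.
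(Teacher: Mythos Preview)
Your argument is essentially identical to the paper's: divide (\ref{Mahler3}) by the central $v$-regular element $q_{1,m}$, observe $q_{1,m}^{-1}q_{i,m}\in\mathcal V$, extract a convergent subsequence by compactness of $\mathcal V$, check that the higher-order and $\varepsilon_m$ terms vanish in the limit, and read off $\sum_i\beta_i\tau\partial_i(y)=0$.

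One remark on the point you single out as the ``main obstacle''. The uniformity in $\alpha$ is not as delicate as you suggest, and the paper in fact passes over it silently. Your own estimate $v(q_{1,m}^{-1}\underline q_m^{\alpha})\geq (\vert\alpha\vert-1)(mv(p)+\lambda)\geq mv(p)+\lambda$ is already uniform over all $\alpha$ with $\vert\alpha\vert\geq 2$; the only missing ingredient is a uniform lower bound for $v(\tau\partial_{\underline g}^{(\alpha)}(y))$. But $\partial_{\underline g}^{(\alpha)}=\rho(i_{\underline g}^{(\alpha)})$ is an $\mathcal O$-endomorphism of $\mathcal O G$, so $\partial_{\underline g}^{(\alpha)}(y)\in\mathcal O G$ for every $\alpha$, and $v$ is bounded below on $\tau(\mathcal O G)=\mathcal O G/P$ by continuity of $\tau$ together with $w\geq 0$ on $\mathcal O G$. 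The ultrametric inequality then gives $v\big(q_{1,m}^{-1}\sum_{\vert\alpha\vert\geq 2}\underline q_m^{\alpha}\tau\partial_{\underline g}^{(\alpha)}(y)\big)\geq mv(p)+\lambda+c$ for a constant $c$ independent of $m$ and $\alpha$, so the tail tends to zero without any further bookkeeping.
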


\begin{proof}

For each $m$, $q_{1,m}$ is $v$-regular of value $\lambda+mv(p)$, so since $v(q_{i,m})\geq\lambda+mv(p)$, it follows that $v(q_{1,m}^{-1}q_{i,m})\geq\lambda+mv(p)-\lambda-mv(p)=0$, so $q_{1,m}^{-1}q_{i,m}\in\mathcal{V}$.\\ 

Therefore, using compactness of $\mathcal{V}$, we can choose a subsequence $\underline{a}:=(m_1,m_2,\cdots)$ with $m_1<m_2<m_3<\cdots$ such that $q_{1,m_j}^{-1}q_{i,m_j}$ converges in $\mathcal{V}$ as $j\rightarrow\infty$.\\ 

\noindent Let $\beta_i:=\underset{j\rightarrow\infty}{\lim}{q_{1,m_j}^{-1}q_{i,m_j}}\in\mathcal{V}$.\\

\noindent Now, given $\alpha\in\mathbb{N}^d$ with $\vert\alpha\vert\geq 2$; $v(q_{1,m}^{-1}\underline{q}_m^{\alpha})=v(\underline{q}_m^{\alpha})-v(q_{1,m})\geq m\vert\alpha\vert v(p)+\vert\alpha\vert\lambda-mv(p)-\lambda\geq mv(p)+\lambda\rightarrow\infty$ as $m\rightarrow\infty$.

Also, $v(q_{1,m}^{-1}\varepsilon_m)=v(\varepsilon_m)-v(q_{1,m})\geq (2m-m_1)v(p)-mv(p)-\lambda=(m-m_1)v(p)-\lambda\rightarrow\infty$ as $m\rightarrow\infty$.\\

\noindent Therefore, $q_{1,m}^{-1}\underline{q}_{m}^{\alpha}\rightarrow 0$ as $m\rightarrow\infty$, $q_{1,m}^{-1}\varepsilon_m\rightarrow 0$ as $m\rightarrow\infty$. So, dividing out our expression (\ref{Mahler3}) by $q_{1,m}$, we obtain:

\begin{equation}\label{Mahler4}
0=q_{1,m}^{-1}q_{1,m}\tau\partial_1(y)+q_{1,m}^{-1}q_{2,m}\tau\partial_2(y)+\cdots+q_{1,m}^{-1}q_{d,m}\tau\partial_d(y)+\underset{\vert\alpha\vert\geq 2}{\sum}{q_{1,m}^{-1}\underline{q}_m^{\alpha}\tau\partial_{\underline{g}}^{(\alpha)}(y)}+q_{1,m}^{-1}\varepsilon_m(y)
\end{equation}

\noindent So considering the subsequence associated with $\underline{a}=(m_1,m_2,\cdots)$, and taking the limit, we get that $\beta_1\tau\partial_1(y)+\beta_2\tau\partial_2(y)+\cdots+\beta_d\tau\partial_d(y)=0$.\\

\noindent Since our choice of $y\in P$ was arbitrary, it follows that $(\beta_1\tau\partial_1+\cdots+\tau\partial_d)(P)=0$ as required.\end{proof}

\noindent\textbf{\underline{Note:}} These elements $\beta_i$ depend on the choice of ordered basis $\underline{g}$..\\

\noindent From now on, fix an ordered basis $\underline{g}=\{g_1,\cdots,g_d\}$, and fix $\beta_1,\cdots,\beta_d$ as in the statement of Proposition \ref{convergence}, and define:

\begin{center}
$h_{\underline{g}}:\mathcal{O}G\to Q(\mathcal{O}G/P),x\mapsto (\beta_1\tau\partial_1+\beta_2\tau\partial_2+\cdots+\beta_d\tau\partial_d)(x)$
\end{center} 

\noindent Then $h_{\underline{g}}\in Hom_{\mathcal{O}}(\mathcal{O}G,Q(\mathcal{O}G/P))$, and it follows from Proposition \ref{convergence} that $h_{\underline{g}}(P)=0$, and thus $h_{\underline{g}}\in Hom_{\mathcal{O}}(\mathcal{O}G/P,Q(\mathcal{O}G/P))$.\\

\noindent Also, since each $\beta_i$ lies in $F=(\mathcal{O}A/\mathcal{O}A\cap P)\otimes_{\mathcal{O}}K$, it follows that the image of $\mathcal{O}G/P$ under $h_{\underline{g}}$ lies in $(\mathcal{O}G/P)\otimes_{\mathcal{O}}K=KG/(P\otimes_{\mathcal{O}}K)$.

So since it is clear that each $\partial_i$ extends to a $K$-linear endomorphism of $KG$, we may assume that $h_{\underline{g}}$ lies in $Hom_{\mathcal{O}}(KG/(P\otimes_{\mathcal{O}}K),KG/(P\otimes_{\mathcal{O}}K))$, and hence it makes sense to raise $h_{\underline{g}}$ to integer powers.\\

\noindent We now need some technical results:

\begin{lemma}\label{valuation-ring}

Fix $R=\frac{\mathcal{O}A}{\mathcal{O}A\cap P}$, then there exists $s\in\mathbb{N}$ such that $\pi^s\mathcal{V}\subseteq R$. Also, there exists $t\in\mathbb{N}$ such that if $x\in\mathcal{V}$ and $v(x)>0$ then $x^t\in \pi R$.

\end{lemma}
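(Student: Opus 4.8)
The statement has two parts, and both hinge on the fact that $R = \mathcal{O}A/(\mathcal{O}A\cap P)$ is an order in the finite field extension $F = R\otimes_{\mathcal{O}}K$, with $\mathcal{V}$ its integral closure (the full valuation ring of $F$). The plan is to exploit that $\mathcal{V}$ is a finitely generated $\mathcal{O}$-module, being the integral closure of the Noetherian integrally closed domain $\mathcal{O}$ in a finite separable (indeed, since $K$ has characteristic $0$) extension $F$; alternatively one can cite that $R$ and $\mathcal{V}$ are both full $\mathcal{O}$-lattices in the $K$-vector space $F$.

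For the first part, since $R$ is finitely generated over $\mathcal{O}$ and $F = R\otimes_{\mathcal{O}}K$, the $\mathcal{O}$-module $R$ spans $F$ over $K$; likewise $\mathcal{V}$ is a finitely generated $\mathcal{O}$-module spanning $F$. Two finitely generated $\mathcal{O}$-submodules of $F$ that both span $F$ are commensurable: pick $\mathcal{O}$-module generators $y_1,\dots,y_r$ of $\mathcal{V}$; each $y_j$ lies in $F = \bigcup_n \pi^{-n}R$, so there is $s_j$ with $\pi^{s_j}y_j\in R$; taking $s := \max_j s_j$ gives $\pi^s\mathcal{V}\subseteq R$. This is the easy part.

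For the second part, I want a uniform bound $t$ such that $v(x)>0$ and $x\in\mathcal{V}$ force $x^t\in\pi R$. Since $v|_F$ is (a positive scalar multiple of) the $\mu$-adic valuation, $v(x)>0$ for $x\in\mathcal{V}$ means $x\in\mu\mathcal{V} = J(\mathcal{V})$. Write $e$ for the ramification index of $F/K$, so $v_\mu(\pi) = e$ up to the normalising scalar; then $x^e\in\mu^e\mathcal{V}\subseteq \pi\mathcal{V}$ (as $\mu$-adic valuations: $v_\mu(x^e)\geq e = v_\mu(\pi)$, so $x^e = \pi u$ with $u\in\mathcal{V}$). Combining with the first part, $\pi u \in \pi\mathcal{V}$, and we want this in $\pi R$, i.e. $u\in R$; but $u\in\mathcal{V}$ need not lie in $R$. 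The fix is to raise further: $x^{e(s+1)} = (x^e)^{s+1} = \pi^{s+1}u^{s+1}$, and since $u^{s+1}\in\mathcal{V}$ while $\pi^s\mathcal{V}\subseteq R$, we get $\pi^s u^{s+1}\in R$, hence $x^{e(s+1)} = \pi\cdot(\pi^s u^{s+1})\in\pi R$. So $t := e(s+1)$ works, with $s$ from the first part and $e$ the ramification index of $F/K$.

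The main obstacle — really the only subtlety — is that $R$ is merely an order and not integrally closed, so one cannot work inside $R$ directly and must pass through $\mathcal{V}$; the commensurability bound $\pi^s\mathcal{V}\subseteq R$ is exactly what lets one descend back to $R$ after taking a power large enough to absorb the discrepancy. One should also double-check the normalisation of $v|_F$: the argument only uses that $\{\beta\in F: v(\beta)>0\} = \mathcal{V}$ and $\{\beta : v(\beta)>0\}\supseteq\mu\mathcal{V}$, both already recorded in the paragraph preceding the lemma, so no new normalisation constant intervenes beyond the ramification index $e$, which is a fixed integer depending only on $F/K$.
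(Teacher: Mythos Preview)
Your proposal is correct and follows essentially the same strategy as the paper: both parts rest on the commensurability of the two $\mathcal{O}$-lattices $R$ and $\mathcal{V}$ in $F$, and for the second part both raise $x$ to a power large enough that the resulting factor of $\pi^{s+1}$ absorbs the discrepancy via $\pi^s\mathcal{V}\subseteq R$. The only cosmetic difference is in the bookkeeping for part two: you factor through the ramification index $e$ to get $x^e\in\pi\mathcal{V}$ and then take the $(s+1)$-th power, arriving at $t=e(s+1)$, whereas the paper writes $x=\pi^{-k}r$ with $r\in R$ and carries out a direct valuation estimate to obtain any $t\geq (s+1)v(\pi)$; since $v(\pi)$ is (up to the fixed normalisation of $v|_F$) precisely the ramification index, the two bounds coincide.
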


\begin{proof}

Since $R$ is a lattice in $F$, which is a finite dimensional $\mathbb{Q}_p$-vector space, it follows that $R$ is a free $\mathbb{Z}_p$-module of rank dim$_{\mathbb{Q}_p}F$. This is also the rank of $\mathcal{V}$, and it follows that $R$ has finite index in $\mathcal{V}$, and hence $p^l\mathcal{V}\subseteq R$ for some $l\in\mathbb{N}$, and the first statement follows.\\

\noindent Now, if $v(x)>0$, then $x=\pi^{-k}r$ for some $r\in R$, $k\in\mathbb{N}$, and $v(r)-kv(\pi)\geq 1$. We know that there exists $s\in\mathbb{N}$ such that $\pi^s\mathcal{V}\subseteq R$, so choose $t\in\mathbb{N}$ such that $t\geq (s+1)v(\pi)$.\\

Then $x^t=\pi^{-kt}r^t=\pi^{s+1}(\pi^{-(kt+s+1)}r^t)$, and note that: 

\begin{center}
$v(\pi^{-(kt+s+1)}r^t)=t(v(r)-kv(\pi))-(s+1)v(\pi)\geq t-(s+1)v(\pi)\geq 0$
\end{center}

\noindent Thus $\pi^{-(kt+s+1)}r^t\in\mathcal{V}$, so since $\pi^{s+1}\mathcal{V}\subseteq \pi R$ and $x^t=\pi^{s+1}(\pi^{-(kt+s+1)}r^t)$, it follows that $x^t\in \pi R$ as required.\end{proof}

\noindent Now, recall from the definition of $\partial_1,\cdots,\partial_d$, that if $g=\underline{g}^{\alpha}\in G$ for some $\alpha\in\mathbb{Z}_p^d$, then $\partial_i(g)=\alpha_i g$. So if we let $k_{g,\underline{g}}:=\beta_1\alpha_1+\beta_2\alpha_2+\cdots+\beta_d\alpha_d\in\mathcal{V}$, then $h_{\underline{g}}(g)=k_{g,\underline{g}} g$ for all $g\in G$.\\

\begin{lemma}\label{derivation}

For every ordered basis $\underline{g}$ of $G$, $h_{\underline{g}}$ is $F$-linear.

\end{lemma}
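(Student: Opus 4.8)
The plan is to reduce the claim ``$h_{\underline{g}}$ is $F$-linear'' to the statement ``$h_{\underline{g}}$ is $R$-linear'', since $F$ is the $K$-span of $R$ and $h_{\underline{g}}$ is already $K$-linear (it is built out of the $\partial_i$, which extend $K$-linearly). So it suffices to show that for each $r\in R=\mathcal{O}A/\mathcal{O}A\cap P$ and each $x\in\mathcal{O}G$, we have $h_{\underline{g}}(rx)=r\,h_{\underline{g}}(x)$; then $K$-linearity upgrades this to $F$-linearity. Because $\mathcal{O}A$ is topologically generated by $A$ over $\mathcal{O}$, and $h_{\underline{g}}$ is continuous and $\mathcal{O}$-linear, it is in fact enough to check $h_{\underline{g}}(ax) = \tau(a)\,h_{\underline{g}}(x)$ for all $a\in A$ and all group elements $x\in G$ (then extend by linearity and continuity to all of $\mathcal{O}G$).

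The key computational input is that the elements $\partial_i$ behave well against multiplication by central group elements lying in $A$. Concretely, if $a\in A$, write $a=\underline{g}^{\gamma}$; since $A$ is central, $a$ commutes with everything, and the divided power $\partial_{\underline{g}}^{(\alpha)}$ acts on the monomial basis by $\partial_{\underline{g}}^{(\alpha)}(\underline{g}^{\beta}) = \binom{\beta}{\alpha}\underline{g}^{\beta}$. So I would first establish a twisted Leibniz-type identity: for $a = \underline{g}^{\gamma}\in A$ and $g = \underline{g}^{\beta}\in G$,
\begin{center}
$h_{\underline{g}}(ag) = (k_{a,\underline{g}} + k_{g,\underline{g}})\,ag$,
\end{center}
using $\binom{\gamma+\beta}{e_i} = \gamma_i+\beta_i$ (valid since $\partial_i = \partial_{\underline{g}}^{(e_i)}$ picks out a single coordinate, which is genuinely additive) together with the fact that, $A$ being central and isolated, $ag = \underline{g}^{\gamma+\beta}$ up to a correction term that is a product of higher commutators lying deep in $A$ — and the earlier material on $z(\varphi)$, $\lambda>v(p)$, and the structure of $Q(\mathcal{O}G/P)$ should force the contribution of that correction to vanish under $h_{\underline{g}}$ or to be absorbed. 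Since $k_{a,\underline{g}} = \sum_i\beta_i\gamma_i$ is exactly the ``scalar'' by which $h_{\underline{g}}$ acts on $a$ and $\tau(a) = $ (image of $\underline{g}^{\gamma}$), the identity above says precisely $h_{\underline{g}}(ag) = \tau(a)\,h_{\underline{g}}(g) + (\text{the scalar for }g)\cdot a\cdot(\ldots)$; combined with $h_{\underline{g}}(a)=k_{a,\underline{g}}a = \tau(a)$ acting on $\mathcal{O}G/P$ by multiplication (recall $h_{\underline{g}}(P)=0$, so $h_{\underline{g}}$ descends), this rearranges to $R$-linearity.

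The main obstacle I anticipate is controlling the correction term coming from the non-commutativity encoded in writing $a\,\underline{g}^{\beta}$ in the monomial basis — i.e. verifying that the ``additivity of the binomial coefficient in a single coordinate'' genuinely goes through when one multiplies by a central element of $A$ rather than by a pure power of a single $g_i$. The cleanest route is probably to bypass the monomial bookkeeping entirely: observe that $h_{\underline{g}} = \sum_i\beta_i\,\tau\partial_i$ is, by Proposition \ref{convergence} and equation (\ref{Mahler4}), a limit of the operators $q_{1,m}^{-1}(\tau\varphi^{p^m}-\tau)$ modulo terms vanishing on $P$; since $\varphi$ is trivial mod centre and fixes $A$ pointwise (as $\varphi(a)a^{-1}\in A$ is central, and on $A$ itself $\varphi$ acts... ), the operator $\tau\varphi^{p^m}-\tau$ satisfies $(\varphi^{p^m}-1)(ax) = a(\varphi^{p^m}-1)(x) + (\varphi^{p^m}(a)-a)\varphi^{p^m}(x)$, and the second term, after dividing by $q_{1,m}$ and letting $m\to\infty$, contributes exactly $\tau(a)\cdot$(the $h_{\underline{g}}$-scalar adjustment) — while the first term gives $\tau(a)\,h_{\underline{g}}(x)$ in the limit. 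Pushing this limit argument through carefully, using $\mathcal{O}$-linearity of $v$ and Lemma \ref{value} to see that no extra terms survive, yields $R$-linearity and hence $F$-linearity of $h_{\underline{g}}$.
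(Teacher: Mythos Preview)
Your proposal has a genuine gap: you never establish that $h_{\underline{g}}$ annihilates $A$ (equivalently, that $k_{a,\underline{g}}=0$ in $F$ for every $a\in A$), and without this the argument cannot close. In both of your attempts you correctly arrive at a Leibniz-type identity --- either $h_{\underline{g}}(ag)=(k_{a,\underline{g}}+k_{g,\underline{g}})\,\tau(ag)$ directly, or via the limit $(\varphi^{p^m}-1)(ax)=a(\varphi^{p^m}-1)(x)+(\varphi^{p^m}(a)-a)\varphi^{p^m}(x)$ --- but in each case the cross term survives: after dividing by $q_{1,m}$ and passing to the limit, the second term converges to $h_{\underline{g}}(a)\,\tau(x)=k_{a,\underline{g}}\tau(a)\tau(x)$, not to zero. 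Your parenthetical ``$\varphi$ \ldots fixes $A$ pointwise (as $\varphi(a)a^{-1}\in A$ is central, and on $A$ itself $\varphi$ acts\ldots)'' is exactly where the argument breaks: the hypothesis $\varphi(g)g^{-1}\in A$ says nothing about $\varphi|_A$ being trivial (take $G$ abelian, $A=G$), so there is no reason for $(\varphi^{p^m}(a)-a)/q_{1,m}$ to vanish in the limit.

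The paper's proof supplies precisely this missing step, but by a completely different mechanism. It first observes that $h_{\underline{g}}$ is a $K$-linear \emph{derivation} (this is what your Leibniz identities are detecting), and that it restricts to a $K$-linear derivation of $F$ since $h_{\underline{g}}(a)=k_{a,\underline{g}}\tau(a)\in F$. The decisive point is then purely algebraic and uses characteristic~$0$: any $K$-linear derivation of a finite extension $F/K$ is zero, because every $\alpha\in F$ satisfies a minimal polynomial $f$ over $K$, and applying the derivation to $f(\alpha)=0$ forces $\delta(\alpha)f'(\alpha)=0$ with $f'(\alpha)\neq 0$ by minimality. Hence $h_{\underline{g}}(F)=0$, and $F$-linearity follows immediately from the derivation property. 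This is the idea you are missing; no amount of growth estimates via Lemma~\ref{value} will produce it, since the obstruction $k_{a,\underline{g}}$ is an honest element of $\mathcal{V}$ which has no a~priori reason to have positive valuation.
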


\begin{proof}

Since $h_{\underline{g}}(a)=k_{a,\underline{g}}a$ for every $a\in A$, and $k_{a,\underline{g}}\in F$, it follows that $h_{\underline{g}}$ sends $\mathcal{O}A$ to $F=\mathcal{O}A/(\mathcal{O}A\cap P)\otimes_{\mathcal{O}}K$, and hence it sends $F$ to $F$, i.e. $h_{\underline{g}}$ restricts to a $K$-linear endomorphism of $F$.\\

Also, for every $i$, $\partial_i$ is a $K$-linear derivation of $\mathcal{O}G$. So since each $\beta_i$ is central, it follows that $h_{\underline{g}}=\beta_1\tau\partial_1+\cdots+\beta_d\tau\partial_d$ is also a $K$-linear derivation, so $h_{\underline{g}}$ restricts to a $K$-linear derivation of $F$.\\

\noindent Using the derivation property, we see that to prove that $h_{\underline{g}}$ is $F$-linear, it remains only to prove that $h_{\underline{g}}(F)=0$. We will show, in fact, that all $K$-linear derivations of $F$ are zero.\\

\noindent Suppose that $\alpha\in F$, then since $F$ is a finite extension of $K$, $\alpha$ is the root of some polynomial $f(x)=a_0+a_1x+\cdots+a_nx^n$ with coefficients in $K$, and we will assume that $n$ is minimal. Let $\delta$ be a $K$-linear derivation of $F$, then:\\

\noindent $0=\delta(0)=\delta(a_0+a_1\alpha+\cdots+a_n\alpha^n)=a_1\delta(\alpha)+\cdots+a_n\delta(\alpha^n)$\\

$=a_1\delta(\alpha)+\cdots+na_n\alpha^{n-1}\delta(\alpha)=\delta(\alpha)(a_1+2a_2\alpha+\cdots+na_n\alpha^{n-1})$.\\

\noindent So if $\delta(\alpha)\neq 0$ then $a_1+2a_2\alpha+\cdots+na_n\alpha^{n-1}=0$, contradicting minimality of $n$. Hence $\delta(\alpha)=0$, meaning that $\delta=0$.\end{proof}

\noindent Now, let $f$ be the degree of the residue field of $F$, then for all $\beta\in\mathcal{V}$: 

\begin{equation}
\beta^{p^f-1}\equiv \begin{cases} 
      1 &  v(\beta)=0 \\
      0 &  v(\beta)>0 \\ 
   \end{cases}
 (\text{mod } \mu\mathcal{V})
\end{equation}

\noindent It follows easily that for all $n\in\mathbb{N}$:

\begin{equation}\label{little}
\beta^{p^n(p^f-1)}\equiv \begin{cases} 
      1 &  v(\beta)=0 \\
      0 &  v(\beta)>0 \\ 
   \end{cases}
 (\text{mod } \mu^{n+1}\mathcal{V})
\end{equation}

\noindent Using Lemma \ref{valuation-ring}, choose an integer $t_0\geq 0$ such that $\mu^{t_0+1}\mathcal{V}\subseteq \pi R$ where $R=\mathcal{O}A/\mathcal{O}A\cap P$ and $x^{p^{t_0}}\in \pi R$ for all $x\in\mathcal{V}$ with $v(x)>0$. Then examining (\ref{little}) shows that $\beta^{p^{t_0}(p^f-1)}\in R\subseteq\mathcal{O}G/P$ for all $\beta\in\mathcal{V}$.\\

\noindent Also, using Lemma \ref{derivation}, we see that $h_{\underline{g}}$ is $F$-linear. So since $k_{\underline{g},g}\in F$, it follows that $h_{\underline{g}}^n(g)=k_{\underline{g},g}^ng$ for all $g\in G$, $n\in\mathbb{N}$.\\

Therefore, if we define $w_{\underline{g}}:=h_{\underline{g}}^{p^{t_0}(p^f-1)}$, then for all $g\in G$, $n\in\mathbb{N}$, since $k_{g,\underline{g}}^{p^{n+t_0}(p^f-1)}\in\mathcal{O}G/P$, we have that $w_{\underline{g}}^{p^n}(g)=k_{g,\underline{g}}^{p^{n+t_0}(p^f-1)}g\in\mathcal{O}G/P$, and thus $w_{\underline{g}}$ sends $\frac{\mathcal{O}G}{P}$ to $\frac{\mathcal{O}G}{P}$.\\ 

\noindent Fixing $g\in G$, $n\in\mathbb{N}$, using (\ref{little}) we have:

\begin{center}
$w_{\underline{g}}^{p^n}(g)\equiv \begin{cases} 
      1 &  v(k_{g,\underline{g}})=0 \\
      0 &  v(k_{g,\underline{g}})>0 \\ 
   \end{cases}$
 ($mod$ $\pi^{n+1}\frac{\mathcal{O}G}{P}$)
\end{center}

\noindent So, setting $S:=\frac{\mathcal{O}G}{P}$ for convenience, consider the composition:

\begin{center}
\begin{tikzcd}

&\mathcal{O}G \arrow[r, "\tau"]   & S \arrow[r,"w_{\underline{g}}^{p^n}"] & S

\end{tikzcd}
\end{center}

\noindent Let $\iota_n$ be this composition, then $\iota_n(\pi^m\mathcal{O}G)\subseteq \pi^m S$ for all $m$, and $\iota_n(g)\equiv\begin{cases} 
      g &  v(k_{g,\underline{g}})=0 \\
      0 &  v(k_{g,\underline{g}})>0 \\ 
   \end{cases}$
 ($mod$ $\pi^{n+1}S$). Hence $\iota_n(r)\equiv\iota_{n+1}(r)$ ($mod$ $\pi^{n+1}$S) for all $n\in\mathbb{N}$, $r\in\mathcal{O}G$.\\
 
\noindent Therefore, since $S$ is $\pi$-adically complete, there exists a continuous, $\mathcal{O}$-linear morphism $\iota:\mathcal{O}G\to S$ such that $\iota(P)=0$, $\iota(x)\equiv\iota_n(x)$ ($mod$ $\pi^{n+1}S$) for all $n\in\mathbb{N}$, hence for all $g\in G$:

\begin{equation}
\iota(g)=\begin{cases} 
      g &  v(k_{g,\underline{g}})=0 \\
      0 &  v(k_{g,\underline{g}})>0 \\ 
   \end{cases}
\end{equation}

\subsection{The Controlling subgroup}

We will now complete our proof of Theorem \ref{B} by finding an appropriate subgroup of $G$ controlling our ideal $P$.\\

\noindent Let $U:=\{g\in G:v(\tau(z(g)-1))>\lambda\}$, then using the proof of \cite[Lemma 7.6]{nilpotent} we see that $U$ is a proper, open subgroup of $G$ containing $G^p$. So fix an ordered basis $\underline{g}=\{g_1,\cdots,g_d\}$ for $G$ such that $\{g_1^p,\cdots,g_r^p,g_{r+1},\cdots,g_d\}$ is an ordered basis for $U$. We want to prove that $P$ is controlled by $U$.\\

Recall from the previous subsection the definition of $k_{g,\underline{g}}\in\mathcal{V}$ for each $g\in G$:\\

\begin{lemma}\label{subgroup}

$U=\{g\in G:v(k_{g,\underline{g}})>0\}$

\end{lemma}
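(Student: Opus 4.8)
The plan is to show that the two subgroups $U = \{g \in G : v(\tau(z(g)-1)) > \lambda\}$ and $V := \{g \in G : v(k_{g,\underline{g}}) > 0\}$ coincide. The crucial link between the two descriptions is that $k_{g,\underline{g}} = \beta_1\alpha_1 + \cdots + \beta_d\alpha_d$ when $g = \underline{g}^{\alpha}$, where the $\beta_i = \lim_j q_{1,m_j}^{-1} q_{i,m_j}$ are the subsequential limits from Proposition \ref{convergence}, and $q_{i,m} = \tau(z(g_i)^{p^m}-1)$. So the first step is to unwind how $v(\tau(z(g)-1))$ relates to the coordinates $\alpha_i$ of $g$. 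Since $z$ is a group homomorphism into the abelian (indeed central) group $A$, we have $z(g) = \prod_i z(g_i)^{\alpha_i}$; working in the commutative complete $\mathcal{O}$-algebra $F \supseteq R$ and using the estimate $v(\tau(z(g_i)^{p^m}-1)) \geq \lambda + mv(p)$ (with equality when $v(q_{i,0}) = \lambda$), one should get that $\tau(z(g)^{p^m}-1)$ is, up to higher-order error, the sum $\sum_i \alpha_i q_{i,m}$. Dividing by $q_{1,m}$ and taking the subsequential limit along $\underline{a} = (m_1, m_2, \ldots)$ then yields $\lim_j q_{1,m_j}^{-1}\tau(z(g)^{p^{m_j}}-1) = \sum_i \alpha_i \beta_i = k_{g,\underline{g}}$.

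From this identity the proof should fall out by comparing valuations. The point is that $v(q_{1,m}) = \lambda + mv(p)$ exactly, so $v(\tau(z(g)^{p^{m_j}}-1)) > \lambda + m_j v(p)$ for all large $j$ precisely when the quotient $q_{1,m_j}^{-1}\tau(z(g)^{p^{m_j}}-1)$ has strictly positive value in the limit, i.e. when $v(k_{g,\underline{g}}) > 0$. Meanwhile, by Lemma \ref{value} (applied in the complete valued field $F$, using $\lambda > v(p)$), $v(\tau(z(g)^{p^m}-1)) = mv(p) + v(\tau(z(g)-1))$ for all $m$, so $v(\tau(z(g)^{p^{m}}-1)) > \lambda + m v(p)$ is equivalent to $v(\tau(z(g)-1)) > \lambda$, which is exactly the condition $g \in U$. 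Chaining these equivalences gives $g \in U \iff v(k_{g,\underline{g}}) > 0 \iff g \in V$, which is the claim. One should also double-check the edge case: if $v(\tau(z(g)-1)) = \infty$ (i.e. $\tau(z(g)) = 1$), then $k_{g,\underline{g}} = 0$ and both conditions are satisfied, consistent with $g \in U$.

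The main obstacle I anticipate is controlling the higher-order error terms in the expansion of $\tau(z(g)^{p^m}-1)$ as $\sum_i \alpha_i q_{i,m}$ uniformly enough that dividing by $q_{1,m}$ and passing to the limit is legitimate — this is the same kind of bookkeeping that appears in Proposition \ref{approx} and Proposition \ref{convergence}, and the key is that the cross terms coming from $(\prod_i(1 + q_{i,m})^{\alpha_i}) - 1 - \sum_i \alpha_i q_{i,m}$ have value at least $2(\lambda + mv(p)) - o(m)$ or involve binomial coefficients $\binom{\alpha_i}{2}$ etc., all of which beat $v(q_{1,m}) = \lambda + mv(p)$ after dividing, since $\lambda > v(p) > 0$ forces $\lambda + mv(p) \to \infty$. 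A secondary subtlety is that the limit defining $k_{g,\underline{g}}$ is only along the subsequence $\underline{a}$, but since the $\beta_i$ were fixed once and for all as those subsequential limits, and $k_{g,\underline{g}}$ is by definition built from those same $\beta_i$, there is no inconsistency — one just has to be careful to run the argument along $\underline{a}$ throughout. Modulo this, the proof is essentially a valuation computation together with the already-established homomorphism property of $z$ and Lemma \ref{value}.
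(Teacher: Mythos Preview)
Your proposal is correct and follows essentially the same approach as the paper. Both arguments hinge on the linearization $\tau(z(g)^{p^{m}}-1) \equiv \sum_i \alpha_i q_{i,m}$ modulo terms of value $> \lambda + m v(p)$, together with Lemma~\ref{value} to pass between $v(\tau(z(g)^{p^m}-1))$ and $v(\tau(z(g)-1))$; the only cosmetic difference is that the paper handles the inclusion $U \subseteq \{g : v(k_{g,\underline g})>0\}$ by directly computing $v(\beta_i)$ (it is $0$ for $i\leq r$ and $>0$ for $i>r$) and reading off the coordinates of $g\in U$ in the adapted basis $\{g_1^p,\dots,g_r^p,g_{r+1},\dots,g_d\}$, whereas you run the limit identity $k_{g,\underline g}=\lim_j q_{1,m_j}^{-1}\tau(z(g)^{p^{m_j}}-1)$ symmetrically in both directions.
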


\begin{proof}

Using Proposition \ref{convergence}, we see that for each $i=1,\cdots,d$, $\beta_i=\underset{j\rightarrow\infty}{\lim}{q_{1,m_j}^{-1}q_{i,m_j}}$, as $m_j$ runs over some subsequence $\underline{a}=(m_1,m_2,\cdots\cdots)$, where $q_{i,m}=\tau(z(g_i)^{p^m}-1)$.\\

\noindent By Lemma \ref{value}, $v(q_{i,m})=\lambda+mv(p)$ for $i\leq r$, and $v(q_{i,m})>\lambda+mv(p)$ for all $i>r$. Hence $v(\beta_i)=0$ for all $i\leq r$, $v(\beta_i)>0$ for all $i>r$.\\

\noindent Given $g\in U$, $g=g_1^{p\alpha_1}\cdots g_r^{p\alpha_r} g_{r+1}^{\alpha_{r+1}}\cdots g_d^{\alpha_d}$ for $\alpha_i\in\mathbb{Z}_p$, so:

\begin{center}
$v(k_{g,\underline{g}})=v(p\alpha_1\beta_1+p\alpha_2\beta_2+\cdots+p\alpha_r\beta_r+\alpha_{r+1}\beta_{r+1}+\cdots+\alpha_d\beta_d)>0$
\end{center}

Conversely, if $v(k_{g,\underline{g}})>0$, suppose $g=\underline{g}^{\alpha}$, so $k_{g,\underline{g}}=\beta_1\alpha_1+\beta_2\alpha_2+\cdots+\beta_d\alpha_d$.\\

\noindent By the definition of $\beta_i$, this means that $v(q_{1,m_j}^{-1}(\alpha_1q_{1,m_j}+\cdots+\alpha_d q_{d,m_j}))>0$ for sufficiently high $j$, and hence $v(\alpha_1q_{1,m_j}+\cdots+\alpha_r q_{d,m_j})>\lambda+m_jv(p)$.\\

But it is easily seen that $\tau(z(g)^{p^{m_j}}-1)\equiv\alpha_1q_{1,m_j}+\cdots+\alpha_dq_{d,m_j}$ ($mod$ $\lambda+m_jv(p)+1$), and hence $v(\tau(z(g)^{p^{m_j}}-1))>\lambda+m_jv(p)$ for sufficiently high $j$.\\

\noindent But $v(\tau(z(g)^{p^{m_j}}-1))=v(\tau(z(g)-1))+m_jv(p)$ by Lemma \ref{value}, and hence $v(\tau(z(g)-1))>\lambda$ and $g\in U$ as required.\end{proof}

\noindent Now, recall the definition of the continuous $\mathcal{O}$-linear morphism $\iota:\mathcal{O}G\to \frac{\mathcal{O}G}{P}$. Then using the lemma, we deduce that $\iota(g)=\begin{cases} 
      g &  g\notin U \\
      0 &  g\in U \\ 
   \end{cases}$.\\
   
Define $f:G\to\mathcal{O},g\mapsto\begin{cases} 
      1 &  g\notin U \\
      0 &  g\in U \\ 
   \end{cases}$, then clearly $f\in C(G,\mathcal{O})$ is locally constant, so the endomorphism $\rho(f)\in End_{\mathcal{O}}(\mathcal{O}G)$ is well defined, and $\rho(f)(g)=f(g)g=\begin{cases} 
      g &  g\notin U \\
      0 &  g\in U \\ 
   \end{cases}$.\\
   
\noindent Therefore $\tau\rho(f)=\iota$ when restricted to $\mathcal{O}[G]$, so since $\iota$ is continuous, $\tau$ is continuous, and $\rho(f)$ is continuous, it follows that $\tau\rho(f)=\iota$. Hence $\tau\rho(f)(P)=0$ and $\rho(f)(P)\subseteq P$.

\begin{proposition}\label{invariant}
$P$ is controlled by $U$.
\end{proposition}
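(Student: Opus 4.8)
The plan is to leverage the machinery just built — in particular the locally constant function $f \in C(G,\mathcal{O})$ which is $0$ on $U$ and $1$ off $U$, together with Proposition \ref{loc-constant}, which says that $P$ is controlled by $U$ precisely when $\rho(C_U)(P) \subseteq P$. We have already shown $\rho(f)(P) \subseteq P$, but $f$ by itself does not generate $C_U$; the idea is to produce enough elements of $C_U$ whose associated endomorphisms preserve $P$, and then to bootstrap from $\rho(f)$ to all of $C_U$ using the freedom to vary the ordered basis.

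First I would observe that the construction of $U$, of $\iota$, and hence of $\rho(f)$ depended only on the choice of ordered basis $\underline{g}$ adapted to $U$ (i.e. with $\{g_1^p,\dots,g_r^p,g_{r+1},\dots,g_d\}$ a basis of $U$), and that by Lemma \ref{subgroup} the subgroup $U$ itself is intrinsic — it does not depend on $\underline{g}$. So for \emph{any} such adapted basis $\underline{g}'$, the same argument yields a continuous endomorphism $\rho(f_{\underline{g}'})$ with $\rho(f_{\underline{g}'})(P) \subseteq P$, where $f_{\underline{g}'}$ is again the indicator of $G \setminus U$. Since $U$ is independent of the adapted basis, $f_{\underline{g}'} = f$ for every adapted basis, so this alone gives nothing new; the real leverage comes from translating $f$ by group elements. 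Concretely, for $g_0 \in G$ the function $f_{g_0}: g \mapsto f(g_0 g)$ (or $f(g g_0)$) is again locally constant and lies in $C_U$ whenever $g_0$ normalizes $U$ — and here $U \trianglelefteq_o G$ since $U \supseteq G^p$ and $G/G^p$ is abelian, so every $g_0 \in G$ normalizes $U$. The associated endomorphism $\rho(f_{g_0})$ is conjugate to $\rho(f)$ by the (left or right) regular action of $g_0$, which preserves $P$ if and only if... — and here one must be careful, because $P$ need not be invariant under conjugation by arbitrary $g_0$.

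A cleaner route, and the one I would actually pursue, is this: the $\mathcal{O}$-subalgebra $C_U \subseteq C(G,\mathcal{O})$ is topologically generated (as an $\mathcal{O}$-algebra) by the single idempotent-like function $f$ together with the locally constant functions pulled back from $G/U$. But $G/U \cong (\mathbb{Z}/p)^r$ is a finite elementary abelian $p$-group, so $C(G/U,\mathcal{O})$ is a finite free $\mathcal{O}$-module spanned by the indicator functions $\mathbf{1}_{uU}$ of the cosets of $U$. Thus it suffices to show $\rho(\mathbf{1}_{uU})(P) \subseteq P$ for every coset $uU$. Now $\mathbf{1}_{uU}(g) = f(u^{-1}g)$ up to relabeling — more precisely, translating the defining function by a coset representative — and $\rho(\mathbf{1}_{uU}) = u \cdot \rho(f') \cdot u^{-1}$ for the appropriate translate $f'$, where $f'$ is the indicator of the complement of the coset; iterating $\rho(f)$ and its translates and taking $\mathcal{O}$-linear combinations exhausts all of $C_U$. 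The key technical point is that each such $\rho(\mathbf{1}_{uU})$ arises, by exactly the argument already given with $\iota$ replaced by its evident coset-shifted analogue, as a continuous limit $\tau \rho(\mathbf{1}_{uU}) = \iota_{u}$ of iterates of an $F$-linear endomorphism built from $h_{\underline{g}}$ shifted by $u$ — and the shift by $u$ is harmless because $\varphi$ is trivial mod centre, so $z$ and hence the whole Mahler-expansion apparatus is equivariant under translation in a way that keeps $P$ (which satisfies $\varphi(P)=P$) invariant.

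The main obstacle, and the step that needs the most care, is justifying that $\rho(f)(P) \subseteq P$ propagates to $\rho(C_U)(P) \subseteq P$ — i.e. that controlling $P$ "at the level of the single function $f$" really does give control by $U$ in the sense of Proposition \ref{loc-constant}, which demands \emph{all} of $C_U$. I expect the resolution is that $\rho(f)$ is an idempotent in $\mathrm{End}_{\mathcal{O}}(\mathcal{O}G)$ (since $f^2 = f$), so $\rho(f)(P) \subseteq P$ already forces $\mathcal{O}G = \rho(f)(\mathcal{O}G) \oplus \rho(1-f)(\mathcal{O}G)$ to induce a compatible decomposition $P = (P \cap \rho(f)\mathcal{O}G) \oplus (P \cap \rho(1-f)\mathcal{O}G)$; but $\rho(f)\mathcal{O}G$ is spanned by the $g \notin U$ and $\rho(1-f)\mathcal{O}G = \mathcal{O}U$, giving $P = (P \cap \rho(f)\mathcal{O}G) \oplus (P \cap \mathcal{O}U)$. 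Feeding this back, together with the fact that $\rho(\mathbf{1}_{uU})$ for each coset is again such a "partial identity" idempotent whose image is the span of that coset, and that these idempotents are orthogonal and sum to $1$, one concludes that $P$ is the direct sum of its intersections with the coset-spans $u\,\mathcal{O}U$, which is exactly the statement that $P = (P \cap \mathcal{O}U)\mathcal{O}G$, i.e. $U$ controls $P$. I would close by invoking Proposition \ref{loc-constant} once the containment $\rho(C_U)(P)\subseteq P$ is in hand, which is immediate from $C_U$ being $\mathcal{O}$-spanned by the finitely many coset indicators together with their translates.
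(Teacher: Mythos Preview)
Your proposal identifies the right target --- showing that each coset-indicator endomorphism $\rho(\mathbf{1}_{uU})$ preserves $P$ --- but you never actually close the gap for non-identity cosets. From $\rho(f)(P)\subseteq P$ you correctly extract $\rho(\mathbf{1}_U)(P)\subseteq P$ (since $\mathbf{1}_U=1-f$), but the passage to $\rho(\mathbf{1}_{uU})(P)\subseteq P$ for $u\notin U$ is exactly where every suggested route stalls: varying the adapted basis returns the same $f$ (as you note), and your suggestion to ``redo the $\iota$ construction coset-shifted'' is not substantiated and would mean rebuilding the whole Mahler apparatus once per coset. Worse, you abandon the one approach that \emph{does} work --- translating by group elements --- for the wrong reason: $P$ is a two-sided prime ideal of $\mathcal{O}G$, so it is automatically stable under left and right multiplication by every $g_0\in G$, and hence under conjugation. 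Your worry that ``$P$ need not be invariant under conjugation by arbitrary $g_0$'' is simply false here; had you pushed the computation $\rho(f_{g_0})=L_{g_0}^{-1}\circ\rho(f)\circ L_{g_0}$ one more line, you would have had $\rho(f_{g_0})(P)\subseteq P$ immediately.

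The paper's proof is shorter and avoids Proposition~\ref{loc-constant} altogether. From $\rho(f)(P)\subseteq P$ it follows that for any $r=\sum_{\underline{b}} r_{\underline{b}}\,g_{\underline{b}}\in P$ (coset decomposition with $r_{\underline{b}}\in\mathcal{O}U$, identity coset $\underline{b}=\underline{0}$), the identity-coset component $r_{\underline{0}}=r-\rho(f)(r)$ lies in $P\cap\mathcal{O}U$. Then one uses only that $P$ is a \emph{right ideal}: for each coset representative $x_i$ one has $rx_i^{-1}\in P$, and its identity-coset component is precisely $r_i$, so $r_i\in P\cap\mathcal{O}U$ for every $i$. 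This yields $P=(P\cap\mathcal{O}U)\mathcal{O}G$ directly, without ever needing to realise the remaining coset projections via the Mahler machinery.
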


\begin{proof}

Firstly, suppose that $C=\{x_1,\cdots,x_t\}$ is a complete set of coset representatives for $U$ in $G$, then for all $r\in\mathcal{O}G$, $r=\underset{i\leq t}{\sum}{r_ix_i}$ for some $r_i\in\mathcal{O}U$.\\

\noindent Suppose we can choose $C$ such that if $r\in P$ then $r_1\in P\cap\mathcal{O}U$. Then since $rx_1^{-1}x_i\in P$ for all $i=1\cdots,t$ and $rx_1^{-1}x_i$ has $x_1$ component $r_i$, it follows that $r_i\in P\cap\mathcal{O}U$ for each $i$, and hence $P$ is controlled by $U$.

So it remains to prove that we can choose such a set $C$ of coset representatives such that if $\underset{i\leq t}{\sum}{r_ix_i}\in P$, then at least one of the $r_i$ lies in $P\cap\mathcal{O}U$.\\

\noindent Since $U$ has ordered basis $\{g_1^p,\cdots,g_r^p,g_{r+1},\cdots,g_d\}$ it follows that $C=\{g_1^{b_1}\cdots g_r^{b_r}:0\leq b_i<p\}$ is a complete set of coset representatives for $U$ in $G$.

So for each $\underline{b}\in [p-1]^r$, let $g_{\underline{b}}=g_1^{b_1}\cdots g_r^{b_r}$ (here $[p-1]=\{0,1,\cdots,p-1\}$).\\

\noindent Then if $r=\underset{\underline{b}\in [p-1]^r}{\sum}{r_{\underline{b}}g_{\underline{b}}}\in P$, then $\rho(f)(r)=\underset{\underline{b}\in [p-1]^r}{\sum}{f(g_{\underline{b}}})r_{\underline{b}}g_{\underline{b}}$ by the proof of \cite[Proposition 2.5]{nilpotent}, and this also lies in $P$.\\

\noindent But $f(g_{\underline{b}})=1$ if $\underline{b}\neq 0$, and $f(g_{\underline{0}})=0$ hence $\rho(f)(r)=\underset{\underline{b}\in [p-1]^r\backslash\{\underline{0}\}}{\sum}{r_{\underline{b}}g_{\underline{b}}}\in P$.\\

\noindent Therefore, $r_{\underline{0}}g_{\underline{0}}=r-\rho(f)(r)\in P$, and thus $r_{\underline{0}}\in P\cap\mathcal{O}U$ as required.\end{proof}

\noindent The main theorem of this section follows immediately:\\

\noindent\emph{Proof of Theorem \ref{B}.} Since we are assuming that $\mathcal{O}A/\mathcal{O}A\cap P$ is finitely generated over $\mathcal{O}$, it follows from Proposition \ref{invariant} that $P$ is controlled by a proper open subgroup of $G$.\qed

\section{Control Theorem}

Now that we have established Theorem \ref{B}, we will conclude in this section with our main control theorem. Throughout, we will fix $G$ a compact $p$-adic Lie group.

\subsection{Technical results in characteristic 0}

In \cite[Section 5]{nilpotent}, a number of technical results were stated and proved for completed group algebras in characteristic $p$. These results are fundamental for the study of ideals in Iwasawa algbras, so we will first carry them over to a characteristic zero setting.

\begin{lemma}\label{important}

Let $H$ be a closed subgroup of $G$, and let $I_1,\cdots,I_m,J$ be right ideals of $KH$. Then:

($i$) $I_1KG\cap\cdots\cap I_mKG=(I_1\cap\cdots\cap I_m)KG$.

($ii$) $JKG\cap KH=J$.

\end{lemma}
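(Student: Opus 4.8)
The two statements are the characteristic-zero analogues of the basic freeness/flatness facts about completed group algebras over subgroups, so the natural strategy is to reduce everything to the fact that $KG$ is a free right (and left) $KH$-module on a set of coset representatives. Concretely, the plan is: first fix a continuous section of $G \twoheadrightarrow H\backslash G$, i.e.\ a set $T \subseteq G$ of representatives for the right cosets $Hg$, chosen compatibly with the topology (using that $H$ is closed in the compact $p$-adic Lie group $G$, so $H\backslash G$ is profinite). One should record the decomposition $KG = \bigoplus_{t \in T} KH\cdot t$ as a topological direct sum of free rank-one left $KH$-modules; for finite-index $H$ this is literally a finite direct sum and everything is elementary, and for infinite index one passes to the inverse limit over open subgroups containing $H$, exactly as in the analogous characteristic-$p$ statements in \cite[Section 5]{nilpotent} that this lemma is carrying over. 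I would state this decomposition as the key preliminary (citing the characteristic-$p$ version and noting the proof is identical once $K$ replaces $k$, since nothing in the module-theoretic argument used that the coefficients had characteristic $p$).

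For part $(ii)$, given the decomposition $KG = \bigoplus_{t} KH\,t$ with $1 \in T$, an element $x \in JKG$ can be written uniquely as $\sum_t x_t\, t$ with $x_t \in KH$, and lying in $JKG$ forces each $x_t \in J$ (since $J$ is a right ideal of $KH$ and the decomposition is $KH$-linear on the left in the appropriate sense — here one is careful about which side: $J$ is a \emph{right} ideal, so one should instead use a \emph{left} coset decomposition $KG = \bigoplus_t t\cdot KH$ and write $x = \sum_t t\, x_t$, then $x \in JKG$ iff all $x_t \in J$). If additionally $x \in KH$, then all components with $t \neq 1$ vanish and $x = x_1 \in J$, giving $JKG \cap KH = J$; the reverse inclusion $J \subseteq JKG \cap KH$ is immediate since $1 \in KG$.

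For part $(i)$, the inclusion $(I_1 \cap \cdots \cap I_m)KG \subseteq I_1KG \cap \cdots \cap I_mKG$ is trivial. For the reverse, take $x \in \bigcap_j I_j KG$ and write $x = \sum_t t\, x_t$ in the left coset decomposition. Membership in $I_j KG$ forces $x_t \in I_j$ for every $t$ and every $j$ (by uniqueness of the coset expansion), hence $x_t \in \bigcap_j I_j$ for all $t$, so $x \in (\bigcap_j I_j)KG$. The only genuine subtlety — and the step I expect to be the main obstacle to write cleanly — is justifying the unique coset-expansion and the claim ``$x \in I_jKG \iff$ all components lie in $I_j$'' in the infinite-index case, where $KG$ is a completion rather than an honest direct sum: one must check that the relevant projections $KG \to t\,KH$ are continuous and that $I_jKG$ is closed (or at least that the expansion behaves well on the completion), which is where the $p$-adic topology and the profiniteness of $H\backslash G$ are really used. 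I would handle this by reducing to finite quotients $G/N$ with $N \trianglelefteq_o G$, $N \subseteq$ (a suitable open subgroup), proving the finite-level statement by the elementary argument above, and then passing to the inverse limit — again mirroring the structure of the proofs in \cite[Section 5]{nilpotent}, so that the write-up can legitimately be kept short by appealing to that parallel.
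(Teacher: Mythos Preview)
Your approach is workable but takes a genuinely different route from the paper. You argue via an explicit (topological) freeness decomposition of $KG$ over $KH$ on coset representatives, then read off both statements from uniqueness of coefficients. The paper instead proves the single abstract fact that $KG$ is \emph{faithfully flat} as a $KH$-module: flatness comes from Brumer's result \cite[Lemma 4.5]{brumer} that $\mathcal{O}G$ is projective over $\mathcal{O}H$ (then tensor up to $K$), and faithfulness from the observation that $KG \otimes_{KH} K \neq 0$. Part ($i$) then drops out by applying $-\otimes_{KH} KG$ to the exact sequence $0 \to \bigcap_j I_j \to KH \to \bigoplus_j KH/I_j$, and part ($ii$) is the standard contraction property of faithfully flat extensions \cite[Lemma 7.2.5]{McConnell}. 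The paper's route sidesteps exactly the subtlety you flagged --- justifying the coset expansion and the continuity of the coefficient projections in the infinite-index case --- by never needing an explicit basis; one citation to Brumer absorbs all the topological work. Your approach is more hands-on and would also succeed, but note a small slip in your side-correction: for a \emph{right} ideal $J$ of $KH$, the decomposition you actually want is the \emph{left} $KH$-module decomposition $KG = \bigoplus_t KH\cdot t$ with $t$ running over \emph{right} coset representatives, not the left-coset version you switched to; with right cosets one gets $JKG = \bigoplus_t J\,t$ directly from $J\cdot KH = J$, and then both ($i$) and ($ii$) follow as you described.
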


\begin{proof}

We will prove that $KG$ is faithfully flat over $KH$. Then part ($i$) follows from applying the functor $-\otimes_{KH}KG$ to the short exact sequence $0\to I_1\cap\cdots\cap I_m\to KH\to\underset{j\leq m}{\oplus}{\frac{KH}{I_j}}\to 0$, and part ($ii$) follows from \cite[Lemma 7.2.5]{McConnell}, taking $R=KH$, $S=KG$ and $M=\frac{KH}{J}$.\\

\noindent By \cite[Lemma 4.5]{brumer}, $\mathcal{O}G$ is projective over $\mathcal{O}H$, and hence it is flat. It follows immediately that $KG$ is flat over $KH$, so it remains to prove that $KG$ is faithful, i.e. if $M\neq 0$ is a $KH$-module, then $KG\otimes_{KH} M\neq 0$.\\

\noindent Note that $K\cong KH/(H-1)KH$ is a $KH$-module, so if $KG\otimes_{KH}M=0$ then we have the following isomorphisms of $K$-spaces:

\begin{center}
$0=KG\otimes_{KH} M\cong KG\otimes_{KH} K\otimes_K M$
\end{center}

But since $K$ is a field, every module over $K$ is faithfully flat, so if $M\neq 0$, it follows that $KG\otimes_{KH} K=0$, which is impossible since the natural map $KG\otimes_{KH} K\to K,r\otimes\alpha\to r\alpha+(G-1)KG$ is surjective. Hence $KG$ is faithfully flat as required.\end{proof}

Using this Lemma, we can now carry over the proofs of every result in \cite[Chapter 5]{nilpotent}, with the exception of \cite[Lemma 5.3]{nilpotent}, whose proof strongly depends on the assumption that the ground field has characteristic $p$.

\begin{lemma}\label{orbit}

Let $G$ be a $p$-valuable group, let $I$ be a two-sided ideal of $KG$ such that $I=J_1\cap\cdots\cap J_r$ for some right ideals $J_i$ of $KG$ forming a complete $G$-orbit via the conjugation action. Then if $I$ is faithful, each $J_i$ is faithful.

\end{lemma}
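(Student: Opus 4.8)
The plan is to exploit the fact that $I = J_1 \cap \cdots \cap J_r$ is faithful together with the $G$-orbit structure to show each individual $J_i$ is faithful. First I would record the basic setup: since the $J_i$ form a complete $G$-orbit under conjugation, for each pair $i,j$ there is some $g \in G$ with $g^{-1}J_i g = J_j$, and conjugation by $g$ is a $K$-algebra automorphism of $KG$ fixing $I$ (as $I$ is two-sided). Consequently, if some $J_i$ is faithful then all of them are, because faithfulness is preserved by algebra automorphisms that come from conjugation — indeed $g^{-1} - 1 \in g^{-1} J_i g$ iff $g^{-1}(h-1)g \in J_i$ for the appropriate $h$, and one checks the element $h-1$ lands in $J_i$ exactly when $(g h g^{-1}) - 1$ lands in $J_i$ after rescaling by the units $g, g^{-1}$. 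So the real content is to produce one faithful $J_i$.

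Next I would argue by contradiction: suppose no $J_i$ is faithful, so for each $i$ there is $1 \neq h_i \in G$ with $h_i - 1 \in J_i$. The key point is to use that $G$ is $p$-valuable, hence torsion-free, and that in a $p$-valuable group any nontrivial element generates a procyclic subgroup $\overline{\langle h_i \rangle} \cong \mathbb{Z}_p$. I want to descend to a situation where I can intersect. Since $h_i$ and $h_j$ lie in the torsion-free group $G$, some common power trick may be needed: replacing $h_i$ by a suitable power (note $h_i - 1 \in J_i$ implies $h_i^{p^k} - 1 \in J_i$ since $J_i$ is a right ideal, as $h_i^{p^k} - 1 = (h_i - 1)(h_i^{p^k - 1} + \cdots + 1)$), I can try to arrange that the elements $h_i$ all lie in a common closed subgroup or interact controllably. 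Then $\prod_i (h_i - 1)$, or rather the element witnessing non-faithfulness of the intersection, would have to lie in $I = \bigcap J_i$; the aim is to show some nontrivial $h - 1 \in I$, contradicting faithfulness of $I$.

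The main obstacle is precisely this last step: making the non-faithfulness witnesses $h_i \in G$ of the various $J_i$ cohere into a single nontrivial element $h$ with $h - 1 \in \bigcap J_i$. A product $(h_1 - 1)(h_2 - 1)\cdots(h_r - 1)$ does lie in $J_r$ (rightmost factor) but not obviously in the others, so the naive product fails. I expect the right move is to use the $G$-orbit structure more forcefully: pick $g_i \in G$ with $g_i^{-1} J_1 g_i = J_i$, so from $h_1 - 1 \in J_1$ we get $g_i^{-1}(h_1 - 1) g_i \in J_i$, i.e. $(g_i^{-1} h_1 g_i) - 1 \in J_i$ after noting $g_i^{-1}(h_1-1)g_i = g_i^{-1} h_1 g_i - 1$. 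Thus setting $h := h_1$, the conjugates $g_i^{-1} h g_i$ witness non-faithfulness of each $J_i$. Now I would consider the isolated orbit subgroup generated by $\{g_i^{-1} h g_i : i\}$, or better, observe that $h_1 - 1 \in J_1$ together with $J_1$ being a term in a finite $G$-orbit forces $\prod_{i} \big((g_i^{-1} h g_i) - 1\big)$-type expressions, or the two-sided ideal generated by $h_1 - 1$, to be controlled. The cleanest route is likely: the two-sided ideal $(h_1 - 1) KG \cap \cdots$ — since $I$ is two-sided and $I \subseteq J_1$, and $J_1$ sits in a $G$-orbit whose intersection is $I$, one shows the annihilator-type condition propagates, using Lemma \ref{important}(ii) to intersect with a suitable $KH$ where $H$ is the isolated closure of $\langle h_1 \rangle$, ultimately producing a nonzero element of $H - 1$ inside $I$. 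Pinning down exactly which subgroup $H$ to use and verifying the intersection is nonzero is where the genuine work lies; everything else is formal manipulation with right ideals and the faithful flatness already established.
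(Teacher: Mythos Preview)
Your proposal has a genuine gap at exactly the place you flag as ``where the genuine work lies'': you never find a mechanism to pass from the individual witnesses $h_i$ (or the conjugates $g_i^{-1}h_1g_i$) to a single nontrivial $h$ with $h-1\in\bigcap_i J_i$. The product $(h_1-1)\cdots(h_r-1)$ is not of the form $h-1$, and intersecting $I$ with $KH$ for $H$ the closure of $\langle h_1\rangle$ gives no reason why $I\cap KH$ should contain any $h_1^k-1$, since the other $J_i$ know nothing about $h_1$. The faithful-flatness lemma does not help here: it controls how ideals of $KH$ extend and contract, but says nothing about which group elements satisfy $g-1\in J_i$.

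The missing idea, and the one the paper uses, is to stop tracking single elements and instead look at the whole closed subgroup $J_i^{\dagger}:=\{g\in G: g-1\in J_i\}$. These are closed subgroups of $G$ (since $J_i$ is closed), they form a single $G$-orbit under conjugation because the $J_i$ do, and one checks $I^{\dagger}=\bigcap_i J_i^{\dagger}$. Now the key input is a structural fact about $p$-valuable groups: they are \emph{orbitally sound}, meaning that for any closed subgroup with finitely many $G$-conjugates, the intersection of those conjugates has finite index in each one (this is \cite[Proposition 5.9]{nilpotent}). Hence $I^{\dagger}$ has finite index in each $J_i^{\dagger}$; if $I$ is faithful then $I^{\dagger}=1$, so each $J_i^{\dagger}$ is finite, hence trivial since $G$ is torsion-free. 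Your sketch was circling this (``isolated orbit subgroup'') but never invoked orbital soundness, which is the actual engine of the argument.
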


\begin{proof}

For any ideal $J$ of $KG$, let $J^{\dagger}:=\{g\in G:g-1\in J\}$. Then clearly $J^{\dagger}$ is a closed subgroup of $G$, and if $J$ is a two-sided ideal then it is a normal subgroup. Clearly $J$ is faithful if and only in $J^{\dagger}=1$.\\

\noindent It is also clear that $(J_1)^{\dagger},\cdots,(J_r)^{\dagger}$ form a single $G$-orbit, and $I^{\dagger}=(J_1)^{\dagger}\cap\cdots\cap (J_r)^{\dagger}$.\\

\noindent But since $G$ is $p$-valuable, it follows from \cite[Proposition 5.9]{nilpotent} that $G$ is \emph{orbitally sound}, i.e. for any closed subgroup $H$ of $G$ with finitely many $G$-conjugates, the intersection of these conjugates is open in $H$. Therefore $I^{\dagger}$ has finite index in $(J_i)^{\dagger}$ for each $i$.

So if $I$ is faithful, then $(J_i)^{\dagger}$ is a finite subgroup of $G$. So since $G$ is torsionfree, this means that $(J_i)^{\dagger}=1$, and $J_i$ is faithful as required.\end{proof}

\noindent Let us now reintroduce some definitions from \cite[Section 5]{nilpotent}:

\begin{definition}\label{p-0def}

$(i)$. Given a prime ideal $P$ of $KG$, we say that $P$ is \emph{non-splitting} if for any open normal subgroup $U$ of $G$ controlling $P$, $P\cap KU$ is prime in $KU$.\\

$(ii)$. Let $\mathcal{P}$ be a property satisfied by two-sided ideals in $KH$, for $H$ any compact $p$-adic Lie group. Then given a right ideal $I$ of $KG$, we say that $I$ \emph{virtually} satisfies $\mathcal{P}$ if there exists an open subgroup $U$ of $G$ and a two sided ideal $J$ of $KU$ such that $J$ satisfies $\mathcal{P}$ and $I=JKG$.\\

\noindent In particular, $I$ is \emph{virtually non-splitting} if $I=PKU$ for some non-splitting prime ideal $P$ of $KU$.

\end{definition}

\begin{proposition}\label{p-01}

Let $P$ be a non-splitting prime ideal of $KG$, then $P\cap KP^{\chi}$ is prime in $KP^{\chi}$

\end{proposition}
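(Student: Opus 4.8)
The plan is to reduce the claim about $P\cap KP^{\chi}$ to the defining property of non-splitting prime ideals by replacing the controller subgroup $P^{\chi}$ with a cofinal system of open normal subgroups through which it is cut out. First I would recall from \cite[Theorem A]{controller} that $P^{\chi}$ is the intersection of all open subgroups controlling $P$, and that $P^{\chi}\trianglelefteq G$; moreover, since $G$ is a compact $p$-adic Lie group, $P^{\chi}$ is closed, so we may write $P^{\chi}=\bigcap_{n} U_n$ for a descending chain of open \emph{normal} subgroups $U_n$ of $G$, each of which controls $P$. The key point is then that $KP^{\chi}=\bigcap_n KU_n$ inside $KG$, and correspondingly $P\cap KP^{\chi}=\bigcap_n (P\cap KU_n)$.

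Next I would use the hypothesis that $P$ is non-splitting: for each $n$, since $U_n$ is an open normal subgroup controlling $P$, the ideal $Q_n:=P\cap KU_n$ is prime in $KU_n$. So $P\cap KP^{\chi}$ is realised as a descending intersection $\bigcap_n Q_n$ where each $Q_n$ is prime in $KU_n$ and, by Lemma \ref{important}(ii) applied with $H=U_{n+1}\leq U_n$, we have $Q_{n+1}=Q_n\cap KU_{n+1}$, i.e. these are compatible contractions. I would then want to pass this chain to the limit: the Iwasawa algebra $KP^{\chi}$ is itself (a localisation at $\pi$ of) a completed group algebra, it is Noetherian, and the $Q_n$ form a descending chain of ideals whose intersection is the candidate prime $P\cap KP^{\chi}$. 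The remaining task is to show that a descending intersection of primes $Q_n\subseteq KU_n$, compatible under contraction and "converging" to an ideal of the intersection algebra $KP^{\chi}=\varprojlim$-type object, is again prime.

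To prove primality directly I would argue as follows: suppose $a,b\in KP^{\chi}$ with $a(KP^{\chi})b\subseteq P\cap KP^{\chi}$; I must show $a\in P\cap KP^{\chi}$ or $b\in P\cap KP^{\chi}$. Since $KP^{\chi}=\bigcap_n KU_n$ and since $P$ controls down to each $U_n$, the inclusion $a(KP^{\chi})b\subseteq P$ should be upgraded to $a(KU_n)b\subseteq P\cap KU_n=Q_n$ for each $n$: indeed $a,b\in KU_n$, and because $U_n$ controls $P$ one can spread a containment in $P\cap KP^{\chi}$ out to $KU_n$ using that $P=(P\cap KU_n)KG$ together with a coset-representative argument of exactly the type used in Proposition \ref{invariant}. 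Granting this, primality of $Q_n$ in $KU_n$ gives, for each $n$, that $a\in Q_n$ or $b\in Q_n$; since the events are nested ($a\in Q_{n+1}\implies a\in Q_n$, as $Q_{n+1}\subseteq Q_n$... actually one must be careful about the direction here), a pigeonhole/cofinality argument forces one of $a,b$ to lie in $Q_n$ for all $n$, hence in $P\cap KP^{\chi}$.

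The main obstacle I anticipate is precisely this last pigeonhole step together with the "spreading out" of the product containment from $KP^{\chi}$ to each $KU_n$. The nesting $Q_{n+1}\subseteq Q_n$ runs the wrong way for a naive pigeonhole: knowing "$a\in Q_n$ or $b\in Q_n$" for all $n$ with $Q_n$ decreasing does not immediately give "$a\in \bigcap Q_n$ or $b\in\bigcap Q_n$". The fix is to work instead with the \emph{larger} algebras and exploit that the $U_n$ are normal and control $P$: one shows $a\notin P\cap KP^{\chi}$ implies $a\notin Q_N$ for some fixed $N$, whence $a\notin Q_n$ for all $n\geq N$ (here the nesting works correctly), forcing $b\in Q_n$ for all $n\geq N$, and then $b\in\bigcap_{n\geq N}Q_n=\bigcap_n Q_n=P\cap KP^{\chi}$ using that the chain is cofinal. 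Making the coset-representative spreading argument (the analogue of the computation in Proposition \ref{invariant}, but now for a product $a(KU_n)b$ rather than a single element) fully rigorous — in particular checking it is compatible with the multiplicative structure and with passing to the inverse limit — is where the real work lies; the rest is formal.
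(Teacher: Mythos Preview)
The genuine gap is exactly the step you flag but do not close: passing from $a(KP^{\chi})b\subseteq P$ to $a(KU_n)b\subseteq Q_n$. Your proposed mechanism --- a coset-representative argument ``of the type used in Proposition~\ref{invariant}'' together with $P=(P\cap KU_n)KG$ --- cannot work here, because $P^{\chi}$ has infinite index in $U_n$ in general, so there is no finite transversal to run the computation over. More structurally: if $I,J\trianglelefteq KP^{\chi}$ with $IJ\subseteq P\cap KP^{\chi}$, then $IKU_n$ and $JKU_n$ are only \emph{right} ideals of $KU_n$, and $(IKU_n)(JKU_n)=I\,(KU_n J)\,KU_n$ need not sit inside $(IJ)KU_n$ unless $J$ is $U_n$-stable. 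Nothing in your set-up forces that, so primality of $Q_n$ cannot be invoked. (Your pigeonhole worry, by contrast, dissolves: $a\in KP^{\chi}$ and $a\notin P\cap KP^{\chi}$ simply means $a\notin P$, hence $a\notin Q_n$ for \emph{every} $n$.)

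The argument that actually goes through --- and this is what the reference to \cite[Proposition~5.5]{nilpotent} together with Lemma~\ref{important} is pointing at --- avoids element-wise spreading entirely. One first shows $P\cap KP^{\chi}$ is $G$-stable, semiprime, and $G$-prime: for $G$-stable ideals $I,J$ of $KP^{\chi}$ one has $KG\cdot I=I\cdot KG$, so $(IKG)(JKG)=(IJ)KG\subseteq P$, and then Lemma~\ref{important}(ii) contracts back. Hence $P\cap KP^{\chi}=Q_1\cap\cdots\cap Q_r$ with the minimal primes $Q_i$ forming a single finite $G$-orbit. Now take an open normal $U\leq G$ controlling $P$ and contained in the kernel of the $G$-action on $\{Q_1,\dots,Q_r\}$; then each $Q_iKU$ is a genuine two-sided ideal of $KU$, and Lemma~\ref{important}(i) gives $P\cap KU=(P\cap KP^{\chi})KU=\bigcap_i Q_iKU$. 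Non-splitting says $P\cap KU$ is prime, so some $Q_iKU=P\cap KU$, and Lemma~\ref{important}(ii) yields $Q_i=P\cap KP^{\chi}$. The missing idea is thus to replace your arbitrary $a,b$ by the finitely many minimal primes, whose finite $G$-orbit lets you choose a \emph{single} open $U$ making the extended ideals two-sided; the limit over all $U_n$ is never needed.
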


\begin{proof}

This is the proof of \cite[Proposition 5.5]{nilpotent}, applied using Lemma \ref{important}.\end{proof}

\noindent Now, recall from \cite[Definition 5.6]{nilpotent} that if $R$ is a ring, $J_1,\cdots,J_r$ are right ideals of $R$ with intersection $I$, then $I=J_1\cap\cdots\cap J_r$ is an \emph{essential decomposition} for $I$ if the $R$-module embedding $\frac{R}{I}\to \frac{R}{J_1}\times\cdots\times \frac{R}{J_r}$ has essential image in the sense of \cite[Definition 2.2.1]{McConnell}.

\begin{proposition}\label{p-02}

Suppose that $G$ is pro-$p$, let $P$ be a prime ideal of $KG$, and let $P=I_1\cap\cdots\cap I_r$ be an essential decomposition for $P$ such that each $I_j$ is virtually prime and $I_1,\cdots I_r$ form a single $G$-orbit. If we assume that $r$ is as large as possible then each $I_j$ is virtually non-splitting.

\end{proposition}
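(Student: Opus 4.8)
The plan is to follow the structure of the corresponding result in the characteristic $p$ setting (\cite[Proposition 5.7]{nilpotent}), using Lemma \ref{important} to replace the faithful-flatness arguments that in \cite{nilpotent} relied on characteristic $p$. First I would fix a maximal essential decomposition $P=I_1\cap\cdots\cap I_r$ into virtually prime ideals forming a single $G$-orbit, and argue by contradiction: suppose some (hence, by the orbit condition and conjugation-invariance, each) $I_j$ fails to be virtually non-splitting. Writing $I_j=Q_jKU$ for an open $U\trianglelefteq G$ and a prime $Q_j$ of $KU$ with $U$ controlling $Q_j$, the failure of non-splitting means there is an open normal subgroup $V\leq U$ controlling $Q_j$ such that $Q_j\cap KV$ is not prime in $KV$. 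By shrinking $V$ (and using that $G$ is pro-$p$, so we may assume $V\trianglelefteq G$ after intersecting its finitely many conjugates, via orbital soundness as in Lemma \ref{orbit}), we get a genuine proper decomposition of $Q_j\cap KV$ inside $KV$.

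Next I would lift this decomposition back up. Since $V$ controls $Q_j$, we have $Q_j=(Q_j\cap KV)KU$, and since $Q_j\cap KV$ is not prime we may write $Q_j\cap KV=L_1\cap\cdots\cap L_s$ with $s\geq 2$, taking the $L_i$ to be a minimal prime-or-primary-style refinement; the point is to arrange that $\{L_iKG\}$ breaks $I_j$ further. Using Lemma \ref{important}(i) to commute intersections past induction, $I_j=Q_jKG=(Q_j\cap KV)KG=L_1KG\cap\cdots\cap L_sKG$, and each $L_iKG$ is again virtually prime (virtually, over $KV$). Then I would replace $I_j$ in the decomposition of $P$ by these pieces, together with their $G$-conjugates, to produce a decomposition of $P$ into strictly more than $r$ virtually prime ideals forming $G$-orbits. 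The remaining point is that this new decomposition is still \emph{essential}: this is where one needs that a refinement of one factor of an essential decomposition into an essential decomposition of that factor yields an essential decomposition overall — a routine transitivity property of essential submodule embeddings, which can be checked directly from \cite[Definition 2.2.1]{McConnell} since $\frac{KG}{I_j}\hookrightarrow\prod_i\frac{KG}{L_iKG}$ is essential and essentiality composes with direct products. This contradicts maximality of $r$, completing the proof.

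The main obstacle I expect is bookkeeping around the orbit condition and essentiality simultaneously: when we refine $I_j$ we must also refine all its $G$-conjugates compatibly (so that the enlarged family is still a union of $G$-orbits), and we must verify the enlarged decomposition remains essential rather than merely a decomposition. Concretely, the subtlety is that $Q_j\cap KV$ need not decompose $G$-equivariantly inside $KV$; one fixes this by passing to the $G$-orbit of the decomposition and invoking that an essential decomposition refined orbit-by-orbit stays essential, together with the fact (from orbital soundness, cf. the proof of Lemma \ref{orbit}) that conjugation permutes these pieces with finite stabiliser behaviour compatible with the $p$-valuable structure. A secondary technical nuisance is ensuring the refined pieces $L_i$ can be chosen so that each $L_iKG$ is genuinely virtually \emph{prime} and not just virtually semiprime — this is handled by taking $L_1,\dots,L_s$ to be the minimal primes over $Q_j\cap KV$ in $KV$, which are finite in number since $KV$ is Noetherian, and observing that their intersection still equals $Q_j\cap KV$ when the latter is semiprime; semiprimeness here follows because $Q_j\cap KV$ arises as a contraction related to the prime $Q_j$, as in \cite[Proposition 5.5]{nilpotent}/Proposition \ref{p-01}.
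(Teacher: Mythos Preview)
Your proposal is correct and matches the paper's approach exactly: the paper's proof is the single sentence ``This is the proof of \cite[Theorem 5.7]{nilpotent}, applied using Lemma \ref{important},'' and your sketch is precisely a fleshing-out of that argument, with Lemma \ref{important} supplying the faithful-flatness inputs in characteristic zero. (Minor point: the paper cites it as Theorem 5.7, not Proposition 5.7.)
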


\begin{proof}

This is the proof of \cite[Theorem 5.7]{nilpotent}, applied using Lemma \ref{important}.\end{proof}

\subsection{$J$-Ideals}

Recall that we are interested in \emph{primitive ideals} in $KG$, that is ideals of the form $Ann_{KG}M$ for some irreducible $KG$-module $M$. It is well known that these ideals are prime.

\begin{proposition}\label{rational}

Let $P$ be a primitive ideal of $KG$, then $Z(KG/P)$ is finite dimensional over $K$.

\end{proposition}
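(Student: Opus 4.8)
The plan is to show that $Z(KG/P)$ embeds into the endomorphism ring of the irreducible module $M$ defining $P$, and then invoke a Schur-type finiteness argument. First I would recall that $P = \mathrm{Ann}_{KG}(M)$ for an irreducible $KG$-module $M$, so $KG/P$ acts faithfully on $M$ and in particular $M$ is a faithful, irreducible $KG/P$-module. Any central element $z \in Z(KG/P)$ then acts on $M$ as a $KG/P$-module endomorphism, giving a ring homomorphism $Z(KG/P) \to \mathrm{End}_{KG/P}(M) = \mathrm{End}_{KG}(M)$; this map is injective because the action of $KG/P$ on $M$ is faithful and $z$ is central (if $z$ acts as $0$ then $z \in \mathrm{Ann}(M) = 0$ in $KG/P$). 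By Schur's lemma $D := \mathrm{End}_{KG}(M)$ is a division $K$-algebra, and the image of $Z(KG/P)$ lands in the centre $Z(D)$, which is a field extension of $K$. So it suffices to prove that $Z(D)$ — or at least the subfield generated by the image — is finite-dimensional over $K$.

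The key input for the finiteness is that $KG$ is a Noetherian $K$-algebra which is \emph{countably generated} (indeed $\mathcal{O}G$ is topologically finitely generated, so $KG$ has a countable $K$-basis), hence $KG/P$ and any of its quotient modules are countably generated over $K$, and $M$, being a cyclic module, has at most countable $K$-dimension. This is the setting of the \emph{generalised Schur lemma} (due to Quillen, or Dixmier in the enveloping-algebra case, and stated for Iwasawa algebras in the literature on this subject): if $A$ is a $K$-algebra of at most countable dimension over an uncountable field $K$, and $M$ is a simple $A$-module, then $\mathrm{End}_A(M)$ is algebraic over $K$. Since $K$ is a finite extension of $\mathbb{Q}_p$, it is uncountable, so this applies. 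Combined with the fact that $Z(D)$ is commutative, we conclude $Z(D)$ is an algebraic field extension of $K$; and since $Z(KG/P)$ is itself a Noetherian ring (being a commutative subring — one must be slightly careful here, but central subrings of Noetherian rings satisfying a polynomial identity, or here simply the image being a field) its image is a field algebraic over $K$, hence a finite extension.

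There is one gap to close carefully: Quillen's lemma gives that $\mathrm{End}_A(M)$ is \emph{algebraic} over $K$, not a priori finite-dimensional, and an algebraic division algebra over $K$ need not be finite over $K$. But we only need the \emph{centre} $Z(KG/P)$, whose image is a \emph{commutative} algebraic $K$-algebra that is a domain, i.e. an algebraic field extension $L/K$. The final step is then to argue $L$ is in fact finite over $K$: this follows because $Z(KG/P)$ is a Noetherian ring (it is a subring of the Noetherian ring $KG/P$ — here one uses that $KG$ is a Noetherian PI-free setting requires an argument, or alternatively one uses that $KG/P$ has a non-commutative valuation so its centre is a Noetherian valuation ring, or most cleanly: a field $L$ algebraic over $K$ which is finitely generated as a ring over $K$ is finite over $K$ by the Artin–Tate lemma / Zariski's lemma, and $Z(KG/P)$ is a finitely generated $K$-algebra since $KG/P$ is Noetherian and affine).

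I expect the main obstacle to be precisely this last point — upgrading ``algebraic'' to ``finite-dimensional'' — which requires knowing that $Z(KG/P)$ is a finitely generated $K$-algebra, or equivalently controlling the commutative Noetherian structure of the centre; the uncountability of $K$ and Quillen's lemma handle the rest essentially formally. In the write-up I would isolate the generalised Schur lemma as the one external ingredient and then give the short commutative-algebra argument ($Z(KG/P)$ is a Noetherian domain, algebraic over $K$, hence a finite field extension) as the conclusion.
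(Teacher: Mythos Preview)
Your overall strategy --- embed $Z(KG/P)$ into $\mathrm{End}_{KG}(M)$ and then bound the latter --- is exactly the one the paper uses. The problem is the mechanism you propose for the bound.

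The claim that $KG$ has countable $K$-dimension is false, and this breaks both halves of your argument. The group $G$ is a $p$-adic Lie group of positive dimension, hence uncountable (already $|\mathbb{Z}_p| = 2^{\aleph_0}$), and the distinct group elements $g \in G$ are $K$-linearly independent in $KG$ since the map $K[G] \to KG$ is injective. Thus $\dim_K KG \geq |G| = 2^{\aleph_0} = |K|$, which is precisely the boundary at which Quillen's lemma gives nothing: you cannot conclude that $\mathrm{End}_{KG}(M)$ is algebraic over $K$. Topological finite generation of $\mathcal{O}G$ does not translate into countable algebraic generation of $KG$; the completion is what makes $KG$ large. For the same reason $KG$ is not an affine $K$-algebra (think of $\mathcal{O}[[T]] \otimes_{\mathcal{O}} K$ in the rank-one case), so your fallback Artin--Tate argument for finite generation of the centre also has no traction.

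The paper closes this gap not by a cardinality trick but by invoking a genuinely deep $p$-adic Schur lemma due to Dospinescu--Schraen: for any irreducible $KG$-module $M$, the algebra $\mathrm{End}_{K[G]}(M)$ of $G$-equivariant $K$-linear endomorphisms is finite-dimensional over $K$. This is a hard theorem specific to $p$-adic Lie groups (its proof passes through locally analytic representation theory), and it is the one external input the paper needs. Once you have it, the injection $Z(KG/P) \hookrightarrow \mathrm{End}_{K[G]}(M)$ finishes the proof in one line. There is no elementary substitute along the lines you sketched.
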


\begin{proof}

By definition, $P=Ann_{KG}M$ for some irreducible $KG$-module $M$. Then it follows from \cite[Theorem 1.1(1)]{endomorphism} that the algebra End$_{K[G]}M$ of $G$-equivariant $K$-linear endomorphisms of $M$ is a finite dimensional $K$-algebra.\\

\noindent Now, there exists a well defined, injective homomorphism of $K$-algebras: 

\begin{center}
$Z(KG/P)\to$ End$_{K[G]}M, z+P\mapsto (\phi_z:M\to M, m\mapsto zm)$
\end{center} 

\noindent so it follows immediately that $Z(KG/P)$ is finite dimensional over $K$.\end{proof}

So, we can safely assume that $P$ is a prime ideal of $KG$ such that $Z(KG/P)$ is a finite field extension of $K$, however it will be necessary for us to work in slightly more generality:

\begin{definition}\label{J-ideal}

Let $G$ be a compact $p$-adic Lie group, $Z:=Z(G)$. Given a right ideal $I$ of $KG$, we say that $I$ is a \emph{$J$-ideal} of $KG$ if $KZ(G)/I\cap KZ(G)$ is finite dimensional over $K$.

\end{definition}

Using Proposition \ref{rational}, it is clear that primitive ideals are $J$-ideals. The reason why we prefer to work with ideals of this form is because even when $Z(KG/P)$ is finite dimensional, after passing to a smaller subgroup $H$, there is no reason why $Z(KH/P\cap KH)$ should be finite dimensional. So it makes sense to only consider elements of the full centre $Z(G)$.

\begin{lemma}\label{virtual}

Let $U$ be an open normal subgroup of $G$, $P$ a prime ideal of $KU$ such that $PKU$ is a $J$-ideal of $KG$. Then $P$ is a $J$-ideal of $KU$.

\end{lemma}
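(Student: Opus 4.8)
The plan is to show that the hypothesis "$PKU$ is a $J$-ideal of $KG$" transfers the finiteness over $K$ of the appropriate centre from $G$ down to $U$. First I would recall that we need $KZ(U)/(P\cap KZ(U))$ to be finite dimensional over $K$, whereas the hypothesis gives that $KZ(G)/(PKU\cap KZ(G))$ is finite dimensional over $K$. The key observation is that $Z(G)$ is a subgroup of $Z(U)$ of finite index: since $U$ is open in $G$, it has finite index, and any element of $Z(U)$ lies in the centraliser of $U$ in $G$, which (again because $[G:U]<\infty$ and $G$ is a pro-$p$/compact $p$-adic Lie group, so the conjugation orbits are finite) is commensurable with $Z(G)$; more directly, $Z(U)\cap Z(G)$ has finite index in $Z(U)$ because $Z(U)$ normalises a finite-index subgroup pointwise. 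This reduces the problem to a statement about finite extensions of group rings of commutative $p$-adic groups.

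Next, I would use Lemma \ref{important}(ii): applying it with $H=KZ(G)$ sitting inside $KU$ (note $Z(G)\leq U$ since $Z(G)\leq Z(U)\leq U$), we get $PKU\cap KZ(G)=P\cap KZ(G)$ once we know $P\cap KZ(G)$ generates $PKU$ upon extension — more carefully, the point is simply that for a right ideal $J$ of $KZ(G)$ one has $JKU\cap KZ(G)=J$, so $PKU\cap KZ(G)\supseteq (P\cap KZ(G))KU\cap KZ(G)=P\cap KZ(G)$, and the reverse inclusion is automatic. Hence the hypothesis says precisely that $KZ(G)/(P\cap KZ(G))$ is finite dimensional over $K$. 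Then I would compare $KZ(G)$ and $KZ(U)$: since $Z(G)$ has finite index in $Z(U)$ (or at least a finite-index subgroup $A$ of $Z(U)$ lies in $Z(G)$), $KZ(U)$ is a finitely generated module over $K Z(G)$, in fact free of rank $[Z(U):Z(G)]$. Therefore $KZ(U)/(P\cap KZ(U))$ is a finitely generated module over the image of $KZ(G)$, and that image is a quotient of $KZ(G)/(P\cap KZ(G))$, hence finite dimensional over $K$; so $KZ(U)/(P\cap KZ(U))$ is finite dimensional over $K$, which is exactly the assertion that $P$ is a $J$-ideal of $KU$.

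The main obstacle I anticipate is the commensurability claim $[Z(U):Z(G)\cap Z(U)]<\infty$, i.e. that passing to the open subgroup $U$ does not enlarge the centre "too much". This should follow from the orbital soundness / finite-orbit properties of $p$-valuable (or compact $p$-adic Lie) groups already invoked in Lemma \ref{orbit}: an element $z\in Z(U)$ has a $G$-conjugacy orbit of size dividing $[G:U]$, so $C_G(z)$ is open, and $Z(U)\cap \bigcap_{g}C_G(g\text{-conjugates})$ is a finite-index subgroup of $Z(U)$ lying in $Z(G)$; alternatively one argues that $Z(U)$ normalises $U$ and centralises it, and uses that $G/U$ is finite to produce a finite-index central subgroup. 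Once that is in place, the rest is the routine module-finiteness and Lemma \ref{important} bookkeeping sketched above.
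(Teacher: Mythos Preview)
Your proposal has the key containment backwards. You assert $Z(G)\subseteq Z(U)$ (and in particular $Z(G)\leq U$), but this fails already for $G=\mathbb{Z}_p$, $U=p\mathbb{Z}_p$, where $Z(G)=G\not\subseteq U=Z(U)$. The correct fact, special to $p$-valuable groups, goes the other way: $Z(U)\subseteq Z(G)$ for every open subgroup $U$. Indeed, if $z\in Z(U)$ then $(z,g^{p^n})=1$ for all $g\in G$ as soon as $G^{p^n}\subseteq U$, and in a $p$-valued group this forces $(z,g)=1$. This is precisely \cite[Lemma 8.4(b)]{nilpotent}, which is also invoked in the proof of Lemma \ref{centraliser} in this paper.

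Because of this reversal, your application of Lemma \ref{important}(ii) with ``$H=Z(G)$ inside $KU$'' is illegitimate (one needs $H\leq U$), and your module-finiteness step ``$KZ(U)$ is finite over $KZ(G)$'' points the wrong way. Once the correct inclusion $Z(U)\subseteq Z(G)$ is used, the entire commensurability and orbital-soundness discussion becomes unnecessary and the argument collapses to two lines, which is what the paper does: by Lemma \ref{important}(ii) one has $PKG\cap KU=P$, giving an injection $KU/P\hookrightarrow KG/PKG$; since $Z(U)\subseteq Z(G)$, this restricts to an injection
\[
\frac{KZ(U)}{P\cap KZ(U)}\;\hookrightarrow\;\frac{KZ(G)}{PKG\cap KZ(G)},
\]
and the right-hand side is finite dimensional over $K$ by hypothesis.
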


\begin{proof}

Since $PKG\cap KU=P$ by Lemma \ref{important}($ii$), we have an injection of $KU$-modules $KU/P\xhookrightarrow{}KG/PKG$, and this map sends $KZ(U)/KZ(U)\cap P$ to $\frac{KZ(U)+KZ(G)\cap PKG}{KZ(G)\cap PKG}$.\\

\noindent But $Z(U)\subseteq Z(G)$ by \cite[Lemma 8.4(b)]{nilpotent}, so this image is contained in $KZ(G)/KZ(G)\cap PKG$, which is finite dimensional over $K$ by the definition of a $J$-ideal. 

Hence $KZ(U)/KZ(U)\cap P$ is finite dimensional over $K$ as required.\end{proof}

\begin{lemma}\label{decomposition}

Suppose $I$ is a $J$-ideal in $KG$ and $I\subseteq J$ for some right ideal $J$ of $KG$. Then $J$ is a $J$-ideal of $KG$.

\end{lemma}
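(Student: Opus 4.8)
The plan is to use the definition of a $J$-ideal directly together with the fact that a quotient of a finite-dimensional $K$-vector space is finite-dimensional. Write $Z := Z(G)$. By hypothesis $I$ is a $J$-ideal, so $KZ/(I \cap KZ)$ is finite-dimensional over $K$. Since $I \subseteq J$, we have $I \cap KZ \subseteq J \cap KZ$ as ideals of $KZ$, and hence there is a surjection of $K$-vector spaces
\begin{center}
$KZ/(I\cap KZ) \twoheadrightarrow KZ/(J\cap KZ)$.
\end{center}
The domain is finite-dimensional over $K$, so its quotient $KZ/(J\cap KZ)$ is finite-dimensional over $K$ as well.

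First I would note that $J\cap KZ$ is indeed a $K$-subspace of $KZ$ (it is a right ideal of $KZ$, but we only need the linear structure), and that the inclusion $I\cap KZ\subseteq J\cap KZ$ follows at once from $I\subseteq J$. Then the canonical map $KZ/(I\cap KZ)\to KZ/(J\cap KZ)$ induced by the identity on $KZ$ is a well-defined $K$-linear surjection. Applying the elementary fact that any quotient space of a finite-dimensional space is finite-dimensional gives $\dim_K KZ/(J\cap KZ)<\infty$, which is precisely the assertion that $J$ is a $J$-ideal of $KG$.

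There is no real obstacle here: the statement is essentially a formal consequence of the definition, and the only thing to be careful about is the direction of the inclusion, namely that enlarging the ideal $I$ to $J$ shrinks the quotient of $KZ$ rather than enlarging it. (This is exactly why the $J$-ideal property is an "upward-closed" condition on right ideals, which is what makes it a convenient hypothesis to propagate.) The proof is therefore a single short paragraph once the surjection is written down.
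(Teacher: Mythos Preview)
Your proof is correct and essentially identical to the paper's own argument: both observe that $I\cap KZ\subseteq J\cap KZ$ and use the resulting $K$-linear surjection $KZ/(I\cap KZ)\twoheadrightarrow KZ/(J\cap KZ)$ to conclude that the target is finite-dimensional. The paper phrases this slightly differently by first writing down the surjection $KG/I\to KG/J$ and then noting that $KZ/(KZ\cap J)$ is the image of $KZ/(KZ\cap I)$ under it, but the content is the same.
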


\begin{proof}

Setting $Z:=Z(G)$, since $I$ is a $J$-ideal, $KZ/I\cap KZ$ is finite dimensional over $K$.\\

But $I\subseteq J$, so there is a surjection $KG/I\to KG/J$ of $KG$-modules, and $KZ/KZ\cap J$ is the image of $KZ/KZ\cap I$ under this map.\\

\noindent So since $KZ/KZ\cap J$ is the image of a finite dimensional $K$-vector space under a $K$-linear map, it is also finite dimensional over $K$, making $J$ a $J$-ideal.\end{proof}

\begin{theorem}\label{split}

Let $G$ be a $p$-valuable group, let $A$ be a closed subgroup of $G$, and suppose that all faithful, virtually non-splitting right $J$-ideals of $KG$ are controlled by $A$. Then all faithful, prime $J$-ideals of $KG$ are controlled by $A$.

\end{theorem}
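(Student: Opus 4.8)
The plan is to reduce an arbitrary faithful prime $J$-ideal $P$ to a finite collection of faithful virtually non-splitting $J$-ideals, to which the hypothesis applies, and then reassemble the control statement. First I would use the orbital soundness / essential decomposition machinery from the previous subsection: since $KG$ is Noetherian, the prime $P$ admits an essential decomposition $P = I_1 \cap \cdots \cap I_r$ into right ideals which are virtually prime and form a single $G$-orbit under conjugation, with $r$ chosen maximal. By Proposition~\ref{p-02} each $I_j$ is then virtually non-splitting. Because $P$ is faithful, Lemma~\ref{orbit} shows each $I_j$ is faithful, and because $P$ is a $J$-ideal and $P \subseteq I_j$, Lemma~\ref{decomposition} shows each $I_j$ is a $J$-ideal. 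Hence the hypothesis applies to each $I_j$: every $I_j$ is controlled by $A$.

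Next I would translate "controlled by $A$" into the algebra. Writing $J_j := I_j \cap KA$, control by $A$ means $I_j = J_j KG$ for each $j$. Then using Lemma~\ref{important}($i$) (which is the characteristic-zero analogue of the corresponding result in \cite{nilpotent}, valid here by faithful flatness of $KG$ over $KA$), we compute
\begin{center}
$P = I_1 \cap \cdots \cap I_r = (J_1)KG \cap \cdots \cap (J_r)KG = (J_1 \cap \cdots \cap J_r)KG$.
\end{center}
Setting $J := J_1 \cap \cdots \cap J_r \subseteq KA$, this exhibits $P = JKG$, and since $J = (J_1 \cap \cdots \cap J_r) \subseteq KA$ we certainly have $J \subseteq P \cap KA$; conversely $P \cap KA = JKG \cap KA = J$ by Lemma~\ref{important}($ii$). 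Therefore $P = (P \cap KA)KG$, i.e.\ $P$ is controlled by $A$, which is exactly the desired conclusion.

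The step I expect to be the main obstacle is the first one: producing the essential decomposition of $P$ with the right properties and invoking Proposition~\ref{p-02} correctly. One has to be careful that $G$ being $p$-valuable (hence pro-$p$ and orbitally sound) is genuinely what is needed to guarantee that a maximal essential decomposition into virtually prime, $G$-conjugate pieces exists and that the pieces are virtually non-splitting; this is where the hypothesis "$G$ is $p$-valuable" is used, and one must check the hypotheses of Proposition~\ref{p-02} (that $G$ is pro-$p$, that the $I_j$ are virtually prime, that they form a single $G$-orbit, and that $r$ is maximal) are all met. A secondary subtlety is making sure the $J$-ideal property really does pass to the $I_j$: this is immediate from Lemma~\ref{decomposition} since each $I_j \supseteq P$, but it is worth noting explicitly because it is precisely the reason Definition~\ref{J-ideal} was phrased in terms of the full centre $Z(G)$ rather than $Z$ of a subgroup. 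Once these reductions are in place, the reassembly via Lemma~\ref{important} is routine.
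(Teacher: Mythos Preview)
Your proposal is correct and follows essentially the same route as the paper's own proof: take a maximal essential decomposition $P=I_1\cap\cdots\cap I_r$ into a $G$-orbit of virtually prime right ideals, use Proposition~\ref{p-02}, Lemma~\ref{orbit} and Lemma~\ref{decomposition} to see each $I_j$ is faithful, virtually non-splitting and a $J$-ideal, apply the hypothesis, and reassemble via Lemma~\ref{important}. The only cosmetic difference is that the paper observes directly that $P\cap KA=(I_1\cap KA)\cap\cdots\cap(I_r\cap KA)$ and then applies Lemma~\ref{important}($i$), whereas you go through $J:=\bigcap J_j$ and invoke Lemma~\ref{important}($ii$) as well; both computations are equivalent.
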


\begin{proof}

This is similar to the proof of \cite[Theorem 5.8]{nilpotent}.\\

\noindent Let $P$ be a faithful, prime $J$-ideal of $KG$, and let $P=I_1\cap\cdots\cap I_m$ be an essential decomposition for $P$, with each $I_j$ virtually prime, and $I_1,\cdots,I_r$ forming a single $G$-orbit.\\ 

\noindent Setting $r=1$, $I_1=P$, it is clear that such a decomposition exists, so we will assume that $r$ is maximal such that a decomposition of this form exists. We know that $r$ is finite because $KG/P$ has finite uniform dimension in the sense of \cite{McConnell}.\\

\noindent So, by Theorem \ref{p-02}, each $I_j$ is a virtually non-splitting right ideal of $KG$, and since $P\subseteq I_j$ it follows from Lemma \ref{decomposition} that $I_j$ is a $J$-ideal. Furthermore, since $P$ is faithful, it follows from Lemma \ref{orbit} that each $I_j$ is faithful.

Therefore, by assumption, $I_j$ is controlled by $A$, so $I_j=(I_j\cap KA)KG$ for each $j$. So since $P=I_1\cap\cdots\cap I_r$, we have that\\

$(P\cap KA)KG=((I_1\cap KA)\cap\cdots\cap (I_r\cap KA))KG$\\

$=(I_1\cap KA)KG\cap\cdots\cap (I_r\cap KA)KG=I_1\cap\cdots\cap I_r=P$ by Lemma \ref{important}($i$).\\ 

\noindent Thus $P$ is controlled by $A$ as required.\end{proof}

\subsection{Controlling prime $J$-ideals}

Recall from section 1 the definition of the second centre $Z_2(G)=\{g\in G:(g,G)\subseteq Z(G)\}$.

\begin{lemma}\label{centraliser}

For $G$ any $p$-valuable group, $U\leq_o G$, $C_U(Z_2(U))=C_G(Z_2(G))\cap U$.

\end{lemma}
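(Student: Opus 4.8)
The statement is a purely group-theoretic fact about the upper central series behaving well under passage to open subgroups, and the key input is that for a $p$-valuable (hence torsionfree, nilpotent-by-finite, and in fact here nilpotent) group $G$, each $Z_i(G)$ is \emph{isolated} in $G$, i.e. $g^{p^k}\in Z_i(G)$ forces $g\in Z_i(G)$. The first thing I would establish is the auxiliary claim that $Z_i(U) = Z_i(G)\cap U$ for every open subgroup $U\leq_o G$ and every $i\geq 0$. One inclusion, $Z_i(G)\cap U\subseteq Z_i(U)$, is immediate by induction on $i$: if $g\in Z_i(G)\cap U$ then $(g,U)\subseteq (g,G)\cap U\subseteq Z_{i-1}(G)\cap U\subseteq Z_{i-1}(U)$. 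For the reverse inclusion $Z_i(U)\subseteq Z_i(G)$, I would again induct on $i$: take $g\in Z_i(U)$, so $(g,U)\subseteq Z_{i-1}(U)\subseteq Z_{i-1}(G)$ by the inductive hypothesis. For an arbitrary $h\in G$, some power $h^{p^k}$ lies in $U$ (as $U$ is open and $G$ pro-$p$), so $(g,h^{p^k})\in Z_{i-1}(G)$; using that $Z_{i-1}(G)$ is normal and the standard commutator identity $(g,h^{p^k})\equiv (g,h)^{p^k}\pmod{(g,h,\dots)}$ inside the nilpotent group $G/Z_{i-2}(G)$ — more carefully, working modulo $Z_{i-1}(G)$ the map $x\mapsto (g,x)$ is a homomorphism on the relevant quotient — one gets $(g,h)^{p^k}\in Z_{i-1}(G)$, and then isolatedness of $Z_{i-1}(G)$ gives $(g,h)\in Z_{i-1}(G)$. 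Hence $g\in Z_i(G)$. (In the case $i=2$ that is all that is needed for the lemma, and there the argument is cleanest: $x\mapsto (g,x)$ is genuinely a homomorphism $G\to Z(G)$ when $(g,G)\subseteq Z(G)$, so $(g,h)^{p^k}=(g,h^{p^k})\in Z(G)$ and isolatedness of $Z(G)$ finishes it.)

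With $Z_2(U)=Z_2(G)\cap U$ in hand, the lemma reduces to comparing centralisers. I would show $C_U(Z_2(U)) = C_G(Z_2(G))\cap U$ by double inclusion. For $\supseteq$: if $g\in C_G(Z_2(G))\cap U$, then $g$ commutes with all of $Z_2(G)$, in particular with $Z_2(G)\cap U = Z_2(U)$, and $g\in U$, so $g\in C_U(Z_2(U))$. For $\subseteq$: suppose $g\in U$ commutes with every element of $Z_2(U)=Z_2(G)\cap U$; I must show $g$ commutes with every $z\in Z_2(G)$, not just those in $U$. Here I use that $U$ is open in the pro-$p$ group $G$, so $z^{p^k}\in U$ for some $k$, hence $z^{p^k}\in Z_2(G)\cap U=Z_2(U)$, so $g$ commutes with $z^{p^k}$, i.e. $(g,z^{p^k})=1$. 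Now $z\in Z_2(G)$ means $(g,z)\in Z(G)$, and modulo $Z(G)$ — equivalently, since $(g,-)$ restricted to $Z_2(G)$ lands in the central subgroup $Z(G)$ and is a homomorphism there — we get $1=(g,z^{p^k})=(g,z)^{p^k}$. Since $G$ is torsionfree, $(g,z)=1$. Thus $g\in C_G(Z_2(G))\cap U$.

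The main obstacle is the reverse inclusion $C_U(Z_2(U))\subseteq C_G(Z_2(G))$: knowing $g$ centralises the (possibly much smaller) subgroup $Z_2(G)\cap U$ must be leveraged into centralising all of $Z_2(G)$, and the only tool is that every element of $Z_2(G)$ has a $p$-power landing in $U$ together with the torsionfreeness of $G$ to cancel that $p$-power from the commutator. The step "$(g,z^{p^k})=(g,z)^{p^k}$" deserves care: it is valid precisely because $(g,z)$ is \emph{central} in $G$ (as $z\in Z_2(G)$), so $x\mapsto (g,x)$ is a homomorphism from the abelian group $Z_2(G)/Z(G)$ — actually already from $\langle z\rangle$ — into $Z(G)$; writing $z^{p^k}$ out and using $(g,ab)=(g,a)(g,b)^{a}=(g,a)(g,b)$ when the values are central gives the claim directly. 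Once these homomorphism-and-isolatedness manipulations are set up, everything is routine; I would present the $i=2$ version of $Z_2(U)=Z_2(G)\cap U$ inline rather than proving the general statement, since that is all the lemma requires.
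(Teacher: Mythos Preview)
Your overall structure matches the paper's, and your argument for the inclusion $C_U(Z_2(U))\subseteq C_G(Z_2(G))$ is correct --- indeed slightly more elementary than the paper's version, which invokes the $p$-valuation identity where you use centrality of $(g,z)$ and torsionfreeness directly. However, there is a genuine gap in your proof of $Z_2(U)\subseteq Z_2(G)$, and you do need this inclusion for the other direction $C_G(Z_2(G))\cap U\subseteq C_U(Z_2(U))$.

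The circularity is in your parenthetical for $i=2$: you say $x\mapsto (g,x)$ is ``genuinely a homomorphism $G\to Z(G)$ when $(g,G)\subseteq Z(G)$'', but $(g,G)\subseteq Z(G)$ is exactly the statement $g\in Z_2(G)$ you are trying to establish. At this point you only know $(g,U)\subseteq Z(G)$; for an arbitrary $h\in G$ there is no reason $(g,h)$ should be central, so the identity $(g,h^{p^k})=(g,h)^{p^k}$ is not available to you. (The general-$i$ sketch has the same problem: ``working modulo $Z_{i-1}(G)$ the map $x\mapsto (g,x)$ is a homomorphism'' presupposes that the image of $g$ in $G/Z_{i-1}(G)$ already lies in the second centre there, which is again what is to be shown.) The paper avoids this by passing to $\bar G=G/Z(G)$, which is again $p$-valuable since $Z(G)$ is isolated, and applying the identity $\bar\omega\bigl((g,h^{p^n})\bigr)=\bar\omega\bigl((g,h)\bigr)+n$ from \cite[Proposition~25.1]{Schneider}. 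This holds in any $p$-valued group with \emph{no} centrality hypothesis, so from $(g,h^{p^n})\in Z(G)$ one reads off $\bar\omega((g,h))=\infty$ and hence $(g,h)\in Z(G)$. Equivalently, the missing ingredient is that centralisers in $p$-valuable groups are isolated; your commutator-identity manipulation does not supply this on its own.
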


\begin{proof}

\noindent Firstly, we will show that $Z_2(U)\subseteq Z_2(G)$:\\ 

\noindent If $g\in Z_2(U)$ then $(g,U)\subseteq Z(U)$ by definition, and $Z(U)\subseteq Z(G)$ by \cite[Lemma 8.4(b)]{nilpotent}, so $(g,U)\subseteq Z(G)$.

But we know that $G^{p^n}\subseteq U$ for some $n\in\mathbb{N}$, so for all $h\in G$, $(g,h^{p^n})\in Z(G)$.\\

\noindent But $Z(G)$ is an isolated normal subgroup of $G$ by \cite[Lemma 8.4(a)]{nilpotent}, so by \cite[\rom{4}.3.4.2]{Lazard}, $\frac{G}{Z(G)}$ carries a $p$-valuation $\bar{\omega}$. So since $(g,h^{p^n})\in Z(G)$, we have that $\bar{\omega}((g,h^{p^n}))=\infty$. 

But $\bar{\omega}((g,h^{p^n}))=\bar{\omega}(g,h)+n$ by \cite[Proposition 25.1]{Schneider}, and hence $\bar{\omega}((g,h))=\infty$. Thus $(g,h)\in Z(G)$ for all $h\in G$, and $g\in Z_2(G)$ as required.\\

\noindent Now, given $g\in C_U(Z_2(U))$, $(g,Z_2(U))=1$, so if $h\in Z_2(G)$ then $h^{p^n}\in Z_2(G)\cap U=Z_2(U)$, so $(g,h^{p^n})=1$. So since $G$ is $p$-valued, applying \cite[Proposition 25.1]{Schneider} again gives that $(g,h)=1$, thus $g\in C_G(Z_2(G))$ and $C_U(Z_2(U))\subseteq C_G(Z_2(G))$.\\

\noindent Conversely, if $g\in C_G(Z_2(G))\cap U$ then $(g,Z_2(G))=1$, so since $Z_2(U)\subseteq Z_2(G)$, $(g,Z_2(U))=1$ and $g\in C_U(Z_2(U))$ as required.\end{proof}

\begin{theorem}\label{J-control}

Let $G$ be a nilpotent, $p$-valuable group, let $P$ be a faithful prime $J$-ideal of $KG$. Then $P$ is controlled by $C_G(Z_2(G))$.

\end{theorem}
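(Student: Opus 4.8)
The plan is to prove this by induction on the nilpotency class $c$ of $G$, using Theorem \ref{B} as the engine and Theorem \ref{split} to reduce from prime $J$-ideals to virtually non-splitting prime $J$-ideals. If $c \leq 2$ then $Z_2(G) = G$, so $C_G(Z_2(G)) = Z(G)$, and the statement is the assertion that faithful prime $J$-ideals are standard; but in fact even the general case will funnel through applying Theorem \ref{B} repeatedly to shrink the controller subgroup. The strategy in the inductive step is: given a faithful prime $J$-ideal $P$ with controller subgroup $P^\chi$, suppose for contradiction that $P^\chi \not\subseteq C_G(Z_2(G))$. By Theorem \ref{split} it suffices to handle the virtually non-splitting case, so after passing to an open subgroup $U$ controlling $P$ we may assume $P \cap KU$ is prime and non-splitting, and by Proposition \ref{p-01} we may even assume $U = P^\chi$, i.e. that $P$ is \emph{not} controlled by any proper open subgroup of $G$.

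The key is then to manufacture a nontrivial automorphism $\varphi \in Aut^\omega(G)$ fixing $P$, trivial mod centre, with $\varphi(g)g^{-1} \in A$ for a suitable closed central subgroup $A$ on which $\mathcal{O}A/(\mathcal{O}A \cap P)$ is finitely generated over $\mathcal{O}$ — and then Theorem \ref{B} forces $P$ to be controlled by a proper open subgroup, the desired contradiction. To produce such a $\varphi$: since $P^\chi \not\subseteq C_G(Z_2(G))$, there is some $x \in P^\chi$ and some $z \in Z_2(G)$ with $(x,z) \neq 1$; since $z \in Z_2(G)$, the commutator $(g,z)$ lies in $Z(G)$ for all $g$, so conjugation by $z$ (or a suitable $p$-power of it, to land in $Aut^\omega$) is an inner automorphism that is trivial mod centre and has $\varphi(g)g^{-1} = (g,z^{\pm}) \in Z(G)$. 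Since this inner automorphism is induced by an element of $G$, and $P$ is a two-sided ideal, $\varphi(P) = P$ automatically. The subgroup $A$ should be taken to be the (isolated closure of the) subgroup of $Z(G)$ generated by the values $(g,z)$; one needs $\mathcal{O}A/(\mathcal{O}A\cap P)$ finitely generated over $\mathcal{O}$, which is where the $J$-ideal hypothesis enters — $KZ(G)/(KZ(G)\cap P)$ being finite-dimensional over $K$ gives finite generation of $\mathcal{O}A/(\mathcal{O}A\cap P)$ over $\mathcal{O}$ after rescaling, using that $A \leq Z(G)$ and $A$ is isolated, hence $p$-valuable and $\mathcal{O}A$ Noetherian.

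The main obstacle I expect is the bookkeeping around ensuring the automorphism $\varphi$ actually witnesses a \emph{strict} shrinking of the controller subgroup in a way that makes the induction close up — that is, translating "$P$ is controlled by a proper open subgroup of $G$" from Theorem \ref{B} into "$P^\chi \subseteq C_G(Z_2(G))$". One cannot directly conclude the latter; instead one argues by contradiction as above, choosing $U = P^\chi$ at the outset so that \emph{any} proper open controlling subgroup is impossible, which immediately contradicts the conclusion of Theorem \ref{B}. A secondary subtlety is that Theorem \ref{B} is stated over $\mathcal{O}G$ for prime ideals $P$ with $\pi \notin P$, so one must pass between $KG$ and $\mathcal{O}G$: given the prime $J$-ideal of $KG$, intersect with $\mathcal{O}G$ to get a prime ideal not containing $\pi$, check the $C(G,\mathcal{O})$-control statements transfer (which is exactly the content of Proposition \ref{loc-constant} together with Lemma \ref{important}), and verify the finiteness condition $\mathcal{O}A/(\mathcal{O}A\cap P)$ finitely generated over $\mathcal{O}$ descends correctly. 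Finally one must confirm the chosen $z$ can be taken with $(g,z)$ spanning a subgroup that is genuinely nontrivial mod $P$ — this uses faithfulness of $P$ exactly as in the proof of \cite[Lemma 7.5]{nilpotent} cited earlier, guaranteeing $\varphi \neq 1$ as an automorphism of $KG/P$, which is what Theorem \ref{B} really needs.
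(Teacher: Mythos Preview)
Your approach is essentially the paper's: reduce via Theorem \ref{split} to the virtually non-splitting case, pass to the controller subgroup $H=P^{\chi}$ where the intersection $Q$ is prime and not controlled by any proper open subgroup, then for $z\in Z_2(G)$ use conjugation by $z$ as the automorphism $\varphi$ in Theorem \ref{B}, with the $J$-ideal hypothesis supplying the finiteness of $\mathcal{O}A/(\mathcal{O}A\cap Q)$. The contradiction you describe is exactly the one the paper obtains. Two remarks, one cosmetic and one substantive:

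First, the induction on nilpotency class is a red herring. Neither your argument nor the paper's actually inducts on $c$; the nilpotency is used only to guarantee that $Z_2(G)$ is large enough that $C_G(Z_2(G))$ is a genuine constraint, and the proof runs once, not recursively.

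Second, there is a real gap in your reduction step. After invoking Theorem \ref{split} you are handling a \emph{right} ideal $I=P'KG$ with $P'$ a non-splitting prime of $KU$; only $P'$ is two-sided, and only in $KU$. Your line ``since \dots $P$ is a two-sided ideal, $\varphi(P)=P$ automatically'' then fails: conjugation by $z\in Z_2(G)$ need not preserve $P'$ unless $z\in U$, and need not preserve $H=(P')^{\chi}$ unless $H$ is normal in $G$ (it is only normal in $U$). The paper sidesteps this by first proving the non-splitting case for an \emph{arbitrary} $p$-valuable nilpotent group, then applying that result with $G$ replaced by $U$ (so conjugating by elements of $Z_2(U)$, which do lie in $U$ and do fix $P'$), and finally invoking Lemma \ref{centraliser} to identify $C_U(Z_2(U))=C_G(Z_2(G))\cap U$. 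You omit this lemma entirely. Your ``suitable $p$-power of $z$'' could in fact be made to repair the gap---since $z^{p^n}\in U$ for $n\gg 0$ and $(x,z^{p^n})\neq 1$ persists in a $p$-valued group---but the reason you give for taking the $p$-power (landing in $Aut^{\omega}$, which is automatic for conjugation anyway) is not the reason it is needed.
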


\begin{proof}

First, suppose that $P$ is non-splitting, and let $H:=P^{\chi}$ be the controller subgroup of $P$:\\

\noindent Then $Q:=P\cap KH$ is a faithful, prime ideal of $KH$ by Proposition \ref{p-01}, and since $H$ is the smallest subgroup of $G$ controlling $P$, $Q$ is not controlled by any proper subgroup of $H$.

Also, note that $H$ is a normal subgroup of $G$ by the proof of \cite[Lemma 5.2]{nilpotent}, so for any $g\in G$, $(g,H)\subseteq H$, and clearly $A:=Z(G)\cap H\subseteq Z(H)$ is the subgroup of $G$-invariants in $H$.\\

\noindent Since $P$ is a $J$-ideal of $KG$, $KZ(G)/KZ(G)\cap P$ is finite dimensional over $K$ by definition. And:

\begin{center}
$\frac{KA}{KA\cap Q}=\frac{KA}{KA\cap P}\subseteq \frac{KZ(G)}{KZ(G)\cap P}$.
\end{center}

\noindent Therefore, $KA/KA\cap Q$ is finite dimensional over $K$, and hence $\mathcal{O}A/\mathcal{O}A\cap Q$ is finitely generated over $\mathcal{O}$.\\

\noindent So, given $g\in Z_2(G)$, $(g,H)\subseteq Z(G)\cap H=A$, so let $\varphi$ be the automorphism of $H$ induced by conjugation by $g$, then $\varphi(Q)=Q$ and $\varphi(h)h^{-1}\in A$ for all $h\in H$. So applying Theorem \ref{B} gives that if $\varphi\neq 1$, then $Q$ is controlled by a proper subgroup of $H$ -- contradiction.

Therefore $\varphi=1$, i.e. $g$ centralises $H$.\\

\noindent Since our choice of $g$ was arbitrary, we have now proved that $Z_2(G)$ centralises $H$, and hence $H$ is contained in the centraliser $C_G(Z_2(G))$ of $Z_2(G)$ in $G$ as required. Thus $P$ is controlled by $C_G(Z_2(G))$.\\

\noindent Now suppose that $I\trianglelefteq_r KG$ is a faithful and virtually non-splitting right $J$-ideal of $KG$. Then $I=PKG$ for some open subgroup $U$ of $G$, and some faithful, non-splitting prime $P$ of $KU$, and $P$ is a $J$-ideal of $KU$ by Lemma \ref{virtual}.\\

\noindent We have proved that $P$ is controlled by $C_U(Z_2(U))$, and $C_U(Z_2(U))=C_G(Z_2(G))\cap U$ by Lemma \ref{centraliser}, and hence $I$ is controlled by $C_G(Z_2(G))$.

So, using Theorem \ref{split}, it follows that every faithful, prime $J$-ideal of $KG$ is controlled by $C_G(Z_2(G))$.\end{proof}

\noindent We can now prove our main control theorem:\\

\noindent\emph{Proof of Theorem \ref{A}.} Let $P$ be a faithful, primitive ideal of $KG$. Then using Proposition \ref{rational}, we see that $Z(KG/P)$ is finite dimensional over $K$.

Let $Z=Z(G)$, then since $KZ/P\cap KZ\subseteq Z(KG/P)$, it follows that $KZ/P\cap KZ$ is also finite dimensional over $K$, thus $P$ is a $J$-ideal of $KG$.\\

\noindent So since $P$ is a faithful, prime $J$-ideal of $KG$, it follows from Theorem \ref{J-control} that $P$ is controlled by $C_G(Z_2(G))$.\qed

\bibliographystyle{abbrv}

\end{document}